\documentclass[11pt,reqno]{amsart}
\setlength{\textheight}{24cm}
\setlength{\textwidth}{16cm}
\setlength{\topmargin}{-0.9cm}
\setlength{\parskip}{0.3\baselineskip}
\hoffset=-1.4cm
\usepackage{amssymb}
\usepackage{amscd,enumerate,amsfonts,calc,amsmath,verbatim,xypic}

\usepackage{amsmath, amssymb, amsfonts, amstext, amsthm, amscd, xypic}
\usepackage[mathscr]{eucal}
\usepackage{enumerate}
\usepackage{tikz,color,soul}
\usetikzlibrary{matrix,arrows}
\usepackage[utf8]{inputenc}
\usepackage[english]{babel}

\usepackage{mathrsfs}

\usepackage{verbatim}

\usepackage[new]{old-arrows}

\usepackage{amsmath,calligra,mathrsfs}
\DeclareMathOperator{\Hom}{\mathscr{H}\text{\kern -3pt {\calligra\large om}}\,} 


\usepackage{longtable}
\usepackage{hyperref}
\usepackage[all]{xy}
\usepackage{ulem}
\usepackage{xcolor}

\usetikzlibrary{arrows.meta, positioning,arrows}

\newtheorem{thm}{Theorem}[section]
\newtheorem{prop}[thm]{Proposition}
\newtheorem{lemma}[thm]{Lemma}
\newtheorem{cor}[thm]{Corollary}
\newtheorem{defn}[thm]{Definition}

\newtheorem{rmk}[thm]{Remark}

\numberwithin{equation}{section}

\def\rar{{\rightarrow}}

\newcommand{\R}{\mathbb R}
\newcommand{\Z}{\mathbb Z}

\def \mc{\mathcal}

\allowdisplaybreaks
\begin{document}
	
	\title{Logarithmic connections on principal bundles over normal varieties}

	\author{Jyoti Dasgupta}
	\address{School of Mathematics, Tata Institute of Fundamental Research, Mumbai, India}
	\email{jdasgupta.maths@gmail.com}
	
	\author{Bivas Khan}
	\address{Department of Mathematics, Indian Institute of Science Education and Research-Pune, Pune, India}
	\email{bivaskhan10@gmail.com}
	
	\author{Mainak Poddar}
	\address{Department of Mathematics, Indian Institute of Science Education and Research-Pune, Pune, India}
	\email{mainak@iiserpune.ac.in}
	
	\subjclass[2020]{14J60, 14M25, 32L05, 53C05.}
	
	\keywords{Logarithmic connection, principal bundle, vector bundle, residue, normal variety, toric variety}

	\begin{abstract}
		Let $X$ be a normal projective variety over an algebraically closed field of characteristic zero. Let $D$ be a reduced Weil divisor on $X$. Let $G$ be a reductive linear algebraic group. We introduce the notion of a logarithmic connection on a principal $G$-bundle over $X$, which is singular along $D$. The existence of a logarithmic connection on the frame bundle associated with a vector bundle over $X$ is shown to be equivalent to the existence of a logarithmic covariant derivative on the vector bundle if the logarithmic tangent sheaf of $X$ is locally free. Additionally, when the algebraic group $G$ is semisimple, we show that a principal $G$-bundle admits a 
  logarithmic connection if and only if the associated adjoint bundle
  admits one. We also prove that the existence of a logarithmic connection on a principal bundle over a toric variety, singular along the boundary divisor, is equivalent to the existence of a torus equivariant structure on the bundle. 
		
	\end{abstract}
	
	\maketitle
	
	\tableofcontents
	
	\section{Introduction}
	A connection on a principal bundle over a manifold generalizes the notion of a directional derivative, allowing us to compare nearby fibers. Connections always exist in the differential category, although we may have some obstruction for integrablity of the connection. The study of holomorphic connection for holomorphic principal bundles was initiated by Atiyah \cite{At}. He defined the notion of a holomorphic connection on a principal bundle as a splitting of a certain short exact sequence of vector bundles, known as the Atiyah sequence. However, the existence of a connection in the holomorphic category is not guaranteed. In fact, by \cite[Theorem 4]{At}, if a holomorphic principal \(G\)-bundle on a complex K\"ahler manifold admits a holomorphic connection where \(G\) is semisimple or the general linear group, then its characteristic cohomology ring must vanish.  This puts a severe restriction on the class of holomorphic principal bundles or vector bundles, which admit a holomorphic connection. Therefore, to have the notion of connection for a larger class of interesting holomorphic principal bundles, the holomorphicity condition of a connection is usually relaxed and  the connection is allowed to have poles along certain divisors. In particular, one can ask if there exists a logarithmic connection on a principal bundle, i.e., a connection with poles of at most order one along a given divisor. The study of logarithmic connections on holomorphic vector bundles was initiated by Deligne \cite{deligne} and leads toward the Riemann-Hilbert correspondence. Logarithmic connections also have applications to vanishing theorems \cite{EV1, EV2}. Recently, the work of Biswas et al. \cite{BMS, BDP, BKN, BP} has revealed a link between logarithmic connections on holomorphic principal bundles and group actions.


	
	To what extent does the theory of logarithmic connections on principal bundles extend to singular algebraic varieties?  
	In this paper, we take a step towards addressing this question by studying logarithmic connections singular along a reduced Weil divisor on a principal bundle over a normal algebraic variety. We fix the base field $\mathbb{K}$ to be an algebraically closed field of characteristic zero. To be precise, let \(G\) be a reductive algebraic group and $X$ be a normal projective variety together with a reduced Weil divisor $D$.  We extend the notion of the Atiyah sequence to  algebraic principal \(G\)-bundles over $X$ (see Proposition \ref{atseq}).  In contrast to the holomorphic situation, this is only a short exact sequence of torsion-free sheaves.
 As a corollary of Proposition \ref{atseq}, we also obtain a logarithmic Atiyah sequence.  This enables us to define the notion of a logarithmic connection singular along $D$ in this setup. 
 
 The logarithmic Atiyah sheaf is locally free if the logarithmic tangent sheaf is so (for instance, in the case of toric varieties and log smooth pairs $(X, D)$).  In Section \ref{residue sec}, assuming that the logarithmic tangent sheaf is locally free, we show that the existence of a logarithmic connection on the frame bundle associated to a vector bundle is equivalent to the existence of a logarithmic covariant derivative on the vector bundle (see Theorem \ref{a=b}). Moreover, a logarithmic connection on the frame bundle induces a natural logarithmic connection on the vector bundle (see Proposition \ref{equivalence1}). More generally, we also study the existence of logarithmic connection on the associated bundle of a principal bundle corresponding to a homomorphism of the structure group (see Section \ref{sec 4.1}). These were studied in the holomorphic case by  Gurjar and Paul \cite{GP}. Under the additional assumption that \(G\) is semisimple, we show that the existence of a logarithmic connection on the principal bundle $\mathcal{P}$ is equivalent to the existence of a logarithmic connection on the associated adjoint bundle \(\text{ad}(\mathcal{P})\) (see Corollary \ref{adbdlstab}).


	Toric varieties serve as a good testing ground for many theories in algebraic geometry. In Section \ref{toric situation}, we focus on studying logarithmic connections on principal bundles over toric varieties. Let \(X\) be a projective  toric variety under the action of the algebraic torus \(T\). Let \(O\) be the dense open subset of \(X\) on which \(T\) acts freely. Then \(D:= X \setminus O\) is the boundary divisor.  When the toric variety is smooth and the base field is complex numbers, in \cite{BMS}, the authors proved that a holomorphic vector bundle over \(X\) admits a logarithmic connection singular over \(D\) if and only if the vector bundle admits a $T$-equivariant structure. For a connected reductive group \(G\), this result was generalized to the case of holomorphic principal \(G\)-bundles over \(X\) in \cite{BDP}. In Subsections \ref{log conn toric} and \ref{eqslc}, we extend this result to singular toric varieties defined over $\mathbb{K}$: A  principal \(G\)-bundle \(\mathcal{P}\) over a projective toric variety \(X\)  admits a logarithmic connection singular over the boundary divisor \(D\) if and only if the bundle is $T$-equivariant (see Theorem \ref{logconn2equi}). 
	
	    In the case of toric varieties, the  logarithmic tangent sheaf is  free. 
		Using this, we explicitly describe the residues of a logarithmic connection on a vector bundle over a normal toric variety in Subsection \ref{ssc: residue}. Consider the logarithmic connection induced from the torus action on an equivariant vector bundle over a normal toric variety \(X\). Following \cite{BMS}, we explain how its logarithmic residues have information on the equivariant structure. We also point out a relation of these residues with the equivariant Chern class \cite{payne} of the vector bundle.


 There is a generalization of logarithmic differentials for an arbitrary scheme, given by the notion of \textit{log structures} (see \cite{Illusie}, \cite{K.Kato}). In fact, there is a notion of logarithmic connection and its residue on vector bundles over fine log schemes (see \cite{Mau}). When \(X\) is a smooth projective variety and \(D\) is a normal crossing divisor on \(X\), then we have a natural log smooth, in particular, a fine log structure on \(X\). However, if the underlying variety \(X\) is a normal projective variety and we consider a reduced Weil divisor \(D\) on \(X\), then we may not have a fine log structure. 
	
	\subsection*{Acknowledgments} The authors thank Indranil Biswas and Vikraman Balaji for stimulating and helpful conversations. They also thank Henri Guenancia for explaining a technical point to them.  Special thanks go to Barabara Fantechi for her interest in the work and for detailed discussions that led to significant improvements in the manuscript.  The first and second named authors are partly supported by postdoctoral fellowships from the National Board for Higher Mathematics, Department of Atomic Energy, India. The second author is also partially supported by EPSRC grant EP/R02300X/1. The research of the third author was supported by a SERB MATRICS Grant: MTR/2019/001613.


	\section{Preliminaries}

	In this section, we briefly review some basic facts about logarithmic tangent sheaves and toric varieties. Henceforth, we denote the base field by $\mathbb{K}$, assumed to be algebraically closed of characteristic zero. Unless otherwise mentioned, our varieties are normal projective varieties over $\mathbb{K}$.

	\subsection{Logarithmic tangent sheaves}
	
	Let \(X\) be a smooth projective algebraic variety and \(D\) be a simple normal crossing divisor. Then the pair \((X, D)\) gives rise to a natural sheaf \(\Omega_X^1(\text{log }D)\) of differential 1-forms on \(X\) with logarithmic poles on \(D\). Originally, this sheaf was introduced by Deligne \cite{del2} to define a mixed Hodge structure on the open variety \(X \setminus D\). In this case, the logarithmic cotangent sheaf is locally free and so is its dual, the logarithmic tangent sheaf. Later, Saito extended these notions for any divisor on a smooth variety and discussed many useful properties in his fundamental paper \cite{saito}. Now, more generally, let us consider a normal projective variety \(X\) and a reduced Weil divisor \(D\)  on \(X\). Define the logarithmic tangent sheaf of \((X, D)\), denoted as \(\mathscr{T}_X(- \text{log } D)\) to be the sheafification of the module of derivations preserving \(D\) (see \cite[Section 3.1]{gue} for more details). This is a subsheaf of the tangent sheaf \(\mathscr{T}_X\), defined as the sheafification of the module of derivations. We no longer have the local freeness of the logarithmic tangent sheaf, although it is still a reflexive sheaf since the underlying variety is normal. Its dual is called the logarithmic cotangent sheaf, denoted by \(\Omega_X^1(\text{log }D):= \Hom_{\mathcal{O}_X}(\mathscr{T}_X(- \text{log } D), \mathcal{O}_X) \).

	\subsection{Toric varieties}
	We recall some basic facts about logarithmic (co)tangent sheaf on toric varieties. We assume toric varieties are normal following standard conventions (see \cite{Cox} and \cite{Oda}).	Let $T \cong \left(\mathbb{K}^*\right) ^n$ be an algebraic torus. Let $M=\text{Hom}(T, \mathbb{K}^*) \cong \Z^n$ be its character lattice, and let $N=\text{Hom}_{\Z}(M, \Z)$ be the dual lattice of one parameter subgroups. Let \(\langle \, ,  \rangle \) denote the canonical bilinear perfect pairing \(\langle \, ,  \rangle: M \times N \rightarrow \Z\). Let $\Sigma$ be a fan in $N_{\R}:=N \otimes_{\Z} \R$ which defines a toric variety $X=X(\Sigma)$ under the action of the torus $T$. Let $\Sigma(1)$ denote the rays of $\Sigma$ and $\sigma(1)$ denote the rays of a cone $\sigma$ in $\Sigma$. For a cone \(\sigma \in \Sigma\), denote the corresponding affine variety by \(U_{\sigma}\). The affine toric variety \(U_{\sigma}\) has a unique distinguished point, denoted by \(x_{\sigma}\). The orbit containing the distinguished point \(x_{\sigma}\) under the action of the torus \(T\) is denoted by \(O_{\sigma}\). Note that $O_{\sigma} = \text{Spec }\mathbb{K}[\sigma^{\perp} \cap M] $. When \(\sigma \in \Sigma(n)\), we have  \(O_{\sigma} = \{ x_{\sigma} \} \). For the trivial cone $\{0\}$, we have the principal orbit \(O:= O_{\{0\}} =U_{\{0\}}\). Moreover, there is a canonical identification of $O$ with $T$ as both are defined as $\text{Spec }\mathbb{K}[M] $ (cf. \cite[Proposition 1.6]{Oda}). We denote the closed point of $O$ corresponding to the identity element $1_T$ of $T$ by $x_0$. Each ray $\rho \in \Sigma(1)$ corresponds to a \(T\)-invariant prime divisor, denoted by \(D_{\rho}\). 
	
	Let \(X\) be a projective toric variety and \(D=X \setminus T\) be the boundary divisor. Note that \(D=\sum\limits_{\rho \in \Sigma(1)} D_{\rho}\) is an effective \(T\)-invariant divisor. Let \(\mathscr{T}_X(- \text{log } D) \subset \mathscr{T}_X\)  be the corresponding logarithmic tangent sheaf. Let \(\mathfrak{t}\) be the Lie algebra of the torus, which is also identified with the vector space of left-invariant derivations of \(\mathbb{K}[M]\). Note that we have a canonical identification 
	\begin{equation*}
		\mathbb{K} \otimes_{\Z} N \stackrel{\cong} \longrightarrow \mathfrak{t} \text{ given by } v \mapsto \delta_v,
	\end{equation*}
	where for any \(v \in N\),
	\begin{equation}\label{LID on X}
		\delta_v: \mathbb{K}[M] \rar \, \mathbb{K}[M] \text{ defined by } \chi^m  \mapsto \langle m,v \rangle \chi^m  
	\end{equation}
	is a left-invariant derivation. Consider the sheaf \(\mathcal{V}:= \mathcal{O}_X \otimes_{\mathbb{K}} \mathfrak{t}\) on \(X\). Note that, \(\text{H}^0(X, \mathcal{V}) \cong \mathfrak{t}\) and hence \(\text{H}^0(X, \mathcal{V})\) has a natural Lie algebra structure. From the torus action on \(X\), we have \(\mathfrak{t} \subset \text{H}^0(X, \mathscr{T}_X)\). Indeed, it is easy to see that \(\delta_v\), defined in \eqref{LID on X}, preserves \(\mathbb{K}[S_{\sigma}]\) for each \(\sigma \in \Sigma\) and hence defines an element of \(\text{H}^0(X, \mathscr{T}_X)\). Thus we have an \(\mathcal{O}_X\)-module homomorphism of sheaves
	\begin{equation}\label{trivial}
		\beta: \mathcal{V} \, \rar \, \mathscr{T}_X.
	\end{equation}
	
	\begin{prop}[{\cite[Proposition 3.1]{Oda}}]\label{odat}
		The homomorphism $\beta$ is an isomorphism onto \(\mathscr{T}_X(- \text{log } D) \subset  \mathscr{T}_X\), i.e. it induces an isomorphism
		\begin{equation}\label{trivial2}
			\bar{\beta}:	\mathcal{V} \stackrel{\cong}\longrightarrow \mathscr{T}_X(- \text{log } D).
		\end{equation}
	\end{prop}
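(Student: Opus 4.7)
\textbf{Proof proposal for Proposition \ref{odat}.} The statement is local on $X$, so it suffices to prove that for each cone $\sigma \in \Sigma$ the map $\beta|_{U_\sigma}$ is an isomorphism from $\mathcal{O}_{U_\sigma}\otimes_{\mathbb{K}} \mathfrak{t}$ onto $\mathscr{T}_X(-\log D)|_{U_\sigma}$. Set $R := \mathbb{K}[S_\sigma]$ so that $U_\sigma = \mathrm{Spec}\, R$, and recall that for each ray $\rho \in \sigma(1)$ the prime ideal of $D_\rho \cap U_\sigma$ is $P_\rho = \bigoplus_{m \in S_\sigma,\ \langle m, v_\rho\rangle > 0} \mathbb{K}\,\chi^m$. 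A derivation $\delta \in \mathrm{Der}_{\mathbb{K}}(R,R)$ is logarithmic along $D$ precisely when $\delta(P_\rho) \subseteq P_\rho$ for every $\rho \in \sigma(1)$.

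First I would show that the image of $\beta|_{U_\sigma}$ is contained in $\mathscr{T}_X(-\log D)(U_\sigma)$. For any $v \in N_{\mathbb{K}}$ and $f \in R$, the derivation $f\,\delta_v$ acts on monomials by $(f\,\delta_v)(\chi^m) = \langle m, v\rangle\, f \chi^m$, which is a multiple of $\chi^m$ itself; hence the operator $f\delta_v$ preserves every monomial ideal, and in particular each $P_\rho$. Next, injectivity of $\beta|_{U_\sigma}$ is immediate: fixing a $\mathbb{Z}$-basis $v_1,\ldots,v_n$ of $N$ gives an $R$-linear isomorphism $R^n \xrightarrow{\sim} \mathcal{O}_{U_\sigma}\otimes \mathfrak{t}$, and the $\delta_{v_i}$ remain $\mathcal{O}_T$-linearly independent on the principal open $T \subset U_\sigma$ (there they form the standard frame of $\mathscr{T}_T = \mathcal{O}_T \otimes \mathfrak{t}$), so any relation $\sum f_i \delta_{v_i} = 0$ in $\mathscr{T}_X$ forces all $f_i$ to vanish.

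The main content is surjectivity onto logarithmic derivations. Given $\delta \in \mathscr{T}_X(-\log D)(U_\sigma)$, its restriction to $T$ can be uniquely written as $\delta|_T = \sum_{i=1}^n f_i\,\delta_{v_i}$ with $f_i \in \mathbb{K}[M]$; the task is to prove $f_i \in \mathbb{K}[S_\sigma]$. Expand $f_i = \sum_{m' \in M} c_{i,m'} \chi^{m'}$ and set $v^{(m')} := \sum_i c_{i,m'}v_i \in N_{\mathbb{K}}$. Then
\[
\delta(\chi^m) \;=\; \sum_{m' \in M} \langle m, v^{(m')}\rangle\, \chi^{m+m'}
\]
exhibits the weight decomposition of $\delta$ with respect to the $M$-grading on $R$. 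I want to show that $v^{(m')} = 0$ whenever $m' \notin S_\sigma$. Suppose to the contrary that $v^{(m')} \neq 0$ for some $m' \notin S_\sigma$; choose $\rho \in \sigma(1)$ with $\langle m', v_\rho\rangle < 0$. Regularity of $\delta$ on $U_\sigma$ forces $\langle m, v^{(m')}\rangle = 0$ for every $m \in S_\sigma$ with $m + m' \notin S_\sigma$; the logarithmic condition applied to $P_\rho$ similarly forces this pairing to vanish for every $m \in S_\sigma$ with $\langle m+m', v_\rho\rangle \leq 0$. Combined, this gives $\langle m, v^{(m')}\rangle = 0$ for all $m$ in the sub-semigroup $S_\sigma \cap \{\langle \cdot, v_\rho\rangle = 0\}$, whose $\mathbb{Q}$-span is the hyperplane $v_\rho^{\perp} \subset M_\mathbb{Q}$; by a short argument using an additional lattice point of $S_\sigma$ with $\langle m, v_\rho\rangle > 0$ and the log condition one also gets vanishing transverse to this hyperplane, yielding $v^{(m')} = 0$, a contradiction. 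Hence every $f_i$ lies in $R$, which proves $\delta$ is in the image of $\beta|_{U_\sigma}$ and completes the proof.

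The main obstacle is the last step: squeezing both regularity and the logarithmic condition simultaneously to rule out weight components $m' \notin S_\sigma$. The bookkeeping here is combinatorial and requires careful use of strong convexity of $\sigma$ together with the fact that $v_\rho^{\perp} \cap S_\sigma$ generates $v_\rho^{\perp} \cap M$ as a group, so that vanishing of the linear functional $\langle \cdot, v^{(m')}\rangle$ on this set really implies $v^{(m')} \perp v_\rho^{\perp}$, hence $v^{(m')} \in \mathbb{K}\,v_\rho$; and then testing against a second ray or against lattice points with $\langle m, v_\rho\rangle > 0$ forces $v^{(m')} = 0$.
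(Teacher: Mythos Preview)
The paper does not prove this proposition; it is simply quoted from Oda's book \cite[Proposition 3.1]{Oda}, so there is no in-paper argument to compare against. Your direct proof---localising to affine charts $U_\sigma$, characterising logarithmic derivations by $\delta(P_\rho)\subseteq P_\rho$, and analysing the $M$-graded pieces of $\mathrm{Der}_{\mathbb{K}}(R)$---is the standard one and is correct in outline.

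The only place that merits more care is the final step. Once you have shown $v^{(m')}\in\mathbb{K}\,v_\rho$ (which, as you note, follows already from \emph{regularity} on $S_\sigma\cap v_\rho^{\perp}$, since $\langle m+m',v_\rho\rangle<0$ there), you need a lattice point $m\in S_\sigma$ with $0<\langle m,v_\rho\rangle\le -\langle m',v_\rho\rangle$ to kill the remaining scalar. When $\langle m',v_\rho\rangle=-1$ this forces $\langle m+m',v_\rho\rangle=0$, and here the logarithmic condition is genuinely needed (regularity alone is not enough, since $m+m'$ may lie in $S_\sigma$). The existence of such an $m$ follows from primitivity of $v_\rho$: pick $u\in M$ with $\langle u,v_\rho\rangle=1$ and $m_0$ a lattice point in the relative interior of the facet $\sigma^{\vee}\cap v_\rho^{\perp}$; then $u+km_0\in S_\sigma$ for $k\gg 0$ and pairs to $1$ with $v_\rho$. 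With this point in hand your argument is complete. Your ``second ray'' alternative, by contrast, is not always available (e.g.\ when $\sigma$ is itself a single ray).
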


	Using the above isomorphism, we can pull back the natural Lie algebra structure of \(\mathscr{T}_X(- \text{log } D)\) on the sheaf $\mathcal{V}$ and 
	regard \(
	\bar{\beta} \text{ as a Lie algebra homomorphism.}\) Explicitly, the Lie algebra structure on $\mathcal{V}$ is given in the following way. Let \(U \) be an open subset of \(X\), \(f_1, f_2 \in \mathcal{O}_X(U)\) and \(v_1, v_2 \in N\). Then 
	\begin{equation}\label{Lie_str}
		[f_1 \otimes v_1, f_2 \otimes v_2]:=f_1 \, \delta_{v_1}(f_2) \otimes v_2 - f_2 \, \delta_{v_2}(f_1) \otimes v_1.
	\end{equation} 
	Also, we have \( \Omega_X^1(\text{log }D) \cong \mathcal{O}_{X} \otimes_{\Z} M\) using Proposition \ref{odat}.

	\section{Logarithmic connections on principal bundles}\label{log conn sec}
	
	In this section, we extend the definition of logarithmic connection on a principal bundle over an arbitrary normal projective variety. For ease of notation, we denote \(\text{Der}(R, R)\) by \(\text{Der}(R)\), for any ring \(R\). 
	
	
	Let \(X\) be a normal variety and $G$ be a   linear algebraic group. Consider an algebraic principal $G$-bundle \(p : \mathcal{P} \rightarrow X\) over $X$. The \(G\)-action on \(\mathcal{P}\) induces a \(G\)-action on \(p_*\mathscr{T}_{\mathcal{P}}\) as follows. Let \(U\) be an open subset of \(X\). Then \(G\) acts from the right on \((p_*\mathscr{T}_{\mathcal{P}})(U)=\text{Der}_{\mathbb{K}}(\mc{O}_{\mathcal{P}}(p^{-1}(U)))\) by
	\begin{equation*}
		(\tilde{\delta} \cdot g)(\tilde{f} )=\tilde{\delta}(\tilde{f}  \cdot g^{-1} ) \cdot g,
	\end{equation*}
	where \(\tilde{\delta} \in \text{Der}_{\mathbb{K}}(\mc{O}_{\mathcal{P}}(p^{-1}(U))), \, \tilde{f} \in \mc{O}_{\mathcal{P}}(p^{-1}(U)), \, \text{and } g \in G\) and \(\tilde{f}  \cdot g^{-1}\) is defined by
	\begin{equation*}
		(\tilde{f}  \cdot g^{-1})(e)=\tilde{f} (e g),
	\end{equation*}
	for all \(e \in \mathcal{P}\). 
	
	\begin{defn}
		The Atiyah sheaf associated to the principal bundle is the subsheaf of \(G\)-invariants in $p_*\mathscr{T}_{\mathcal{P}}$:
		\begin{equation*}
			\begin{split}
				\mathcal{A}t(\mathcal{P}):=(p_*\mathscr{T}_{\mathcal{P}})^G \subset p_*\mathscr{T}_{\mathcal{P}}.
			\end{split}
		\end{equation*}
	\end{defn}
\begin{rmk}\rm {
Assume that \(G\) is a reductive linear algebraic group. Then, using \cite[Theorem 1.2]{Nevins}, the Atiyah sheaf $\mathcal{A}t(\mathcal{P})$ is coherent as \(p: \mathcal{P} \rightarrow X \) is a principal \(G\)-bundle. Moreover, since \(p\) is surjective, using \cite[Proposition 8.4.5]{ega1}, we get that $\mathcal{A}t(\mathcal{P})$ is torsion free.

}
	
\end{rmk}

	We now define a map \(\eta : \mathcal{A}t(\mathcal{P}) \rightarrow \mathscr{T}_X\). Let \(U\) be an open subset of \(X\) and take \(\tilde{\delta} \in \text{Der}_{\mathbb{K}}(\mc{O}_{\mathcal{P}}(p^{-1}(U)))^G\). For the projection \(p : \mathcal{P} \rightarrow X\), let \(p^{\sharp}\) denote the map of sheaves of \(\mathcal{O}_X\)-modules, 
	\begin{equation}\label{p-sharp}
		p^{\sharp}: \mathcal{O}_X \rightarrow p_*(\mathcal{O}_{\mathcal{P}}).
	\end{equation}
	Since \(p: \mathcal{P} \, \rar \, X\) is a categorical quotient, we have \(\mc{O}_{\mathcal{P}}(p^{-1}(U))^G \cong \mathcal{O}_{X}(U)\). For any \(f \in \mc{O}_X(U)\), there exists a unique element \(p_*(\tilde{\delta}(p^{\sharp}(f))) \in \mathcal{O}_{X}(U)\)  such that
	\begin{equation}\label{log_con1}
		\tilde{\delta}(p^{\sharp}(f))= p_*(\tilde{\delta}(p^{\sharp}(f))) \circ p.
	\end{equation}
	Thus, we get a map 
	\begin{equation*}
		p_*^U: \mathcal{O}_X(U) \rightarrow \mathcal{O}_X(U) \text{ given by } f \mapsto p_*(\tilde{\delta}(p^{\sharp}(f))).
	\end{equation*}
	Note that, due to uniqueness of the element \( p_*(\delta(p^{\sharp}(f)))\), the map \(p_*^U\) is additive and it satisfies the following relation
	\begin{equation}\label{p_* Leib}
		p_*^U(f_1 \, f_2)=f_1 \, p_*^U(f_2) + f_2 \, p_*^U(f_1),	
	\end{equation} 
	for \(f_1, \, f_2 \in \mathcal{O}_X(U)\). Let us define 
	\begin{equation}\label{surj}
		\begin{split}
			\eta_U: \text{Der}_{\mathbb{K}}(\mc{O}_{\mathcal{P}}(p^{-1}(U)))^G & \rightarrow \, \text{Der}_{\mathbb{K}}(\mc{O}_X(U)), 
		\end{split}
	\end{equation}
	by the following rule. For \(\tilde{\delta} \in \text{Der}_{\mathbb{K}}(\mc{O}_{\mathcal{P}}(p^{-1}(U)))^G\), and \(f \in \mc{O}_X(U)\), let \[\eta_U(\tilde{\delta})(f)=p_*^U(f).\] Since $\tilde{\delta}$ is a derivation and the map \(p_*^U\) is additive and satisfies \eqref{p_* Leib}, $\eta_U(\tilde{\delta})$ is indeed a derivation. 
	
	\begin{lemma}\label{preatseq}
		Let \(p : \mathcal{P} \rightarrow X\) be a principal \(G\)-bundle on \(X\) which is locally trivial for the Zariski topology. Then we have the following short exact sequence of sheaves
		\begin{equation}\label{preatseqeq}
			0 \longrightarrow (p_*\mathscr{T}_{\mathcal{P} / X})^G \longrightarrow \mathcal{A}t(\mathcal{P})  \stackrel{\eta} \longrightarrow  \mathscr{T}_X \longrightarrow 0.
		\end{equation}
	\end{lemma}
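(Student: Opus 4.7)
The strategy is to verify exactness Zariski-locally on $X$. Since exactness of a sequence of sheaves can be checked on an open cover and $p \colon \mathcal{P} \to X$ is Zariski locally trivial by hypothesis, it suffices to establish the asserted exactness after restricting to any trivializing affine open $U = \text{Spec}(A) \subseteq X$, on which $\mathcal{P}|_U \cong U \times G$ with $G$ acting by right translation on the second factor.

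On such a trivialization, identifying $p$ with the projection $U \times G \to U$, I would invoke the standard decomposition of derivations of a tensor product of $\mathbb{K}$-algebras,
\[
\text{Der}_{\mathbb{K}}(A \otimes_{\mathbb{K}} \mathbb{K}[G]) \;\cong\; \bigl(\text{Der}_{\mathbb{K}}(A) \otimes_{\mathbb{K}} \mathbb{K}[G]\bigr) \;\oplus\; \bigl(A \otimes_{\mathbb{K}} \text{Der}_{\mathbb{K}}(\mathbb{K}[G])\bigr),
\]
and take $G$-invariants. Using $\mathbb{K}[G]^G = \mathbb{K}$ and the identification $\text{Der}_{\mathbb{K}}(\mathbb{K}[G])^G \cong \mathfrak{g}$ with the Lie algebra of left-invariant vector fields on $G$, this yields
\[
\mathcal{A}t(\mathcal{P})(U) \;\cong\; \text{Der}_{\mathbb{K}}(A) \;\oplus\; (A \otimes_{\mathbb{K}} \mathfrak{g}).
\]
A direct computation from the definition in \eqref{surj} shows that $\eta_U$ is the projection onto the first summand: since $p^{\sharp}(A) = A \otimes 1$, an element $a \otimes D \in A \otimes_{\mathbb{K}} \mathfrak{g}$ satisfies $(a \otimes D)(f \otimes 1) = af \otimes D(1) = 0$, while $(\delta \otimes 1)(f \otimes 1) = \delta(f) \otimes 1$ so $\eta_U(\delta \otimes 1) = \delta$. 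Hence $\eta_U$ is surjective with kernel $A \otimes_{\mathbb{K}} \mathfrak{g}$.

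To match this kernel with $(p_*\mathscr{T}_{\mathcal{P}/X})^G(U)$, I would observe that on the trivial patch $\mathscr{T}_{U \times G / U}$ corresponds, as an $A \otimes \mathbb{K}[G]$-module, to $A \otimes_{\mathbb{K}} \text{Der}_{\mathbb{K}}(\mathbb{K}[G])$; pushing forward to $U$ and taking $G$-invariants yields $A \otimes_{\mathbb{K}} \mathfrak{g}$, agreeing with the kernel of $\eta_U$ computed above. Exactness over $U$ follows, and since $\eta$ is defined intrinsically, independent of any choice of trivialization, the local identifications automatically patch together to give the asserted exact sequence globally on $X$.

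The main obstacle I foresee is the careful bookkeeping: verifying that the derivation decomposition is $G$-equivariant so that taking $G$-invariants distributes over the direct sum as claimed, and that the local identification of the kernel with $A \otimes \mathfrak{g}$ genuinely corresponds to the globally defined subsheaf $(p_*\mathscr{T}_{\mathcal{P}/X})^G$ under the transition maps of the trivializing cover. The Zariski local triviality hypothesis is essential here; without it one would need to pass to an \'etale cover and invoke faithfully flat descent, which is delicate when $X$ is merely normal rather than smooth.
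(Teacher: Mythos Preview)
Your proposal is correct and follows essentially the same strategy as the paper: restrict to a trivializing affine open, exploit the product structure $U \times G$, show $\eta_U$ is surjective, and identify its kernel. The execution differs only slightly. You invoke the full decomposition $\text{Der}_{\mathbb{K}}(A \otimes \mathbb{K}[G]) \cong (\text{Der}_{\mathbb{K}}(A) \otimes \mathbb{K}[G]) \oplus (A \otimes \text{Der}_{\mathbb{K}}(\mathbb{K}[G]))$ and take $G$-invariants to obtain a local splitting $\mathcal{A}t(\mathcal{P})(U) \cong \text{Der}_{\mathbb{K}}(A) \oplus (A \otimes \mathfrak{g})$, after which $\eta_U$ is visibly the first projection. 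The paper is more bare-handed: it simply lifts $\delta \in \text{Der}_{\mathbb{K}}(\mathcal{O}_X(U))$ to $\tilde\delta(f \otimes h) = \delta(f) \otimes h$ to get surjectivity, and characterizes the kernel intrinsically as those $G$-invariant derivations killing $p^{\sharp}(\mathcal{O}_X(U))$, i.e.\ as $\text{Der}_{\mathcal{O}_X(U)}(\mathcal{O}_{\mathcal{P}}(p^{-1}U))^G$, which is \emph{by definition} $(p_*\mathscr{T}_{\mathcal{P}/X})^G(U)$. Your route gives a little more (a local splitting, not just exactness) at the cost of the extra bookkeeping you yourself flag---checking $G$-equivariance of the tensor decomposition and then matching $A \otimes \mathfrak{g}$ with $(p_*\mathscr{T}_{\mathcal{P}/X})^G(U)$---whereas the paper's kernel identification is immediate from the definition of the relative tangent sheaf.
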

	
	\begin{proof}
		Let \(U \subset X\) be a trivializing open subset for the principal bundle $\mathcal{P}$, so that we have  \(\mc{O}_{\mathcal{P}}(p^{-1}(U)) \cong  \mc{O}_X(U) \otimes \, \mathcal{O}(G)\). Then for any \(\delta \in \text{Der}_{\mathbb{K}}(\mc{O}_X(U))\), let us define a derivation \(\tilde{\delta} \in \text{Der}_{\mathbb{K}}(\mc{O}_{\mathcal{P}}(p^{-1}(U)))^G\) by setting 
		\begin{equation*}
			\tilde{\delta}(f \otimes h) = \delta(f) \otimes h,
		\end{equation*}
		where \(f \in \mc{O}_X(U), \, h \in \mathcal{O}(G)\). Then by definition, \(\eta_U(\tilde{\delta})=\delta\). 
		Thus, \eqref{surj} defines a surjective map 
		\begin{equation}\label{the map eta}
			\eta: \mathcal{A}t(\mathcal{P}) \rightarrow \mathscr{T}_X.
		\end{equation}
		We show that \(\text{Ker}(\eta)=(p_*\mathscr{T}_{\mathcal{P} / X})^G\). Let \(U \) be an affine open subset of \(X\) and write \(A=\mathcal{O}_{\mc{P}}(p^{-1}(U))\) and \(B=\mathcal{O}_X(U)\). Let \(\tilde{\delta} \in \mathcal{A}t(\mathcal{P})(U)\), then we have
		\begin{equation*}
			\begin{split}
				&\tilde{\delta} \in \text{Ker}(\eta_U)  \Leftrightarrow \tilde{\delta} \text{ is \(G\)-invariant and } p_*(\tilde{\delta}(p^{\sharp}(f) )=0 \text{ for all } f \in \mathcal{O}_X(U)\\
				& \Leftrightarrow \tilde{\delta} \text{ is \(G\)-invariant and } \tilde{\delta}(p^{\sharp}(f) )=0 \text{ for all } f \in \mathcal{O}_X(U) ~ (\text{using \eqref{log_con1} and surjectivity of }p)\\
				& \Leftrightarrow \tilde{\delta} \in \text{Der}_{B}(A)^G=\text{Hom}_B(\Omega_{A/B}, A)^G=(p_*\mathscr{T}_{\mathcal{P} / X})(U)^G.
			\end{split}
		\end{equation*}	
		This completes the proof.
	\end{proof}

	\begin{rmk}\label{isotrivial}
		\rm{Let \(p : \mathcal{P} \rightarrow X\) be a principal \(G\)-bundle on \(X\). Since \(G\) is an affine algebraic group, the principal bundle $\mathcal{P}$ is \textit{locally isotrivial} by \cite[Remark 3.1]{Br}. In other words, for any point \(x \in X\) there exists a Zariski open subset \(V \subset X\) containing \(x\) and a finite étale surjective morphism \(f: U \rightarrow V\) such that the pull-back principal bundle \(f^*(\mathcal{P}|_{V})\) is trivial.}
	\end{rmk} 

In the following proposition, we show that the exact sequence \eqref{preatseqeq} exists for any principal bundle.
	
		\begin{prop}\label{atseq}
		Let \(G\) be a reductive linear algebraic group and \(p : \mathcal{P} \rightarrow X\) be a principal \(G\)-bundle on \(X\). Then we have the following short exact sequence of sheaves
	\begin{equation}\label{preatseqeq1.1}
		0 \longrightarrow (p_*\mathscr{T}_{\mathcal{P} / X})^G \longrightarrow \mathcal{A}t(\mathcal{P})  \stackrel{\eta} \longrightarrow  \mathscr{T}_X \longrightarrow 0.
	\end{equation}
\end{prop}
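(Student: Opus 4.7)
My plan is to reduce the proposition to Lemma \ref{preatseq} by étale descent. Observe first that the construction of $\eta$ in \eqref{surj} and the inclusion $(p_*\mathscr{T}_{\mathcal{P}/X})^G \hookrightarrow \mathcal{A}t(\mathcal{P})$ make no use of local triviality; they are already in place. Moreover, the computation in the proof of Lemma \ref{preatseq} that identifies $\operatorname{Ker}(\eta)$ with $(p_*\mathscr{T}_{\mathcal{P}/X})^G$ relies only on the surjectivity of $p$ together with relation \eqref{log_con1}, and so transcribes here verbatim. Thus, the entire content of the proposition reduces to proving that $\eta$ is surjective as a map of sheaves on $X$.

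Since surjectivity is local on $X$, I will pick a point $x \in X$ and invoke Remark \ref{isotrivial} to produce a Zariski open $V \ni x$ together with a finite étale surjection $f: U \to V$ such that $\widetilde{\mathcal{P}} := f^*(\mathcal{P}|_V)$ is a trivial principal $G$-bundle on $U$. Writing $\tilde{p}: \widetilde{\mathcal{P}} \to U$ and $\tilde{f}: \widetilde{\mathcal{P}} \to \mathcal{P}|_V$ for the base-change maps, étaleness of $f$ and $\tilde{f}$ gives the canonical identifications $f^*\mathscr{T}_V \cong \mathscr{T}_U$ and $\tilde{f}^*\mathscr{T}_{\mathcal{P}|_V} \cong \mathscr{T}_{\widetilde{\mathcal{P}}}$.

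The crucial step, and what I anticipate to be the main technical obstacle, is to show that the formation of the Atiyah sheaf commutes with this étale base change, i.e. that there is a natural isomorphism $f^*\mathcal{A}t(\mathcal{P}|_V) \cong \mathcal{A}t(\widetilde{\mathcal{P}})$ intertwining $f^*\eta$ with the Atiyah map of $\widetilde{\mathcal{P}}$. This combines two ingredients: flat base change, yielding $f^*(p_*\mathscr{T}_{\mathcal{P}|_V}) \cong \tilde{p}_*\mathscr{T}_{\widetilde{\mathcal{P}}}$ (valid because $p$ is affine and $f$ is flat), together with the fact that, for a reductive $G$ in characteristic zero, the Reynolds operator makes the functor $M \mapsto M^G$ exact and compatible with flat base change, so that $G$-invariants are preserved by $f^*$. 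Here the reductivity of $G$ and the characteristic-zero hypothesis are both essential, and verifying this compatibility carefully is the heart of the argument.

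With that identification in hand, Lemma \ref{preatseq} applied to the trivial bundle $\widetilde{\mathcal{P}} \to U$ yields the surjectivity of $f^*\eta: \mathcal{A}t(\widetilde{\mathcal{P}}) \to \mathscr{T}_U$. Since $f$ is faithfully flat, surjectivity of a map of quasi-coherent sheaves after pulling back via $f$ descends to surjectivity on $V$. As $x \in X$ was arbitrary, $\eta$ is surjective globally, which together with the preceding paragraph gives the desired short exact sequence.
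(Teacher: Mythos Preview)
Your proposal is correct and follows essentially the same route as the paper: reduce to Lemma~\ref{preatseq} via the étale local isotriviality of Remark~\ref{isotrivial}, after establishing that the formation of $\mathcal{A}t(\mathcal{P})$, $(p_*\mathscr{T}_{\mathcal{P}/X})^G$, and $\mathscr{T}_X$ commutes with étale base change (using flat base change and compatibility of $G$-invariants with flat pullback). The only cosmetic difference is that you observe the kernel identification already holds verbatim without any triviality assumption and therefore invoke descent only for the surjectivity of~$\eta$, whereas the paper pulls back the entire three-term sequence and checks exactness of all of it after base change; both arguments are equally valid.
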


\begin{proof}
	Note that the exactness of a sequence can be checked by pulling back using an étale cover. In view of Remark \ref{isotrivial}, there is an étale cover \(\{f_i : U_i \rightarrow V_i\}\) of \(X\) such that the pull-back principal bundle \(f_i^*(\mathcal{P}|_{V_i})\) is trivial. Hence, it suffices to show that for an étale map \(f : U \rightarrow X \), the following sequence is exact 
		\begin{equation*}
		0 \longrightarrow f^*(p_*\mathscr{T}_{\mathcal{P} / X})^G \longrightarrow f^*\mathcal{A}t(\mathcal{P})   \longrightarrow  f^*\mathscr{T}_X \longrightarrow 0.
	\end{equation*}
	We have the pull-back diagram
	\begin{equation*}
		\begin{tikzpicture}[description/.style={fill=white,inner sep=2pt}]
			\matrix (m) [matrix of math nodes, row sep=3em,
			column sep=2.5em, text height=1.5ex, text depth=0.25ex]
			{ Q:=f^*(\mathcal{P})    & &   \mathcal{P}  \\
				U    & &   X  \\ };
			\path[->]  (m-1-1) edge node[auto] {}(m-1-3);
			\path[ ->] (m-2-1) edge node[below] {}(m-2-3);
			\path[->] (m-1-1) edge node[auto] {$q$}(m-2-1);
			\path[->] (m-1-1) edge node[above] {$f'$}(m-1-3);
			\path[ ->] (m-1-3) edge node[auto] {$p$} (m-2-3);
			\path[->] (m-2-1) edge node[above] {$f$} (m-2-3);
		\end{tikzpicture}	.
	\end{equation*}
 Observe that \(f'\) is \(G\)-equivariant and also étale. Since, \(f\) is étale, i.e., in particular, unramified and smooth, we use \cite[Ch. I, Proposition 3.5]{Milneetco} and the exactness of the relative cotangent sequence (see \cite{SP1}) to get that the natural homomorphism \(f^* \Omega_X \cong \Omega_U\) is an isomorphism. Since, in particular \(f\) is flat, taking dual we have \(\mathscr{T}_U \cong f^*\mathscr{T}_X\) (see \cite[proof of Proposition 1.8]{stabRHa}).  Now, by \cite[Ch. III, Proposition 9.3]{AGHa}, we see that the natural homomorphism \[f^{*}(p_*\mathscr{T}_{\mathcal{P} / X}) \rightarrow q_* f'^* \mathscr{T}_{\mathcal{P} / X} \] 
 is an isomorphism. Then using \cite[Ch. II, Proposition 8.10]{AGHa} together with \cite[proof of Proposition 1.8]{stabRHa}, we get an isomorphism \[f^{*}(p_*\mathscr{T}_{\mathcal{P} / X}) \cong q_*\mathscr{T}_{\mathcal{Q} / U},\] 
 which is also \(G\)-equivariant. Hence, taking \(G\)-invariants we have \(	f^{*}((p_*\mathscr{T}_{\mathcal{P} / X})^G) \cong (q_*\mathscr{T}_{\mathcal{Q} / U})^G\). Note that these sheaves are coherent using \cite[Theorem 1.2]{Nevins}, as both \(p\) and \(q\) are projection maps of principal bundles. Similarly, using the fact that both \(f\) and \(f'\) are étale, we get an isomorphism \(f^*\mathcal{A}t(\mathcal{P}) \cong \mathcal{A}t(\mathcal{Q})\). Since all these isomorphisms are induced from natural homomorphisms, we have the following commutative diagram of coherent sheaves

		\begin{equation}\label{isotrivialcommtsq}
	\xymatrixrowsep{1.8pc} \xymatrixcolsep{2.2pc}
	\xymatrix{
		0\ar@{->}[r]&
		f^{*}((p_*\mathscr{T}_{\mathcal{P} / X})^G)\ar@{->}[r]^{}\ar@{->}[d]^{\cong}&f^*\mathcal{A}t(\mathcal{P})\ar@{->}[d]^{\cong}\ar@{->}[r]^{}&f^*\mathscr{T}_X\ar@{->}[d]_{\cong}\ar@{->}[d]\ar@{->}[r]&0\\
		0\ar@{->}[r]&
		(q_*\mathscr{T}_{\mathcal{Q} / U})^G\ar@{->}[r]^{}&\mathcal{A}t(\mathcal{Q})\ar@{->}[r]^{}&\mathscr{T}_U\ar@{->}[r]&{0}.
	}
			\end{equation}
	\noindent	
    The bottom row in \eqref{isotrivialcommtsq} is exact using Lemma \ref{preatseq}. Hence, the top row is also exact. This completes the proposition.
\end{proof}

	We have the identification of 
	\(\text{Der}_{\mathbb{K}}(\mathcal{O}(G))^G\), the space of left invariant derivations of \(G\) with \(\text{Der}_{\mathbb{K}}(\mathcal{O}_{G, 1_G}, \mathbb{K})\), the tangent space of \(G\)  at the identity. We use the same notation \(\mathfrak{g}\) for both notions interchangeably.  The adjoint bundle is defined by \(\text{ad}(\mathcal{P})= \mathcal{P} \times^{G} \mathfrak{g}\), where \(G\) acts on the Lie algebra $\mathfrak{g}$ via the adjoint action.
	
	\begin{prop}\label{atseqad}
With the same assumptions as the previous proposition, we have 
		\begin{equation*}
			(p_*\mathscr{T}_{\mathcal{P} / X})^G \cong \text{ad}(\mathcal{P}).
		\end{equation*}
	\end{prop}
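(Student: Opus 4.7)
The plan is to construct an intrinsic morphism $\Phi \colon \text{ad}(\mathcal{P}) \to (p_*\mathscr{T}_{\mathcal{P}/X})^G$ using the infinitesimal right $G$-action along the fibers, verify it is an isomorphism on an étale cover trivializing $\mathcal{P}$, and then globalize by étale descent, exactly in the spirit of the proof of Proposition \ref{atseq}.

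For the construction, recall that sections of $\text{ad}(\mathcal{P}) = \mathcal{P} \times^G \mathfrak{g}$ over an open $U \subset X$ correspond to $G$-equivariant morphisms $\sigma \colon p^{-1}(U) \to \mathfrak{g}$, with equivariance meaning $\sigma(e \cdot h) = \text{Ad}(h^{-1}) \sigma(e)$. To such a $\sigma$ I would associate the vertical derivation $\Phi(\sigma) \in \text{Der}_\mathbb{K}(\mathcal{O}_\mathcal{P}(p^{-1}(U)))$ defined on $\tilde{f} \in \mathcal{O}_\mathcal{P}(p^{-1}(U))$ by declaring its value at a point $e$ to be the directional derivative at $1_G$ of the map $h \mapsto \tilde{f}(e \cdot h)$ in the direction $\sigma(e) \in \mathfrak{g} = T_{1_G}G$. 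Since $\Phi(\sigma)$ is tangent to the fibers of $p$, it kills the image of $p^{\sharp}\mathcal{O}_X(U)$, hence lies in $(p_*\mathscr{T}_{\mathcal{P}/X})(U)$. A direct computation using the identity $h^{-1}\exp(tv)h = \exp(t\,\text{Ad}(h^{-1})v)$ together with the equivariance of $\sigma$ shows that $\Phi(\sigma)$ is invariant under the right $G$-action on $p_*\mathscr{T}_{\mathcal{P}}$ defined in the paper, so $\Phi(\sigma) \in (p_*\mathscr{T}_{\mathcal{P}/X})^G(U)$.

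To check $\Phi$ is an isomorphism, I would pass to an étale cover $\{f_i \colon U_i \to V_i\}$ trivializing $\mathcal{P}$, available by Remark \ref{isotrivial}. On such a chart, with $p^{-1}(U_i) \cong U_i \times G$, the equivariance condition determines $\sigma$ uniquely from its restriction to $U_i \times \{1_G\}$, giving a natural identification $\text{ad}(\mathcal{P})(U_i) \cong \mathcal{O}_{U_i} \otimes_\mathbb{K} \mathfrak{g}$. Simultaneously, $\mathscr{T}_{\mathcal{P}/X}|_{U_i \times G}$ is the pullback of $\mathscr{T}_G$, and the global sections of $\mathscr{T}_G$ invariant under the right $G$-action are precisely the left-invariant derivations, yielding $(p_*\mathscr{T}_{\mathcal{P}/X})^G(U_i) \cong \mathcal{O}_{U_i} \otimes_\mathbb{K} \mathfrak{g}$. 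Under these identifications, $\Phi|_{U_i}$ is the identity map, so $\Phi$ is a local isomorphism.

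Finally, because $\Phi$ is constructed intrinsically it commutes with pullback along the étale maps $f_i$, so the local isomorphisms above assemble (by étale descent, as in the proof of Proposition \ref{atseq}, noting that both sheaves are coherent by \cite[Theorem 1.2]{Nevins}) into a global isomorphism $\text{ad}(\mathcal{P}) \cong (p_*\mathscr{T}_{\mathcal{P}/X})^G$. The main conceptual point, more than a genuine obstacle, is recognizing that the transition between two trivializations, encoded by a morphism into $G$, acts on left-invariant vector fields by conjugation, i.e.\ by the adjoint representation on $\mathfrak{g}$; this is precisely the gluing cocycle defining $\text{ad}(\mathcal{P})$, which is what ensures the natural construction $\Phi$ is globally well-defined rather than chart-dependent.
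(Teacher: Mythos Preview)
Your proposal is correct, but it proceeds quite differently from the paper. You build an explicit morphism $\Phi$ on $X$ via the fundamental-vector-field construction (differentiating the right action), check by hand that it lands in $G$-invariant vertical derivations, and then verify it is an isomorphism \'etale-locally. The paper instead works upstairs on $\mathcal{P}$: it identifies $\mathscr{T}_{\mathcal{P}/X}$ with the trivial bundle $\mathcal{P}\times\mathfrak{g}$ (carrying the adjoint $G$-action) by recognizing it as the normal bundle of the diagonal $\mathcal{P}\hookrightarrow \mathcal{P}\times_X\mathcal{P}\cong \mathcal{P}\times G$, and then invokes general descent results (\cite[Theorem 2.3]{DNarasimhan}, \cite[Proposition 2.23]{PerT}) to conclude that both $(p_*\mathscr{T}_{\mathcal{P}/X})^G$ and $\text{ad}(\mathcal{P})$ arise as the descent of this same $G$-equivariant bundle. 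Your route is more concrete and yields the isomorphism directly as a map of sheaves on $X$, which can be useful if one later needs to compute with it; the paper's route is shorter and replaces the derivation bookkeeping with a clean normal-bundle identification plus an off-the-shelf descent statement. Underneath, both encode the same canonical isomorphism.
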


\begin{proof}
Since $\mathcal{P}$ is a principal \(G\)-bundle, we have a canonical isomorphism \(\mathcal{P} \times_X \mathcal{P} \cong \mathcal{P} \times G\). Thus the diagonal map \(\mathcal{P} \rightarrow \mathcal{P} \times_X \mathcal{P}\) becomes
\begin{equation*}
    j: \mathcal{P} \rightarrow \mathcal{P} \times G, \, e \mapsto (e, 1_G) \text{ for } e \in \mathcal{P}.
\end{equation*}
This gives the following exact sequence 
\[0 \rightarrow \mathscr{T}_{\mathcal{P}} \rightarrow j^* \mathscr{T}_{\mathcal{P} \times G} \rightarrow \mathcal{N}_{  \mathcal{P} / \mathcal{P} \times G } \rightarrow 0,\]
where \(\mathcal{N}_{  \mathcal{P} /    \mathcal{P} \times G }\) is the normal sheaf. Let \(p_1 : \mathcal{P} \times G \rightarrow \mathcal{P} \) and \(p_2 : \mathcal{P} \times G \rightarrow G\) denote the corresponding projections. Then, we have 
\begin{equation*}
    \begin{split}
        j^* \mathscr{T}_{\mathcal{P} \times G} \cong j^* (p_1^* \mathscr{T}_{\mathcal{P}} \oplus p_2^* \mathscr{T}_G)  \cong \mathscr{T}_{\mathcal{P}} \oplus (\mathcal{O}_{\mathcal{P}} \otimes \mathfrak{g}),
    \end{split}
\end{equation*}
where the second isomorphism follows from the fact that
\[p_1 \circ j=Id_{\mathcal{P}}, \, p_2 \circ j = 1_G \text{ and } \mathscr{T}_G \cong \mathcal{O}_G \otimes \mathfrak{g}.\]
Here the last isomorphism is \(G\)-equivariant where the \(G\)-action on the Lie algebra $\mathfrak{g}$ is given by the adjoint representation of \(G\) (see \cite[Corollary 4.4.4]{agSpringer}). Hence, we get that the normal bundle \(\mathcal{N}_{  \mathcal{P} /    \mathcal{P} \times G }\) is the trivial bundle \(\mathcal{P} \times \mathfrak{g}. \) Then using the canonical isomorphism $\mathscr{T}_{\mathcal{P} / X} \cong \mathcal{N}_{  \mathcal{P} /    \mathcal{P} \times G }$ (see \cite{SP2}), we get the following isomorphism
\begin{equation}\label{springer}
    \mathscr{T}_{\mathcal{P} / X} \cong \mathcal{P} \times \mathfrak{g}.
\end{equation}
Note that the relative tangent bundle $\mathscr{T}_{\mathcal{P} / X}$ is \(G\)-equivariant, and the isomorphism \eqref{springer} preserves the \(G\)-action, where \(G\) acts on the Lie algebra $\mathfrak{g}$ via the adjoint representation. Since the action of \(G\) on $\mathcal{P}$ is free,  $\mathscr{T}_{\mathcal{P} / X}$ descends to a locally free sheaf on \(X\), say $\mathcal{E}$ (see \cite[Theorem 2.3]{DNarasimhan}). Using \cite[Proposition 2.23]{PerT}, we get \(\mathcal{E} \cong (p_*\mathscr{T}_{\mathcal{P} / X})^G\). Now the trivial bundle \(\mathcal{P} \times \mathfrak{g} \rightarrow \mathcal{P}\) descends to the adjoint bundle   \(\text{ad}(\mathcal{P}).\) Hence, we have 
\[			(p_*\mathscr{T}_{\mathcal{P} / X})^G \cong \text{ad}(\mathcal{P}).\]
\end{proof}

	Thus, from Proposition \ref{atseq} and Proposition \ref{atseqad} we have the following short exact sequence of sheaves
	\begin{equation}\label{atiyah}
		0 \longrightarrow \text{ad}(\mathcal{P}) \longrightarrow \mathcal{A}t(\mathcal{P})  \stackrel{\eta} \longrightarrow  \mathscr{T}_X \longrightarrow 0,
	\end{equation}
	analogous to the Atiyah sequence of bundles in the smooth case (see \cite{At}). For any open set \(U \subset X\), \(\mathcal{A}t(\mathcal{P})(U)\) has a Lie algebra structure where the Lie bracket is given by 
	\begin{equation*}
		[\tilde{\delta}_1, \tilde{\delta}_2]:=\tilde{\delta}_1 \circ \tilde{\delta}_2 - \tilde{\delta}_2 \circ \tilde{\delta}_1 \text{   for } \tilde{\delta}_1, \, \tilde{\delta}_2 \in \mathcal{A}t(\mathcal{P})(U)
	\end{equation*}
	making $\mathcal{A}t(\mathcal{P})$ a sheaf of Lie algebras. Similarly,  the sheaf \(\mathscr{T}_X\) also acquires a Lie algebra structure. To see $\eta$ is a map preserving the Lie algebra structures, it suffices to check that 
	\begin{equation*}
		\eta_U(\tilde{\delta}_1 \circ \tilde{\delta}_2)=\eta_U(\tilde{\delta}_1) \circ \eta_U(\tilde{\delta}_2) \text{ for } \tilde{\delta}_1, \, \tilde{\delta}_2 \in \mathcal{A}t(\mathcal{P})(U)
	\end{equation*} 
	
	Let \(f \in \mathcal{O}_X(U)\), then 
	\begin{equation*}
		\begin{split}
			(\eta_U(\tilde{\delta}_1) \circ \eta_U(\tilde{\delta}_2))(f) &= \eta_U(\tilde{\delta}_1)(p_*(\tilde{\delta}_2(p^{\sharp}(f) ))\\
			&=p_*\left[ \tilde{\delta}_1(p_*(\tilde{\delta}_2(p^{\sharp}(f) ) )\circ p) \right] \\
			& =p_*\left[  \tilde{\delta}_1(\tilde{\delta}_2(p^{\sharp}(f))) \right]~ (\text{using \eqref{log_con1}})\\
			& =\eta_U(\tilde{\delta}_1 \circ \tilde{\delta}_2)(f).
		\end{split}
	\end{equation*}
	This shows that the homomorphism \(\eta\) is compatible with the Lie algebra structures on the sheaves \(\mathcal{A}t(\mathcal{P})\) and \(\mathscr{T}_X\).

	Let \(D\) be a reduced Weil divisor on the normal projective variety \(X\). Then consider the subsheaf of the Atiyah sheaf \(\mathcal{A}t(\mathcal{P})\)
	\[\mathcal{A}t(\mathcal{P}) (- \text{log } D) := \eta^{-1} \left( \mathscr{T}_X(- \text{log } D)  \right) ,\] called the logarithmic Atiyah sheaf associated to the principal bundle $\mathcal{P}$. This fits into the following exact sequence of sheaves, called the logarithmic Atiyah sequence, 
	\begin{equation}\label{log_at}
		0 \longrightarrow \text{ad}(\mathcal{P}) \longrightarrow \mathcal{A}t(\mathcal{P}) (- \text{log } D)  \stackrel{\bar{\eta}} \longrightarrow  \mathscr{T}_X(- \text{log } D) \longrightarrow 0 .
	\end{equation}
	Here, \(\bar{\eta}\) is just the map $\eta$ restricted to the sheaf $\mathcal{A}t(\mathcal{P}) (- \text{log } D)$. Note that the natural Lie algebra structure on the sheaf \(\mathcal{A}t(\mathcal{P})\) induces a Lie algebra structure on \(\mathcal{A}t(\mathcal{P})(- \text{log }D)\) and $\bar{\eta}$ is compatible with the Lie algebra structures on \(\mathcal{A}t(\mathcal{P})(- \text{log }D)\) and \(\mathscr{T}_X(- \text{log } D)\). 
	
	In general, \(\text{ad}(\mathcal{P})\) is locally free, \(\mathscr{T}_X(- \text{log } D)\) is reflexive and $\mathcal{A}t(\mathcal{P}) (- \text{log } D)$ is torsion free. However, if \(X\) is smooth and \(D\) is a simple normal crossing divisor, or if \(X\) is a toric variety and \(D\) is the boundary divisor, then \(\mathcal{A}t(\mathcal{P}) (- \text{log } D)\) is locally free from \eqref{log_at}, since both \(\text{ad}(\mathcal{P})\) and \(\mathscr{T}_X(- \text{log } D)\) are locally free. 
	
	\begin{defn}
		A logarithmic connection on the principal \(G\)-bundle $\mathcal{P}$ with singular locus \(D\) is a morphism of $\mathcal{O}_X$-modules
		\begin{equation}\label{def-log-con}
			\zeta : \mathscr{T}_X(- \text{log } D) \rightarrow \mathcal{A}t(\mathcal{P}) (- \text{log } D) 
		\end{equation}
		such that \(\bar{\eta} \circ \zeta = Id_{\mathscr{T}_X(- \text{log } D)}\). We say that a logarithmic connection is integrable if the map $\zeta$ in \eqref{def-log-con} additionally preserves the Lie algebra structures.
	\end{defn}

	
	\section{Logarithmic connections on a vector bundle}\label{residue sec}
	
	If $\mathcal{P}$ is a principal bundle with structure group \(GL(r, \mathbb{K})\), then there is a one-to-one correspondence between $\mathcal{P}$ and the associated vector bundle, say \(E\). In this section, we study the relation between connections on $\mathcal{P}$ and \(E\), respectively. This was established in \cite[Proposition 3.1.1]{GP} assuming the smoothness of the variety.

	Let \(X\) be a normal projective variety with a reduced Weil divisor \(D\) such that \(\Omega^1_X(\text{log } D)\) is locally free. This happens in particular when \(X\) is a toric variety and \(D\) is the boundary divisor. Let \(\pi : E \rightarrow X\) be a vector bundle of rank \(r\) on \(X\). Let $p : \mathcal{P}\rightarrow X$ be the associated principal \(GL(r, \mathbb{K})\)-bundle. Set \(G=GL(r, \mathbb{K})\). Let $\mathcal{E}$ be the locally free sheaf of $\mathcal{O}_X$-module associated with the vector bundle \(E\). Then we define a new sheaf \(D(\mathcal{E})\) of $\mathcal{O}_X$-module as follows (cf. \cite[Section 4]{At}):
	\begin{itemize}
		\item[(i)] as a sheaf of $\mathbb{K}$-modules, \(D(\mathcal{E})=\mathcal{E} \oplus \left( \mathcal{E} \otimes_{\mathcal{O}_X} \Omega^1_X(\text{log } D)\right) \),
		\item[(ii)] the $\mathcal{O}_X$-module structure on \(D(\mathcal{E})\) is given by \[f \cdot (s \oplus \gamma)=fs \oplus(f \gamma + s \otimes df),\] 
		where \(f \), \(s\) and $\gamma$ are local sections of $\mathcal{O}_X$, $\mathcal{E}$ and \(\mathcal{E} \otimes_{\mathcal{O}_X} \Omega^1_X(\text{log } D)\), respectively. Here, \(d : \mathcal{O}_X \rightarrow \Omega^1_X\) is the universal derivation and we consider \(df\) as an element of \(\Omega^1_X(\text{log } D)\)  via the composition of natural $\mathcal{O}_X$-linear map  \(\Omega^1_X \rightarrow (\Omega^1_X)^{\vee \vee} \rightarrow \Omega^1_X(\text{log } D)\).
	\end{itemize}
	Note that \(D(\mathcal{E})\) fits into the following short exact sequence of $\mathcal{O}_X$-modules
	\begin{equation}\label{dfun1}
		0 \longrightarrow \mathcal{E} \otimes_{\mathcal{O}_X} \Omega^1_X(\text{log } D) \stackrel{\iota_2} \longrightarrow D(\mathcal{E})  \stackrel{p_1} \longrightarrow  \mathcal{E} \longrightarrow 0,
	\end{equation}
	where $\iota_2$ and \(p_1\) are the corresponding inclusion and projection maps, respectively. Hence, \(D(\mathcal{E})\) is locally free. The extension \eqref{dfun1} defines an element \(b(E) \in H^1(X, \Hom(\mathcal{E}, \mathcal{E} \otimes_{\mathcal{O}_X} \Omega^1_X(\text{log } D)))\). Note that there are canonical isomorphisms
	\begin{equation*}
		\begin{split}
			& \Hom(\mathcal{E}, \mathcal{E} \otimes_{\mathcal{O}_X} \Omega^1_X(\text{log } D)) \cong \Hom(\mathscr{T}_X(-\text{log } D), \mathcal{E}nd (\mathcal{E})) \text{ and}\\
			&	\mathcal{E}nd (\mathcal{E}) \cong ad(\mathcal{P}) ~ \text{(cf. \cite[Proposition 9]{At})}.
		\end{split}
	\end{equation*}
	
	So, we may regard \(b(E)\) as an element of \(H^1(X,  \Hom(\mathscr{T}_X(-\text{log } D), ad(\mathcal{P})))\). On the other hand,  the sequence \eqref{log_at} defines an element \(a(\mathcal{P})\) in \(H^1(X,  \Hom(\mathscr{T}_X(-\text{log } D), ad(\mathcal{P})))\), called the logarithmic Atiyah class of the principal bundle. We shall show that \(b(E)\) is essentially the same as \(a(\mathcal{P})\) by explicitly computing their corresponding representative cocycles (see \cite[Section 1]{At}).
	
	\begin{thm}\label{a=b}
		Let \(X\) be a normal projective variety. Let \(D\) be a reduced Weil divisor on \(X\) such that \(\Omega^1_X(\text{log } D)\) is locally free.	Let $\pi: E \rightarrow X$ be a vector bundle on \(X\) and $\mathcal{P}$ the associated principal \(GL(r, \mathbb{K})\)-bundle. Then the obstructions \(a(\mathcal{P})\) and \(b(E)\) are related as follows:
		\begin{equation*}
			a(\mathcal{P})=-\, b(E).
		\end{equation*}
	\end{thm}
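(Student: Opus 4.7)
The plan is to mirror Atiyah's original computation from \cite[Proposition 12]{At} in the logarithmic setting: realize both $a(\mathcal{P})$ and $b(E)$ as explicit \v{C}ech $1$-cocycles on a common trivializing cover, and observe that under the canonical identifications they differ by an overall sign. Concretely, I would fix a Zariski open cover $\{U_\alpha\}$ of $X$ trivializing the vector bundle $\pi: E \rightarrow X$ via local frames $(e_\alpha^1, \ldots, e_\alpha^r)$, with transition matrices $g_{\alpha\beta}: U_{\alpha\beta} \to GL(r, \mathbb{K})$ determined by $e_\alpha^i = \sum_j (g_{\alpha\beta})^i_j \, e_\beta^j$. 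The same cover simultaneously trivializes the associated principal bundle $\mathcal{P}$.

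For $a(\mathcal{P})$, over each $U_\alpha$ the logarithmic Atiyah sequence \eqref{log_at} splits via the map $\sigma_\alpha : \mathscr{T}_X(-\text{log } D)|_{U_\alpha} \to \mathcal{A}t(\mathcal{P})(-\text{log } D)|_{U_\alpha}$ sending a log derivation $\delta$ to the trivial lift $\tilde{\delta}(f \otimes h) = \delta(f) \otimes h$, as in the proof of Lemma \ref{preatseq}. The differences $\sigma_\alpha - \sigma_\beta$ form a \v{C}ech $1$-cocycle in $\Hom(\mathscr{T}_X(-\text{log } D), \text{ad}(\mathcal{P}))(U_{\alpha\beta})$ representing $a(\mathcal{P})$. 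A direct local computation, parallel to Atiyah's and permissible because $\Omega^1_X(\text{log } D)$ is locally free by hypothesis, shows that this cocycle is $\delta \mapsto g_{\alpha\beta}^{-1} \, \delta(g_{\alpha\beta})$, viewed as a $\mathfrak{gl}_r$-valued function through the identification $\text{ad}(\mathcal{P}) \cong \mathcal{E}nd(\mathcal{E})$ of \cite[Proposition 9]{At}.

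For $b(E)$, on each $U_\alpha$ the extension \eqref{dfun1} is split by the $\mathcal{O}_X$-linear map $\tau_\alpha$ defined on the frame by $\tau_\alpha(e_\alpha^i) = (e_\alpha^i, 0)$ and extended $\mathcal{O}_X$-linearly; the module rule $f \cdot (s, \gamma) = (fs, f\gamma + s \otimes df)$ then forces $\tau_\alpha(f \cdot e_\alpha^i) = (f e_\alpha^i, e_\alpha^i \otimes df)$. Computing $\tau_\beta(e_\alpha^i)$ through the change of frame together with this module rule yields
\begin{equation*}
(\tau_\alpha - \tau_\beta)(e_\alpha^i) = - \sum_j e_\beta^j \otimes d(g_{\alpha\beta})^i_j \in (\mathcal{E} \otimes_{\mathcal{O}_X} \Omega^1_X(\text{log } D))(U_{\alpha\beta}),
\end{equation*}
which under the canonical isomorphisms $\Hom(\mathcal{E}, \mathcal{E} \otimes \Omega^1_X(\text{log } D)) \cong \Hom(\mathscr{T}_X(-\text{log } D), \mathcal{E}nd (\mathcal{E})) \cong \Hom(\mathscr{T}_X(-\text{log } D), \text{ad}(\mathcal{P}))$ becomes the map $\delta \mapsto - g_{\alpha\beta}^{-1}\, \delta(g_{\alpha\beta})$, the negative of the cocycle representing $a(\mathcal{P})$. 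Hence $a(\mathcal{P}) = -b(E)$ in cohomology.

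The main subtlety to handle with care is precisely the sign introduced by the module formula $f \cdot (s, \gamma) = (fs, f\gamma + s \otimes df)$, which is the ultimate source of the $-1$; aside from this, the argument is essentially bookkeeping. The hypothesis that $\Omega^1_X(\text{log } D)$ is locally free guarantees the existence of the required local splittings and ensures that the composition $\mathcal{O}_X \xrightarrow{d} \Omega^1_X \to \Omega^1_X(\text{log } D)$ behaves exactly as the universal derivation in Atiyah's smooth setting, so the logarithmic generalization goes through formally on the normal variety $X$.
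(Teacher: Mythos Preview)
Your approach is essentially the same as the paper's: fix a common trivializing cover, write down the obvious local splittings of \eqref{dfun1} and \eqref{log_at}, and compare the resulting \v{C}ech $1$-cocycles to see they differ by a sign. The paper carries out the $a(\mathcal{P})$ side in more detail (Proposition~\ref{cocycleAP} and Lemma~\ref{matrixfpq}) and records the cocycle as $\delta \mapsto d(C_{\sigma\tau})C_{\tau\sigma}(\delta)$ rather than your $g_{\alpha\beta}^{-1}\delta(g_{\alpha\beta})$, but this discrepancy is only a choice of frame in the identification $\text{ad}(\mathcal{P})\cong\mathcal{E}nd(\mathcal{E})$ together with the order of indices in the difference $\sigma_\alpha-\sigma_\beta$, and does not affect the argument.
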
 
	
	In the rest of this subsection, we shall prove this theorem. Let \(\{U_{\sigma}\}_{\sigma}\) be a trivializing open cover of \(X\). (Note that here we are not assuming \(X\) to be a toric variety.) Let us denote \(U_{\sigma} \cap U_{\tau}\) by \(U_{\sigma \, \tau}\). Consider the following trivialization:
	\begin{equation}\label{dfun2}
		\begin{split}
			&	\phi_{\sigma} : U_{\sigma} \times \mathbb{K}^r \rightarrow E_{\sigma}:=E|_{U_{\sigma}}, \\
			&\phi_{\sigma}^{-1} \, \phi_{\tau} : U_{\sigma \, \tau} \times \mathbb{K}^r \rightarrow U_{\sigma \, \tau} \times \mathbb{K}^r, ~ (x, \lambda) \mapsto (x, g_{\sigma \, \tau}(x) \lambda), \\
			& \text{where } g_{\sigma \, \tau}: U_{\sigma \, \tau} \rightarrow G \text{ are the transition functions and } G=GL(r, \mathbb{K}).
		\end{split}
	\end{equation}
	The map \(g_{\sigma \, \tau}\) induces the following map between the coordinate rings
	\begin{equation*}
		g_{\sigma \, \tau}^{\sharp} :  \mathcal{O}(G) \rightarrow  \mathcal{O}(U_{\sigma \, \tau}).
	\end{equation*}
	The cooordinate ring of the algebraic group \(G=GL(r, \mathbb{K})\) is given by \(\mathcal{O}(G)  =\mathbb{K} [S_{ij}, \frac{1}{det(S_{ij})} \mid 1 \leq i,j \leq r]\). Hence the homomorphism $g_{\sigma \, \tau}^{\sharp}$ is determined by a matrix 
	\begin{equation}\label{transition_matrix}
		C_{\sigma \, \tau}=(c^{\sigma \, \tau}_{i j})_{r \times r} \in GL(r, \mathcal{O}(U_{\sigma \, \tau})), \text{ where } c^{\sigma \, \tau}_{i j}=	g_{\sigma \, \tau}^{\sharp}(S_{ij}).
	\end{equation}
	
	\begin{prop}\label{cocyclebP}
		A representative cocycle for \(b(E)\) in \(H^1(X,  \Hom(\mathscr{T}_X(-\text{log } D),  \mathcal{E}nd (\mathcal{E})))\) is given by \(\left\lbrace \left( \delta \mapsto - d(C_{ \sigma \, \tau}) C_{\tau \, \sigma}(\delta)\right) \right\rbrace \), $\delta \in \mathscr{T}_{U_{\sigma \, \tau}}(-\text{log } D)(U_{\sigma \, \tau})$.
	\end{prop}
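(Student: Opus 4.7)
The plan is to realize $b(E)$ as a \v{C}ech $1$-cocycle obtained from the differences of local splittings of the short exact sequence \eqref{dfun1}. On each trivializing open $U_{\sigma}$ I would use the trivialization $\phi_{\sigma}$ of \eqref{dfun2} to define a local exterior derivative $d_{\sigma}\colon \mathcal{E}|_{U_{\sigma}}\to (\mathcal{E}\otimes_{\mathcal{O}_X}\Omega^1_X)|_{U_{\sigma}}$, namely componentwise differentiation in the chosen frame, and then postcompose with the canonical $\mathcal{O}_X$-linear map $\Omega^1_X\to\Omega^1_X(\text{log } D)$ through the reflexive hull mentioned in part (ii) of the definition of $D(\mathcal{E})$, so that $d_{\sigma}$ lands in $\mathcal{E}\otimes\Omega^1_X(\text{log } D)$. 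Setting $s_{\sigma}(s):= s\oplus d_{\sigma}s$, the twisted $\mathcal{O}_X$-module structure on $D(\mathcal{E})$ is designed exactly so that the Leibniz rule $d_{\sigma}(fs)=f\,d_{\sigma}s+s\otimes df$ matches the action of $f$, and a one-line check confirms that $s_{\sigma}$ is an $\mathcal{O}_X$-linear local splitting of $p_1$.

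On each double overlap $U_{\sigma \, \tau}$ the difference $s_{\sigma}-s_{\tau}$ is killed by $p_1$, hence factors through $\iota_2$ as $\iota_2\circ\phi_{\sigma \, \tau}$ for a morphism $\phi_{\sigma \, \tau}\colon \mathcal{E}\to \mathcal{E}\otimes\Omega^1_X(\text{log } D)$, and the family $\{\phi_{\sigma \, \tau}\}$ is then a \v{C}ech $1$-cocycle representing $b(E)$. To evaluate it, I would write a section of $\mathcal{E}$ over $U_{\sigma \, \tau}$ in both frames: by \eqref{dfun2} and \eqref{transition_matrix}, its $\sigma$- and $\tau$-coordinates are related by $f^{\sigma}=C_{\sigma \, \tau}f^{\tau}$, and the Leibniz rule gives
\[
d_{\sigma}s - d_{\tau}s = d(C_{\sigma \, \tau}f^{\tau})-C_{\sigma \, \tau}\,df^{\tau}=dC_{\sigma \, \tau}\cdot f^{\tau}=(dC_{\sigma \, \tau}\cdot C_{\tau \, \sigma})\cdot f^{\sigma}.
\]
Thus, in the $\sigma$-frame, $\phi_{\sigma \, \tau}$ is multiplication by the matrix-valued logarithmic $1$-form $dC_{\sigma \, \tau}\cdot C_{\tau \, \sigma}$. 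Passing through the canonical isomorphism $\Hom(\mathcal{E},\mathcal{E}\otimes\Omega^1_X(\text{log } D))\cong \Hom(\mathscr{T}_X(-\text{log } D),\mathcal{E}nd(\mathcal{E}))$ noted just after \eqref{dfun1} and contracting with a local derivation $\delta$, the cocycle at $(\sigma,\tau)$ becomes $\delta\mapsto dC_{\sigma \, \tau}(\delta)\,C_{\tau \, \sigma}$; the overall minus sign in the stated formula then comes from the \v{C}ech convention (equivalently, from taking $\phi_{\tau \, \sigma}=-\phi_{\sigma \, \tau}$, or from the identity $dC_{\tau \, \sigma}\cdot C_{\sigma \, \tau}=-C_{\tau \, \sigma}\cdot dC_{\sigma \, \tau}$ obtained by differentiating $C_{\sigma \, \tau}\,C_{\tau \, \sigma}=I$).

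The only delicate point in the plan is confirming that $d_{\sigma}$ actually takes values in $\mathcal{E}\otimes\Omega^1_X(\text{log } D)$ rather than merely in $\mathcal{E}\otimes(\Omega^1_X)^{\vee\vee}$. This is exactly where the hypothesis that $\Omega^1_X(\text{log } D)$ is locally free enters: combined with the natural factorization $\Omega^1_X\to(\Omega^1_X)^{\vee\vee}\to\Omega^1_X(\text{log } D)$ from (ii), it lets the whole computation go through on the (possibly singular) base $X$ without invoking smoothness, and ensures that the $\mathcal{O}_X$-linearity of $s_{\sigma}$ and the identification between $1$-forms and $\mathscr{T}_X(-\text{log } D)$-valued endomorphisms are both legitimate.
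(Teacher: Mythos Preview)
Your approach is essentially the same as the paper's: define local $\mathcal{O}_X$-linear splittings $s\mapsto s\oplus d_{\sigma}s$ of \eqref{dfun1} using componentwise differentiation in each trivializing frame, then compute the \v{C}ech difference in the $\sigma$-frame via the Leibniz rule and the relation $f^{\sigma}=C_{\sigma\tau}f^{\tau}$. The one place to tighten is the sign: the paper (following Atiyah) takes the cocycle to be $b_{\sigma\tau}=\psi_{\tau}-\psi_{\sigma}=d_{\tau}-d_{\sigma}$, which directly yields $-\,dC_{\sigma\tau}\,C_{\tau\sigma}$ in the $\sigma$-frame, whereas you compute $d_{\sigma}-d_{\tau}$ and then appeal to ``the \v{C}ech convention'' to fix the sign; simply adopting the $\psi_{\tau}-\psi_{\sigma}$ convention from the outset removes the need for that last hand-wave.
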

	
	\begin{proof}
		Write \(U=U_{\sigma \, \tau}\) for ease of notation. The induced sheaf homomorphisms of the bundle maps in \eqref{dfun2} are given as follows:
		\begin{equation}\label{dfun1.1}
			\begin{split}
				& \boldsymbol{\phi}_{ \, \sigma} : \mathcal{O}_{U}^{\oplus r} \rightarrow \mathcal{E}_{\sigma}:=\mathcal{E}|_{U_{\sigma}}, \\
				& \boldsymbol{\phi}_{\, \sigma}^{-1} \boldsymbol{\phi}_{\, \tau} : \mathcal{O}_{U}^{\oplus r} \rightarrow \mathcal{O}_{U}^{\oplus r} , \, \underline{f}=(f_1, \ldots, f_r)^t \mapsto C_{\sigma \, \tau} \underline{f}.
			\end{split}
		\end{equation}
		Now, the universal derivation \(d:\mathcal{O}_{U_{\sigma}} \rightarrow \Omega^1_{U_{\sigma}}\) induces a map, also denoted by \(d\), defined by \[d:\mathcal{O}_{U_{\sigma}}^{\oplus r} \rightarrow \mathcal{O}_{U_{\sigma}}^{\oplus r} \otimes \Omega^1_{U_{\sigma}}(\text{log } D).\]
		Define \(d_{\sigma}\) as the composite of the following maps:
		\begin{equation}\label{dfun2.2}
			\mathcal{E}_{\sigma} \stackrel{\boldsymbol{\phi}_{\, \sigma}^{-1}} \longrightarrow \mathcal{O}_{U_{\sigma}}^{\oplus r} \stackrel{d} \longrightarrow \mathcal{O}_{U_{\sigma}}^{\oplus r} \otimes \Omega^1_{U_{\sigma}}(\text{log } D) \stackrel{\boldsymbol{\phi}_{\, \sigma} \otimes id} \longrightarrow \mathcal{E}_{\sigma} \otimes \Omega^1_{U_{\sigma}}(\text{log } D).
		\end{equation}
		This enables us to define a local splitting of the exact sequence \eqref{dfun1} as $\mathcal{O}_X$-modules
		\begin{equation*}
			\begin{split}
				\boldsymbol{\psi}_{\, \sigma} : \mathcal{E}_{\sigma} & \rightarrow D(\mathcal{E})_{\sigma}, \\
				s & \mapsto s \oplus d_{\sigma}(s),
			\end{split}
		\end{equation*}
		where \(s \) is a local section of $\mathcal{E}_{\sigma}$. Then define elements \(b_{\sigma \, \tau} \in H^0(U, \Hom(\mathcal{E}, \mathcal{E} \otimes \Omega^1_{X}(\text{log } D)))\) by 
		\begin{equation}\label{dfun2.3}
			b_{\sigma \, \tau}(s)=	\boldsymbol{\psi}_{\, \tau}(s)-	\boldsymbol{\psi}_{\, \sigma}(s)=d_{\tau}(s)-d_{\sigma}(s).
		\end{equation}
		Note that \(\{b_{\sigma \, \tau}\}\) is a representative cocycle for \(b(E)\) (cf. \cite[Pages 184 - 185]{At}). Next, we give an explicit description of the composition of the following maps
		\begin{equation}\label{dfun2.4}
			(\boldsymbol{\phi}_{\, \sigma} \otimes id)^{-1} \circ b_{\sigma \, \tau} \circ \boldsymbol{\phi}_{\, \sigma} |_{U} : \mathcal{O}_{U}^{\oplus r} \rightarrow \mathcal{O}_{U}^{\oplus r} \otimes \Omega^1_{U}(\text{log } D) \cong \Omega^1_{U}(\text{log } D)^{\oplus  r}.
		\end{equation}
		Let \(\underline{f}=(f_1, \ldots, f_r)^t \in \mathcal{O}_{U}^{\oplus r}\), then we have
		\begin{equation}\label{dfun1.2}
			\begin{split}
				&	(\boldsymbol{\phi}_{\, \sigma} \otimes id)^{-1} \circ b_{\sigma \, \tau} \circ \boldsymbol{\phi}_{\, \sigma}(\underline{f})\\
				& = (\boldsymbol{\phi}_{\, \sigma} \otimes id)^{-1} \circ d_{\tau} \circ \boldsymbol{\phi}_{\, \sigma}(\underline{f})-(\boldsymbol{\phi}_{\, \sigma} \otimes id)^{-1} \circ d_{\sigma} \circ \boldsymbol{\phi}_{\, \sigma}(\underline{f}) ~ ~ (\text{by \eqref{dfun2.3}})\\
				& =(\boldsymbol{\phi}_{\, \sigma} \otimes id)^{-1} \circ (\boldsymbol{\phi}_{\, \tau} \otimes id) \circ d \circ \boldsymbol{\phi}_{\, \tau}^{-1} \circ \boldsymbol{\phi}_{\, \sigma}(\underline{f})-(\boldsymbol{\phi}_{\, \sigma} \otimes id)^{-1} \circ d_{\sigma} \circ \boldsymbol{\phi}_{\, \sigma}(\underline{f}) ~ ~ (\text{by \eqref{dfun2.2}})\\
				&= (\boldsymbol{\phi}_{\, \sigma} \otimes id)^{-1} \circ (\boldsymbol{\phi}_{\, \tau} \otimes id) \circ d\left( C_{\tau \, \sigma} \underline{f}\right) -d(\underline{f}) ~ ~ (\text{using \eqref{dfun1.1}})\\
				&=( \phi_{\sigma}^{-1} \, \phi_{\tau} \otimes id) \circ \left( d \left( C_{\tau \, \sigma}\right)  \underline{f} + C_{\tau \, \sigma} d \underline{f} \right) -d(\underline{f}) ~ ~ (\text{since } d\left( C_{\tau \, \sigma} \underline{f}\right)=d \left( C_{\tau \, \sigma}\right)  \underline{f} + C_{\tau \, \sigma} \, d \underline{f})\\
				&=C_{\sigma \, \tau}  \, d\left( C_{\tau \, \sigma}\right) \underline{f} +  C_{\sigma \, \tau} \, C_{\tau \, \sigma} d (\underline{f})-d(\underline{f})~ ~ (\text{again by \eqref{dfun1.1}})\\
				&=C_{\sigma \, \tau}  \, d\left( C_{\tau \, \sigma}\right) \underline{f} ~ ~(\text{since } C_{\sigma \, \tau} \, C_{\tau \, \sigma}=Id)\\
				&= -d \left( C_{\sigma \, \tau}\right) C_{\tau \, \sigma} (\underline{f})~ ~(\text{since } C_{\sigma \, \tau}d(C_{\tau \, \sigma}) + d \left( C_{\sigma \, \tau}\right) C_{\tau \, \sigma}=0).
			\end{split}
		\end{equation}

		Thus, we have
		\begin{equation*}
			\begin{split}
				& b_{\sigma \, \tau}=(\boldsymbol{\phi}_{\, \sigma} \otimes id) \circ (-d \left( C_{\sigma \, \tau}\right) C_{\tau \, \sigma}) \circ \boldsymbol{\phi}_{\, \sigma}^{-1} |_{U}: \mathcal{E}|_U \rightarrow \mathcal{E}|_U \otimes \Omega^1_{U}(\text{log } D).
			\end{split}
		\end{equation*}
		Under the following  canonical identification \[\text{Hom}_{\mathcal{O}(U)}(\mathcal{E}(U), \mathcal{E}(U) \otimes \Omega^1_{U}(\text{log } D)(U)) \cong \text{Hom}_{\mathcal{O}(U)}(\mathscr{T}_U(-\text{log } D)(U), \text{End}(\mathcal{E}(U))),\] 
		the element \(b_{\sigma \, \tau}\) corresponds to the map 
		\begin{equation*}
			\begin{split}
				\widetilde{b}_{\sigma \, \tau}: \mathscr{T}_U(-\text{log } D)(U)  & \rightarrow \text{End}(\mathcal{E}(U))  \cong M(r, A),\\
				\delta & \mapsto -  d(C_{ \sigma \,\tau}) C_{\tau \, \sigma} (\delta),
			\end{split}
		\end{equation*}
		where \(\delta \in \mathscr{T}_U(-\text{log } D)(U)\) and \(A=\mathcal{O}(U)\).

	\end{proof}
	
	We now calculate the obstruction \(a(\mathcal{P})\) explicitly in terms of the transition functions of the principal bundle $\mathcal{P}$ following \cite[Section 3]{At}. The associated principal bundle $\mathcal{P}$ has a local trivialization induced from the local trivialization given in \eqref{dfun2}:
	\begin{equation}\label{dfun3}
		\begin{split}
			&	\varphi_{\sigma} : U_{\sigma} \times G \rightarrow \mathcal{P}_{\sigma}:=\mathcal{P}|_{U_{\sigma}} \\
			&\varphi_{\sigma}^{-1} \, \varphi_{\tau} : U_{\sigma \, \tau} \times G \rightarrow U_{\sigma \, \tau} \times G, (x, \lambda) \mapsto (x, g_{\sigma \, \tau}(x) \lambda), 
		\end{split}
	\end{equation}
	\(\text{where } g_{\sigma \, \tau}: U_{\sigma \, \tau} \rightarrow G \text{ are the transition functions}.\)
	The isomorphism 
	\[\varphi_{\sigma} : U_{\sigma} \times G \stackrel{\cong} \rightarrow \mathcal{P}_{\sigma}:=\mathcal{P}|_{U_{\sigma}}\] induces a \(G\)-equivariant isomorphism:
	\begin{equation*}
		\widehat{\varphi}_{\sigma} :  \text{Der}_{\mathbb{K}} ( \mathcal{O}(U_{\sigma}) \otimes \mathcal{O}(G) ) \rightarrow \text{Der}_{\mathbb{K}} (\mathcal{O}(\mathcal{P}_{\sigma}) ).
	\end{equation*}
	Taking invariants, we get
	\begin{equation*}
		\widehat{\varphi}_{\sigma} : ( \text{Der}_{\mathbb{K}}( \mathcal{O}(U_{\sigma}) \otimes \mathcal{O}(G) ))^G \rightarrow (\text{Der}_{\mathbb{K}} (\mathcal{O}(\mathcal{P}_{\sigma})))^G.
	\end{equation*}
	Note that \[  \text{Der}_{\mathbb{K}}( \mathcal{O}(U_{\sigma}) ) \subset ( \text{Der}_{\mathbb{K}}( \mathcal{O}(U_{\sigma}) \otimes \mathcal{O}(G) ))^G,\] where we consider \(\delta \in  \text{Der}_{\mathbb{K}}( \mathcal{O}(U_{\sigma})) \) as an element of \(\text{Der}_{\mathbb{K}}( \mathcal{O}(U_{\sigma}) \otimes \mathcal{O}(G) ))^G\) in the following way:
	\begin{equation*}
		\alpha \otimes \beta \mapsto \delta(\alpha) \otimes \beta, \text{ where } \alpha \in \mathcal{O}(U_{\sigma}) \text{ and } \beta \in \mathcal{O}(G).
	\end{equation*}
	Consider the restriction of \(\widehat{\varphi}_{\sigma} \) to $\text{Der}_{\mathbb{K}}( \mathcal{O}(U_{\sigma}) )$ given by
	\begin{equation*}
		a_{\sigma}: \text{Der}_{\mathbb{K}}( \mathcal{O}(U_{\sigma}) ) \rightarrow  (\text{Der}_{\mathbb{K}} (\mathcal{O}(\mathcal{P}_{\sigma})))^G.
	\end{equation*}
	Then \(a_{\sigma}\) defines a local splitting of \eqref{atiyah}. Further restricting this map to \(\mathscr{T}_{U_{\sigma}}( - \text{ log }D)\), we get
	\begin{equation}\label{obs1}
		a_{\sigma} : \mathscr{T}_{U_{\sigma}}( - \text{log }D)(U_{\sigma})  \rightarrow \mathcal{A}t(\mathcal{E}) (- \text{log } D)(U_{\sigma}),
	\end{equation}
	which gives a local splitting of logarithmic Atiyah sequence \eqref{log_at}. Set \(U=U_{\sigma \, \tau}\) and define 
	\begin{equation*}
		\begin{split}
			a_{\sigma\tau}: \mathscr{T}_{U}( - \text{log }D)(U) & \rightarrow  \mathcal{A}t(\mathcal{E}) (- \text{log } D)(U), \\
			a_{\sigma\tau} (\delta):&=	a_{\tau}(\delta)-  	a_{\sigma}(\delta) = \widehat{\varphi}_{\tau}(\delta)- \widehat{\varphi}_{\sigma}(\delta) ~ (\text{using \eqref{obs1}}).
		\end{split}
	\end{equation*} 
	Then \(\{a_{\sigma \, \tau}\}\) is a representative cocycle for \(a(\mathcal{P})\). Recall the definition of \(C_{\sigma \, \tau}\) from \eqref{transition_matrix}.
	
	\begin{prop}\label{cocycleAP}
		For \(\delta \in \mathscr{T}_U(-\text{log } D)(U)\), we have 
		\begin{equation*}
			\widehat{\varphi}_{\sigma}^{-1}(a_{\sigma\tau} (\delta))=\left( d(C_{\sigma \, \tau}) C_{\tau \, \sigma} \right) (\delta).
		\end{equation*}
		
	\end{prop}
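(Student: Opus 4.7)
The plan is a direct calculation in local coordinates. Writing $\delta' \in (\text{Der}_{\mathbb{K}}(\mathcal{O}(U) \otimes \mathcal{O}(G)))^G$ for the extension of $\delta$ acting trivially on the $\mathcal{O}(G)$ factor, the construction of $a_\sigma$ in \eqref{obs1} gives $\widehat{\varphi}_\sigma^{-1}(a_\sigma(\delta)) = \delta'$, so
$$\widehat{\varphi}_\sigma^{-1}(a_{\sigma\tau}(\delta)) \;=\; \widehat{\varphi}_\sigma^{-1}(\widehat{\varphi}_\tau(\delta)) \;-\; \delta'.$$
Setting $\tau_{\sigma\tau} := \varphi_\sigma^{-1} \circ \varphi_\tau : U \times G \to U \times G$ and using that $\widehat{\varphi}_\alpha$ is just conjugation by $\varphi_\alpha^{\sharp}$, one immediately checks by contravariance that $\widehat{\varphi}_\sigma^{-1}(\widehat{\varphi}_\tau(\delta)) = (\tau_{\sigma\tau}^{\sharp})^{-1} \circ \delta' \circ \tau_{\sigma\tau}^{\sharp}$, where $\delta'$ on the right-hand side denotes the natural trivial extension.

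From the description of the transition in \eqref{dfun3}, I would then compute $\tau_{\sigma\tau}^{\sharp}$ on the algebra generators of $\mathcal{O}(U) \otimes \mathbb{K}[S_{ij}, (\det S)^{-1}]$: it fixes $\mathcal{O}(U) \otimes 1$ and sends $1 \otimes S_{ij}$ to $\sum_k c^{\sigma\tau}_{ik} \otimes S_{kj}$. Consequently $\widehat{\varphi}_\sigma^{-1}(a_{\sigma\tau}(\delta))$ acts as zero on $\mathcal{O}(U) \otimes 1$, so it belongs to the vertical subsheaf $(p_*\mathscr{T}_{\mathcal{P}/X})^G(U) \cong \text{ad}(\mathcal{P})(U)$, as the cocycle $\{a_{\sigma\tau}\}$ is bound to do. On the generator $1 \otimes S_{ij}$, using $(\tau_{\sigma\tau}^{\sharp})^{-1} = \tau_{\tau\sigma}^{\sharp}$, the Leibniz rule, and the fact that $\delta$ annihilates the elements of $\mathcal{O}(G)$, a direct calculation yields
$$\widehat{\varphi}_\sigma^{-1}(a_{\sigma\tau}(\delta))(1 \otimes S_{ij}) \;=\; \sum_{k,m} \delta(c^{\sigma\tau}_{ik})\, c^{\tau\sigma}_{km} \otimes S_{mj} \;=\; \sum_m \bigl[d(C_{\sigma\tau}) C_{\tau\sigma}(\delta)\bigr]_{im} \otimes S_{mj}.$$

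Finally, I would match this against the identification of $M(r,\mathcal{O}(U))$ with the $\mathcal{O}(U)$-module of vertical invariant derivations on $\mathcal{O}(U)\otimes\mathcal{O}(G)$, obtained from $\mathfrak{g} = M(r,\mathbb{K})$ acting on $\mathcal{O}(G)$ via the invariant vector fields compatible with the $G$-action of Section~\ref{log conn sec}: a matrix $N=(n_{ij})$ corresponds to the unique derivation $\tilde N$ characterized by $\tilde N(f\otimes 1)=0$ and $\tilde N(1\otimes S_{ij}) = \sum_m n_{im}\otimes S_{mj}$. Applied to $N = d(C_{\sigma\tau}) C_{\tau\sigma}(\delta)$, this is precisely the formula above, and the claim follows. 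The only genuine subtlety in executing this plan is maintaining the correct variance conventions throughout, since the order of the product $d(C_{\sigma\tau})\cdot C_{\tau\sigma}$ (as opposed to its reverse) is dictated by the convention under which $\mathfrak{g}\cong M(r,\mathbb{K})$ is identified with $G$-invariant derivations on $\mathcal{O}(G)$; once this is fixed coherently with the $G$-action used to define $\mathcal{A}t(\mathcal{P})$, the rest of the argument is pure Leibniz bookkeeping.
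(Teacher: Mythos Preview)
Your proposal is correct and follows essentially the same route as the paper: both reduce to computing $\widehat{(\varphi_\sigma^{-1}\varphi_\tau)}(\delta)-\delta$, evaluate the ring map $(\varphi_\sigma^{-1}\varphi_\tau)^{\sharp}$ on the generators $S_{ij}$ of $\mathcal{O}(G)$, and then identify the resulting vertical $G$-invariant derivation with a matrix in $M(r,\mathcal{O}(U))$. The only difference is in this last identification: the paper carries it out pointwise via the auxiliary Lemma~\ref{matrixfpq} (translating by $g_{\sigma\tau}(x)^{-1}$ and reading off coordinates in the basis $\omega_{kl}=\partial/\partial S_{kl}|_{1_G}$), whereas you do it globally by directly matching the action on $1\otimes S_{ij}$ against the right-invariant vector field associated to $N\in M(r,\mathcal{O}(U))$. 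Your version is a bit more streamlined and sidesteps the pointwise lemma, but the two computations are unwinding the same isomorphism \eqref{matrix}; your caveat about fixing the variance convention so that the product order $d(C_{\sigma\tau})\,C_{\tau\sigma}$ comes out correctly is exactly the content that the paper handles through the translation in \eqref{compo.p21}.
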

	
	\begin{proof}

		Observe that 
		\begin{equation}\label{isopart1}
			\widehat{\varphi}_{\sigma}^{-1}(a_{\sigma\tau} (\delta))=(\widehat{\varphi}_{\sigma}^{-1} \circ \widehat{\varphi}_{\tau})(\delta)-\delta=\widehat{(\varphi_{\sigma}^{-1} \circ \varphi_{\tau})}(\delta) - \delta.
		\end{equation}
		
		Henceforth, we use \(U=U_{\sigma \, \tau}, \, \varphi=\varphi_{\sigma}^{-1} \, \varphi_{\tau}\), \(A=\mathcal{O}(U)\) and \(B=\mathcal{O}(G)\) for ease of notation. We write \[\varphi= (\text{pr}_1, \, \mu)\] where
		\begin{equation*}
			\text{pr}_1: U \times G \ \rar \ U 
		\end{equation*}
		is the first projection and
		\begin{equation*}
			\mu: U \times G \ \rar \ G 
		\end{equation*}
		is the following composition 
		\begin{equation*}
			U \times G \ \stackrel{g_{\sigma \, \tau} \times \text{ id}}{\longrightarrow} \ G \times G \ \stackrel{m}{\rar} \ G.
		\end{equation*}
		Thus, at the level of rings, we have the following maps
		\begin{equation}\label{ring}
			\begin{split}
				& \text{pr}_1^{\sharp} : A \rar A \otimes_{\mathbb{K}} B \text{ given by } \alpha \mapsto \alpha \otimes 1,\\
				& \mu^{\sharp}: B \rar A \otimes_{\mathbb{K}} B \text{ given by } S_{p \, q } \mapsto \sum_k g_{\sigma \,  \tau}^{\sharp}(S_{p \, k}) \otimes S_{k \, q} \text{ and}\\
				& \varphi^{\sharp}: A \otimes_{\mathbb{K}} B \rar A \otimes_{\mathbb{K}} B \text{ given by } \alpha \otimes \beta \mapsto \text{pr}_1^{\sharp}(\alpha) \mu^{\sharp}(\beta).
			\end{split}
		\end{equation}
		Thus, we get the map
		\begin{equation*}
			\begin{split}
				\widehat{\varphi}:\text{Der}_{\mathbb{K}}(A \otimes_{\mathbb{K}} B) \rar \text{Der}_{\mathbb{K}}(A \otimes_{\mathbb{K}} B) \text { given by } \delta \mapsto \delta \circ \varphi^{\sharp}.
			\end{split}
		\end{equation*}
		Since \(\varphi\) and hence \(\widehat{\varphi}\) is \(G\)-equivariant, it induces a map of invariants
		\begin{equation}\label{obs2}
			\widehat{\varphi}:\text{Der}_{\mathbb{K}}(A \otimes_{\mathbb{K}} B)^G \ \rar \ \text{Der}_{\mathbb{K}}(A \otimes_{\mathbb{K}} B)^G. 
		\end{equation}
		We have an isomorphism 
		\begin{equation*}
			\begin{split}
				\text{Der}_{\mathbb{K}}(A \otimes_{\mathbb{K}} B)^G &\cong \text{Der}_{\mathbb{K}}(A, \, A \otimes_{\mathbb{K}} B)^G \oplus \text{Der}_{\mathbb{K}}(B, \, A \otimes_{\mathbb{K}} B)^G  \text{given by }\\
				\delta &\mapsto \delta_1 \oplus \delta_2, \text{ where }\delta_1(\alpha)=\delta(\alpha \otimes 1) \text{ and }\delta_2(\beta)= \delta (1 \otimes \beta).
			\end{split}
		\end{equation*}
		The inverse map is given as follows. Given $\delta_1 \in \text{Der}_{\mathbb{K}}(A, \, A \otimes_{\mathbb{K}} B)^G$ and $\delta_2 \in \text{Der}_{\mathbb{K}}(B, \, A \otimes_{\mathbb{K}} B)^G$, we define $\delta \in 	\text{Der}_{\mathbb{K}}(A \otimes_{\mathbb{K}} B)^G$ using the Leibniz rule
		\begin{equation*}
			\delta(\alpha \otimes \beta)=\delta_1(\alpha) \, \beta + \alpha \, \delta_2(\beta),
		\end{equation*}
		where $\alpha \in A$ and $\beta \in B$. Let \(\delta \in \mathscr{T}_{U}( - \text{log }D)(U)  \subseteq \text{Der}_{\mathbb{K}}(A)\) and write \(\widehat{\varphi}(\delta)=\delta_1 \oplus \delta_2\). We wish to determine \(\delta_1\) and $\delta_2$. From \eqref{ring} and \eqref{obs2}, we have
		\begin{equation*}
			\begin{split}
				\delta_1(\alpha)=\widehat{\varphi}(\delta)(\alpha \otimes 1)= (\delta \circ \varphi^{\sharp}) (\alpha \otimes 1)= (\delta \circ \text{pr}_1^{\sharp})(\alpha)=  \delta(\alpha \otimes 1)= \delta (\alpha) \otimes 1. 
			\end{split}
		\end{equation*}
		This shows that 
		\begin{equation*} \label{obs3}
			\delta_1=\delta \in \mathscr{T}_{U}( - \text{log }D)(U). 
		\end{equation*} 
		On the other hand, note that 
		\begin{equation}
			\begin{split}
				\delta_2(S_{p \, q})=\widehat{\varphi}(\delta)(1 \otimes S_{p \, q})= (\delta \circ \varphi^{\sharp}) (1 \otimes S_{p \, q})= \delta ( \mu^{\sharp}(S_{p \, q})).
			\end{split}
		\end{equation}
		Observe that
		\begin{equation}\label{matrix}
			\begin{split}
				\text{Der}_{\mathbb{K}}(B, \, A \otimes_{\mathbb{K}} B) ^G &\cong (\text{Der}_{\mathbb{K}}(B) \otimes_B (A \otimes_{\mathbb{K}} B)) ^G \, (\text{using the universal property of derivations.}) \\
				&\cong  (\text{Der}_{\mathbb{K}}(B) \otimes_{\mathbb{K}} A) ^G \cong \text{Der}_{\mathbb{K}}(B)^G \otimes _{\mathbb{K}} A^G  \cong \mathfrak{g} \otimes_{\mathbb{K}} A \cong M(r, A).
			\end{split}
		\end{equation}
		Thus, \(\delta_2 \in \text{Der}_{\mathbb{K}}(B, \, A \otimes_{\mathbb{K}} B) ^G \) can be identified with a matrix \((f_{p q})_{r \times r}\) with entries in \(A\).  Then using \eqref{isopart1} and \eqref{obs3}together with Lemma \ref{matrixfpq} to follow, we have 
		\begin{equation}\label{obs5}
			\widehat{\varphi}_{\sigma}^{-1}(a_{\sigma\tau} (\delta))= \delta_2=\left( d(C_{\sigma \, \tau}) C_{\tau \, \sigma} \right) (\delta).
		\end{equation}
		Hence the Proposition \ref{cocycleAP} follows.
	\end{proof}

	\begin{lemma}\label{matrixfpq}
		With the notations as above, for \(\delta \in \mathscr{T}_U(-\text{log } D)(U)\) we have 			
		$$(f_{p q})_{r \times r}=\left( d(C_{\sigma \, \tau}) C_{\tau \, \sigma} \right) (\delta).$$ 
	\end{lemma}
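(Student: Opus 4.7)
The plan is a direct computation: evaluate $\delta_{2}(S_{pq}) \in A \otimes_{\mathbb{K}} B$ explicitly using the definition of $\widehat{\varphi}$ and match the result against the matrix form prescribed by the chain of isomorphisms \eqref{matrix}. First I would compute $\mu^{\sharp}(S_{pq})$. Writing $\mu$ as the composition $U \times G \xrightarrow{g_{\sigma\,\tau} \times \text{id}} G \times G \xrightarrow{m} G$ and using the standard coproduct formula $m^{\sharp}(S_{pq}) = \sum_{k} S_{pk} \otimes S_{kq}$ on $\mathcal{O}(GL(r, \mathbb{K}))$, I obtain $\mu^{\sharp}(S_{pq}) = \sum_{k} c^{\sigma \, \tau}_{pk} \otimes S_{kq}$.

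Next, unwinding the definition of $\widehat{\varphi}$ as the transport of derivations along $\varphi$ and using that $\delta \in \text{Der}_{\mathbb{K}}(A)$ acts trivially on the $B$-factor, I compute $\delta(\mu^{\sharp}(S_{pq})) = \sum_{k} \delta(c^{\sigma \, \tau}_{pk}) \otimes S_{kq}$. Transporting back via $(\varphi^{-1})^{\sharp}$, whose action on $1 \otimes S_{kq}$ is given by the analogous coproduct computation but with the inverse transition matrix $C_{\tau \, \sigma}$, yields the explicit expression $\delta_{2}(S_{pq}) = \sum_{l} \bigl(\sum_{k} \delta(c^{\sigma \, \tau}_{pk}) \, c^{\tau \, \sigma}_{kl}\bigr) \otimes S_{lq}$.

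To identify this with a matrix, I trace through the isomorphisms in \eqref{matrix}. The $G$-invariance forces $\delta_{2}$ to be determined by its value $\widehat{\delta}_{2} : B \to A$ obtained by evaluating the $B$-factor at $1_{G}$, which is a derivation at the identity with values in $A$. Relative to the basis of $T_{1_{G}}G$ dual to $\{S_{ij}\}$ and the resulting identification $\mathfrak{g} \otimes_{\mathbb{K}} A \cong M(r, A)$, a matrix $(f_{pq}) \in M(r, A)$ corresponds under \eqref{matrix} to the derivation whose expansion on $S_{pq}$ takes the form $\delta_{2}(S_{pq}) = \sum_{l} f_{pl} \otimes S_{lq}$. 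Matching with the expression above yields $f_{pl} = \sum_{k} \delta(c^{\sigma \, \tau}_{pk}) \, c^{\tau \, \sigma}_{kl}$, which is exactly the $(p,l)$-entry of $d(C_{\sigma \, \tau}) C_{\tau \, \sigma}$ applied to $\delta$. The main obstacle is pinning down the conventions in the chain \eqref{matrix}, in particular the identification $\text{Der}_{\mathbb{K}}(B)^{G} \cong \mathfrak{g}$ (which, under the paper's $G$-action, picks out the right-invariant derivations) and the verification that the matrix correspondence takes exactly the stated form $\delta_{2}(S_{pq}) = \sum_{l} f_{pl} \otimes S_{lq}$ rather than some transposed or inverse-twisted variant.
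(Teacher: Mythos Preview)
Your approach is correct and takes a more direct route than the paper's. The paper proceeds pointwise: it fixes $x \in U$, forms the composition
\[
B_{1_G} \xrightarrow{(\cdot\, g_{\sigma\tau}(x)^{-1})^{\sharp}} B_{g_{\sigma\tau}(x)} \xrightarrow{(\mu^{\sharp})_{(x,1)}} (A \otimes B)_{(x,1)} \xrightarrow{(\iota^{\sharp})_x} A_x \xrightarrow{\delta_x} \mathbb{K},
\]
obtains a point derivation $\tilde{\delta}_x \in \mathfrak{g}$, and reads off $f_{pq}(x)$ as the coefficient of the basis vector $\omega_{pq}$. The right translation in the first arrow is exactly what plays the role of your back-transport $(\varphi^{-1})^{\sharp}$. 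You instead expand $\delta_2(S_{pq})$ globally in $A \otimes B$ and match it against the canonical form $\sum_l f_{pl} \otimes S_{lq}$, which bypasses the point-by-point evaluation and the translation step altogether; this is cleaner, at the cost of having to pin down the identification in \eqref{matrix} carefully.

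On that last point, your caution is justified but the verification goes through. Under the paper's right $G$-action, the invariants in $\text{Der}_{\mathbb{K}}(B)^G$ are the \emph{right}-invariant vector fields on $G$ (the paper's phrase ``left invariant'' is loose here). For $G = GL(r,\mathbb{K})$ the right-invariant field $\partial_{kl}$ with value $E_{kl}$ at $1_G$ satisfies $\partial_{kl}(S_{pq}) = \delta_{pk}\, S_{lq}$, so after tensoring with $A$ a matrix $(f_{pq}) \in M(r,A)$ corresponds precisely to the derivation with $\delta_2(S_{pq}) = \sum_l f_{pl} \otimes S_{lq}$, with no transpose or twist. Note also that the paper's displayed formula $\widehat{\varphi}(\delta) = \delta \circ \varphi^{\sharp}$ is not literally a derivation; the full transport $(\varphi^{-1})^{\sharp} \circ \delta \circ \varphi^{\sharp}$ that you use is what is actually meant, and the paper's pointwise argument silently restores the missing factor through the translation by $g_{\sigma\tau}(x)^{-1}$.
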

	
	\begin{proof}
		
		Fix a basis \(\{\omega_{k \, l} ~|~ 1 \leq k, l \leq r\}\) of $\mathfrak{g}=\text{Der}_{\mathbb{K}}(B_{1_G}, \mathbb{K})$, where \(B_{1_G}\) denote the local ring at the identity and for any \(f \in B_{1_G}\),
		\begin{equation}\label{basis}
			\begin{split}
				\omega_{k \, l}(f)= \frac{\partial}{\partial S_{k \, l}}(f)(1_G).
			\end{split}
		\end{equation}  Take \(x \in U\). We will also denote by \(x\) the corresponding maximal ideal in \(A\). Note that the map 
		\begin{equation*}
			\begin{split}
				\iota : U \rightarrow U \times G, \,  x \mapsto (x,1_G)
			\end{split}
		\end{equation*}
		induces an isomorphism \[U \cong (U \times G)/G.\] 
		We get a point derivation on \(G\) using the following composition
		\begin{equation}\label{compo.p21}
			B_{1_G} \stackrel{( \cdot g_{\sigma \, \tau}(x)^{-1})^{\sharp}}{\longrightarrow} B_{g_{\sigma \, \tau}(x)} \stackrel{(\mu^{\sharp})_{(x,1)}}\longrightarrow (A \otimes B)_{(x,1)} \stackrel{(\iota^{\sharp})_x} \longrightarrow A_x \stackrel{\delta_x}\longrightarrow \mathbb{K},
		\end{equation}
		where the first map is the ring map corresponding to the translation map of \(G\) by \(g_{\sigma \, \tau}(x)^{-1}\) and the last map is given by \(\frac{a}{c} \mapsto \frac{(c \, \delta(a)+a \, \delta(c))(x)}{c(x)^2}\). The composite in \eqref{compo.p21}, say $\tilde{\delta}_x$ gives rise to an element of $\mathfrak{g}$, hence we can write 
		\begin{equation*}
			\tilde{\delta}_x= \sum f_{pq}(x) \, \omega_{pq}.
		\end{equation*}
		
		Applying \(S_{pq}\) on both sides of the above equation, and using \eqref{basis}, we have 
		\begin{equation}\label{coordinates}
			f_{pq}(x)=\tilde{\delta}_x(S_{p \, q})=\delta_x \circ (\iota^{\sharp})_x \circ (\mu^{\sharp})_{(x,1_G)}(S_{pq} \cdot g_{\sigma \, \tau}(x)).
		\end{equation}
		Now, for any \(h \in G\), we have that 
		\begin{equation*}
			\begin{split}
				&	(S_{pq} \cdot g_{\sigma \, \tau}(x))(h)= S_{pq} (h \cdot g_{\sigma \, \tau}(x)^{-1})= S_{pq} (h \cdot g_{\tau \, \sigma}(x))\\
				&= \sum_k h_{pk} \, (g_{\tau \, \sigma}(x))_{kq}=\sum_k S_{pk}(h) \, S_{kq}(g_{\tau \, \sigma}(x))=\sum_k S_{pk}(h) \, g_{\tau \, \sigma}^{\sharp}(S_{kq})(x).
			\end{split}
		\end{equation*}
		This shows that
		\begin{equation*}
			S_{pq} \cdot g_{\sigma \, \tau}(x)= \sum_k S_{pk} \, g_{\tau \, \sigma}^{\sharp}(S_{kq})(x).
		\end{equation*}
		From \eqref{coordinates}, we have that 
		\begin{equation*}
			\begin{split}
				f_{pq}(x)&=\delta_x \circ (\iota^{\sharp})_x \circ (\mu^{\sharp})_{(x,1_G)}\left( \sum_k S_{pk} \, g_{\tau \, \sigma}^{\sharp}(S_{kq})(x)\right) \\
				&=\delta_x \circ (\iota^{\sharp})_x\left( \sum_k \left[ \sum_l g_{\sigma \, \tau}^{\sharp}(S_{pl}) \otimes S_{l \, k}\right]  \, g_{\tau \, \sigma}^{\sharp}(S_{kq})(x)\right) ~ ~ (\text{by } \eqref{obs5}) \\
				&=\delta_x \left( \sum_k \left[ \sum_l g_{\sigma \, \tau}^{\sharp}(S_{pl}) \otimes S_{l \, k}(1_G)\right]  \, g_{\tau \, \sigma}^{\sharp}(S_{kq})(x)\right) \\
				&=\delta_x \left( \sum_k g_{\sigma \, \tau}^{\sharp}(S_{pk})  \, g_{\tau \, \sigma}^{\sharp}(S_{kq})(x)\right) ~ (\text{since } \mu \circ \iota=g_{\sigma \, \tau})\\
				&=\sum_k \delta(g_{\sigma \, \tau}^{\sharp}(S_{pk}))(x) \, g_{\tau \, \sigma}^{\sharp}(S_{kq})(x).
			\end{split}
		\end{equation*}
		Thus, we have \(f_{pq}=\sum_k \delta(g_{\sigma \, \tau}^{\sharp}(S_{pk})) \, g_{\tau \, \sigma}^{\sharp}(S_{kq}).\) Now, since \(\Omega_X^1(\text{log }D)\) is dual to  \( \mathscr{T}_X(- \text{log } D)\), we have
		\begin{equation*}
			\delta(g_{\sigma \, \tau}^{\sharp}(S_{pk}))=dg_{\sigma \, \tau}^{\sharp}(S_{pk})(\delta).
		\end{equation*}
		Thus using \eqref{transition_matrix}, for \(\delta \in \mathscr{T}_U(-\text{log } D)(U)\), we get a matrix with entries in \(A\), given by 
		\begin{equation*} 
			(f_{p q})_{r \times r}=\left( d(C_{\sigma \, \tau}) C_{\tau \, \sigma} \right) (\delta).
	\end{equation*} \end{proof}

	\begin{proof}[Proof of Theorem \ref{a=b}]
		The statement of Theorem \ref{a=b} is now an immediate consequence of Proposition \ref{cocyclebP} and Proposition \ref{cocycleAP}.
	\end{proof}

	\begin{cor}\label{equivalence}
		With the above notations, the principal bundle \(\mathcal{P} \) admits a logarithmic connection singular over \(D\) if and only if  there is a \(\mathbb{K}\)-linear sheaf homomorphism 
		\begin{equation*}
			\nabla : E \longrightarrow E \otimes_{\mathcal{O}_X} \Omega^1_X(\text{log } D)
		\end{equation*}
		satisfying the Leibniz rule 
		\begin{equation*}
			\nabla(f \, s )=f \, \nabla(s) + s \otimes df,
		\end{equation*}
		for any open subset \(U\) of \(X\) and all sections \(f \in \mathcal{O}_X(U) \) and \(s \in \Gamma(U, E)\).
	\end{cor}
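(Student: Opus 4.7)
The plan is to translate both sides of the equivalence into the vanishing of an extension class, and then invoke Theorem \ref{a=b} which has already identified the two classes up to sign.

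First I would show that a logarithmic connection on $\mathcal{P}$, which by Definition is an $\mathcal{O}_X$-linear splitting $\zeta$ of the sequence \eqref{log_at}, exists if and only if the logarithmic Atiyah class $a(\mathcal{P}) \in H^1(X, \Hom(\mathscr{T}_X(-\text{log }D), \text{ad}(\mathcal{P})))$ vanishes. This is the standard extension-class criterion: the sheaf $\text{ad}(\mathcal{P})$ is locally free, $\mathscr{T}_X(-\text{log }D)$ is locally free under our hypothesis, and the splittings of such a short exact sequence of $\mathcal{O}_X$-modules form a torsor under $\text{Hom}(\mathscr{T}_X(-\text{log }D), \text{ad}(\mathcal{P}))$, which is non-empty precisely when $a(\mathcal{P}) = 0$.

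Next I would show that the existence of a $\mathbb{K}$-linear $\nabla : \mathcal{E} \to \mathcal{E} \otimes_{\mathcal{O}_X} \Omega^1_X(\text{log }D)$ satisfying the Leibniz rule is equivalent to the existence of an $\mathcal{O}_X$-linear splitting of the sequence \eqref{dfun1}, and hence to the vanishing of $b(E)$. For this direction I would exhibit the explicit bijection: given a splitting $\sigma: \mathcal{E} \to D(\mathcal{E})$ with $p_1 \circ \sigma = \text{id}$, I can write $\sigma(s) = s \oplus \nabla(s)$ for a unique $\mathbb{K}$-linear map $\nabla$; then $\mathcal{O}_X$-linearity of $\sigma$, combined with the twisted module structure $f \cdot (s \oplus \gamma) = fs \oplus (f\gamma + s \otimes df)$, is precisely the Leibniz rule for $\nabla$. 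Conversely, any $\nabla$ satisfying the Leibniz rule gives a splitting by $s \mapsto s \oplus \nabla(s)$.

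Finally, combining the two equivalences with Theorem \ref{a=b}, which asserts $a(\mathcal{P}) = -b(E)$ under the canonical isomorphism $\Hom(\mathcal{E}, \mathcal{E} \otimes_{\mathcal{O}_X} \Omega^1_X(\text{log }D)) \cong \Hom(\mathscr{T}_X(-\text{log }D), \text{ad}(\mathcal{P}))$, yields the corollary, since $a(\mathcal{P}) = 0$ if and only if $b(E) = 0$.

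The only real subtlety — and the step I would expect to need the most care — is the second one: verifying that the Leibniz rule is exactly the obstruction to the decomposition $s \mapsto s \oplus \nabla(s)$ being $\mathcal{O}_X$-linear, taking into account that $df$ is regarded as a section of $\Omega^1_X(\text{log }D)$ via the canonical map $\Omega^1_X \to (\Omega^1_X)^{\vee\vee} \to \Omega^1_X(\text{log }D)$, which is defined on the smooth locus and extends by reflexivity. Once this bookkeeping is done carefully, the rest of the argument is formal.
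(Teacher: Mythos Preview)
Your proposal is correct and follows exactly the intended approach: the paper states this corollary without an explicit proof, treating it as an immediate consequence of Theorem~\ref{a=b} via the standard identification of splittings of \eqref{log_at} and \eqref{dfun1} with the vanishing of $a(\mathcal{P})$ and $b(E)$, respectively. Your write-up in fact supplies the details (the bijection between Leibniz operators $\nabla$ and $\mathcal{O}_X$-linear splittings of \eqref{dfun1}) that the paper leaves to the reader.
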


 The following proposition shows that there is a natural induced connection  on the vector bundle \(E\) given a connection on its associated principal bundle \(\mathcal{P} \).

	\begin{prop}\label{equivalence1} Let \(X\) be a normal projective variety and \(D\) be a reduced Weil divisor such that the logarithmic cotangent sheaf $\Omega^1_X(\text{log } D)$ is locally free. Let \(\pi : E \rightarrow X\) be a vector bundle of rank \(r\) on \(X\). Let $p : \mathcal{P}\rightarrow X$ be the associated principal \(GL(r, \mathbb{K})\)-bundle.
		If the principal bundle \(\mathcal{P} \) admits a logarithmic connection singular over \(D\), then we get a \(\mathbb{K}\)-linear sheaf homomorphism 
		\begin{equation*}
			\nabla : E \longrightarrow E \otimes_{\mathcal{O}_X} \Omega^1_X(\text{log } D)
		\end{equation*}
		induced from the connection on its frame bundle. The homomorphism satisfies the Leibniz rule 
		\begin{equation*}
			\nabla(f \, s )=f \, \nabla(s) + s \otimes df,
		\end{equation*}
		for any open subset \(U\) of \(X\) and all sections \(f \in \mathcal{O}_X(U) \) and \(s \in \Gamma(U, E)\).
	\end{prop}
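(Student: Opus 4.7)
The plan is to construct $\nabla$ directly from the splitting $\zeta: \mathscr{T}_X(-\text{log } D) \to \mathcal{A}t(\mathcal{P})(-\text{log } D)$ of the logarithmic Atiyah sequence \eqref{log_at} afforded by the hypothesized logarithmic connection on $\mathcal{P}$. The key ingredient is the standard description of the associated vector bundle $E = \mathcal{P} \times^G \mathbb{K}^r$ (with $G = GL(r, \mathbb{K})$ acting on $\mathbb{K}^r$ via the standard representation): over any open $U \subset X$, a section $s \in \Gamma(U, E)$ corresponds bijectively to a $G$-equivariant morphism $\tilde{s}: p^{-1}(U) \to \mathbb{K}^r$, equivalently to a $G$-invariant element of $\mathcal{O}_\mathcal{P}(p^{-1}(U)) \otimes_{\mathbb{K}} \mathbb{K}^r$ under the diagonal $G$-action.

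For a local section $\delta$ of $\mathscr{T}_X(-\text{log } D)$ over $U$, the derivation $\zeta(\delta)$ lies in $\text{Der}_{\mathbb{K}}(\mathcal{O}_\mathcal{P}(p^{-1}(U)))^G$ by construction. I extend it componentwise to $\mathcal{O}_\mathcal{P}(p^{-1}(U)) \otimes_{\mathbb{K}} \mathbb{K}^r$, acting trivially on the second factor; a direct calculation (using the $G$-invariance of $\zeta(\delta)$ and the compatibility with the representation matrix coefficients, which are scalars) shows that this extension commutes with the diagonal $G$-action and hence preserves the $G$-invariant subspace. I then define $(\nabla s)(\delta) \in \Gamma(U, E)$ to be the section corresponding to the $G$-invariant element $\zeta(\delta)(\tilde{s})$. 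Since $\zeta$ is $\mathcal{O}_X$-linear, $\zeta(f \delta)$ equals $f \cdot \zeta(\delta)$ in the Atiyah sheaf, which as a derivation acts by $(f \cdot \zeta(\delta))(\tilde{h}) = p^\sharp(f) \cdot \zeta(\delta)(\tilde{h})$; consequently $(\nabla s)(f\delta) = f \cdot (\nabla s)(\delta)$. Using the hypothesis that $\Omega^1_X(\text{log } D)$ is locally free and the resulting isomorphism $E \otimes_{\mathcal{O}_X} \Omega^1_X(\text{log } D) \cong \Hom_{\mathcal{O}_X}(\mathscr{T}_X(-\text{log } D), E)$, these assignments assemble into a well-defined $\mathbb{K}$-linear morphism of sheaves $\nabla: E \to E \otimes_{\mathcal{O}_X} \Omega^1_X(\text{log } D)$.

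For the Leibniz rule, fix $f \in \mathcal{O}_X(U)$ and $s \in \Gamma(U, E)$; the section $fs$ corresponds to the equivariant function $p^\sharp(f) \cdot \tilde{s}$. Applying the derivation property of $\zeta(\delta)$ gives
\begin{equation*}
\zeta(\delta)\bigl(p^\sharp(f) \cdot \tilde{s}\bigr) \;=\; \zeta(\delta)\bigl(p^\sharp(f)\bigr) \cdot \tilde{s} \;+\; p^\sharp(f) \cdot \zeta(\delta)(\tilde{s}).
\end{equation*}
By the defining relation \eqref{log_con1} for $\eta$ together with the splitting condition $\bar{\eta} \circ \zeta = \text{id}_{\mathscr{T}_X(-\text{log } D)}$, the first summand on the right equals $p^\sharp(\delta(f)) \cdot \tilde{s}$, which under the equivariant-function correspondence translates to the section $\delta(f) \cdot s = (s \otimes df)(\delta)$ of $E$, with $df \in \Omega^1_X(\text{log } D)$ understood via the composition $\Omega^1_X \to (\Omega^1_X)^{\vee\vee} \to \Omega^1_X(\text{log } D)$ used in \eqref{dfun1}. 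Reading off the identity at every $\delta$ yields $\nabla(fs) = f \nabla(s) + s \otimes df$, as required.

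The main point requiring care is the sheaf-theoretic formalization of the correspondence between sections of $E$ and $G$-invariant tensors in $\mathcal{O}_\mathcal{P} \otimes_{\mathbb{K}} \mathbb{K}^r$, and the verification that a $G$-invariant derivation extended componentwise commutes with the diagonal action. Once these bookkeeping items are in place, the construction and the Leibniz rule are essentially forced by the splitting property of $\zeta$ and the derivation property of $\zeta(\delta)$, leaving no substantial further obstacle.
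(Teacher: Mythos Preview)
Your proposal is correct and follows essentially the same route as the paper: the paper's canonical trivialization $\Phi: p^*E \to \mathcal{O}_{\mathcal{P}}^{\oplus r}$ is exactly the implementation of your correspondence between sections of $E$ and $G$-invariant elements of $\mathcal{O}_{\mathcal{P}} \otimes_{\mathbb{K}} \mathbb{K}^r$, and both arguments then apply $\zeta(\delta)$ componentwise, use $G$-invariance to descend, invoke the splitting condition $\bar{\eta}\circ\zeta=\mathrm{id}$ to identify $\zeta(\delta)(p^\sharp f)$ with $p^\sharp(\delta f)$, and pass through $\Hom_{\mathcal{O}_X}(\mathscr{T}_X(-\text{log }D),E)\cong E\otimes\Omega^1_X(\text{log }D)$. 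The only difference is packaging: the paper spells out the $G$-equivariance of $\Phi$ explicitly, whereas you defer this to the ``bookkeeping'' remark.
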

	
	\begin{proof} 
		Let \(\zeta\) be a logarithmic connection of \(\mathcal{P}\) singular over \(D\) as defined in \eqref{def-log-con}. The pullback sheaf \(p^* E\) is trivial, in fact, there is a canonical isomorphism 
		\begin{equation*}
			\Phi  : p^* E \longrightarrow \mathcal{O}_{\mathcal{P}}^{\oplus r}
		\end{equation*}
		of \(\mathcal{O}_{\mathcal{P}}\)-modules defined as follows (see \cite[proof of Proposition 2.1]{BMS}). The total space of the vector bundle \(p^* E\) is given as follows:
		\begin{equation*}
			p^* E = \{(e, y) \in \mathcal{P} \times E ~ | ~ p(e)=\pi(y)\}.
		\end{equation*}
		Let $\tilde{s}$ be a local section of \(p^* E\) over an open subset \(V \subset \mathcal{P}\). Let \(e \in V\) and set \(x=p(e)\). We can consider \(e=(e_1, \ldots, e_r) \) as an ordered basis of the fiber \(E(x)\). Then, we can write
		\begin{equation*}
			\tilde{s}(e)=(e, \tilde{f}_1(e)e_1 + \ldots + \tilde{f}_r(e)e_r),
		\end{equation*}
		where \(\tilde{f}_1, \ldots, \tilde{f}_r \in \mathcal{O}_{\mathcal{P}}(V)\). Then, we have
		\begin{equation*}
			\Phi(\tilde{s})=(\tilde{f}_1, \ldots, \tilde{f}_r).
		\end{equation*}
		Set \(G=GL(r, \mathbb{K})\). Then \(G\) acts on the right of \(p^* E\)  by \( (e,y) \cdot g=(eg, y)\). Let \(V \subset \mathcal{P}\) be a \(G\)-invariant open subset, then there is an action of \(G\) on the space of sections $\Gamma(V, p^* E)$. Let  $\tilde{s}$ be a local section of \(p^* E\) over an open subset \(V \) and \(g \in G\), then define
		\begin{equation*}
			\begin{split}
				(\tilde{s} \cdot g)(e)=\tilde{s}(e \cdot g^{-1}) \cdot g,
			\end{split}
		\end{equation*}
		where \(e \in V\). Next, we observe that $\Phi$ is \(G\)-equivariant. Note that
		\begin{equation*}
			\begin{split}
				(\tilde{s} \cdot g)(e)=(e \cdot g, \tilde{f}_1(e \cdot g^{-1})e_1 + \ldots + \tilde{f}_r(e \cdot g^{-1} )e_r).
			\end{split}
		\end{equation*}
		Hence, \(\Phi(\tilde{s} \cdot g)=(\tilde{f}_1 \cdot g, \ldots, \tilde{f}_r \cdot g).=\Phi(\tilde{s}) \cdot g\).
		
		Next, we define a map 
		\begin{equation*}
			\widetilde{\nabla} : E \otimes_{\mathbb{K}} \mathscr{T}_X(-\text{log } D) \longrightarrow E
		\end{equation*}
		as follows: Let \(s\) be a local section of \(E\) over an affine open subset \(U=\text{Spec}(B) \subset X\). Let \(p^{-1}(U)=\text{Spec}(A) \subset \mathcal{P}\). Then \((p^*E)(p^{-1}(U))=E(U) \otimes_B A\). So \(s \otimes 1\) gives a local section of \(p^*E\) over the affine open set  \(p^{-1}(U)=\text{Spec}(A)\). Let 
		\begin{equation}\label{img under Phi}
			\Phi(s \otimes 1)=(\tilde{f}_1, \ldots, \tilde{f}_r) \in \mathcal{O}_{\mathcal{P}}^{\oplus r}.
		\end{equation}
		Let $\delta$ be a local section of \(\mathscr{T}_X(-\text{log } D) \) over \(U\). Then set 
		\begin{equation}\label{nabla1}
			\widetilde{\nabla}(s \otimes \delta)=\Phi^{-1}((\zeta(\delta)(\tilde{f}_1), \ldots, \zeta(\delta)(\tilde{f}_r))).
		\end{equation}
		Note that since \(s \otimes 1 \) is \(G\)-invariant and $\Phi$ is \(G\)-equivariant, in fact we have 
		\begin{equation*}
			\Phi^{-1}((\zeta(\delta)(g_1), \ldots, \zeta(\delta)(g_r))) \in (E(U) \otimes_B A)^G=E(U) \otimes_B A^G=E(U),
		\end{equation*} where the last equality uses the isomorphism \(p_*:A^G \, \rar \, B\). Let \(c \in \mathbb{K} \), \(s \in E(U)\) and \(f \in \mathcal{O}_X(U)\) for any open set \(U \subset X\). Since, the map $\Phi$ is $\mathcal{O}_{\mathcal{P}}$-linear and the connection $\zeta$ is $\mathcal{O}_X$-linear, we have
  \begin{equation}\label{cU-linear}
      \widetilde{\nabla}(cs \otimes \delta)=c \, \widetilde{\nabla}(s \otimes \delta)  \text{ and }  \widetilde{\nabla}(s \otimes f \delta)=f \, \widetilde{\nabla}(s \otimes \delta).
  \end{equation}
 Also, note that \[fs \otimes 1=(p^{\sharp}f) (s \otimes 1).\] Then, $$\Phi(fs \otimes 1)=p^{\sharp}f \Phi(s \otimes 1)=((p^{\sharp}f) \tilde{f}_1, \ldots, (p^{\sharp}f) \tilde{f}_r).$$ Thus, we have
		\begin{equation}\label{leib0}
			\begin{split}
				\widetilde{\nabla}(fs \otimes \delta) & =\Phi^{-1}((\zeta(\delta)((p^{\sharp}f) \tilde{f}_1), \ldots, \zeta(\delta)((p^{\sharp}f ) \tilde{f}_r))).
			\end{split}
		\end{equation}
		Note that for \(i=1, \ldots, r\), we have
		\begin{equation}\label{leib2}
			\zeta(\delta)((p^{\sharp}f ) \tilde{f}_i)=(p^{\sharp}f ) \, \zeta(\delta)(\tilde{f}_i) + \tilde{f}_i \, \zeta(\delta)((p^{\sharp}f )).
		\end{equation}
		Also, observe that 
		\begin{equation*}
			\eta(\zeta(\delta))(f)=p_*(\zeta(\delta)(p^{\sharp}(f) )).
		\end{equation*}
		Since $\zeta$ is a connection, i.e. \(\eta \circ \zeta= \text{Id}_{\mathscr{T}_X(-\text{log }D)}\), we have 
		\begin{equation}\label{compat}
			p_*\left[ \zeta(\delta)(p^{\sharp}f ) \right]= \delta(f).
		\end{equation}
		Thus 	from \eqref{leib0}, we have
		\begin{equation}\label{leibniz1}
			\begin{split}
				\widetilde{\nabla}(fs \otimes \delta) & =\Phi^{-1} \left[ (p^{\sharp}f) (\zeta(\delta)(\tilde{f}_1), \ldots, \zeta(\delta)(\tilde{f}_r))+ \zeta(\delta)((p^{\sharp}f )) (\tilde{f}_1, \ldots, \tilde{f}_r) \right] \ (\text{by \eqref{leib2}}) \\
				&= (p^{\sharp}f) \Phi^{-1} ((\zeta(\delta)(\tilde{f}_1), \ldots, \zeta(\delta)(\tilde{f}_r))) + \zeta(\delta)((p^{\sharp}f )) \, \Phi^{-1}(\tilde{f}_1, \ldots, \tilde{f}_r)\\
				&= f \, \widetilde{\nabla}(s \otimes \delta) + \delta(f) s \ (\text{by \eqref{nabla1} and \eqref{compat}}).
			\end{split}
		\end{equation}

	 Now define 
		\begin{equation}\label{nabla2}
			\nabla : E \longrightarrow E \otimes_{\mathcal{O}_X} \Omega^1_X(\text{log } D)
		\end{equation}
  induced from the map $\widetilde{\nabla}$ as follows: Let $U$ be an open affine subset of $X$ and $s \in E(U)$. Consider the $\mathcal{O}_X(U)$-linear map 
  \begin{equation*}
      \widetilde{\nabla}^U_s ~:~ \mathscr{T}_X(-\text{log } D)(U) \rightarrow E(U), \text{ given by } \delta \mapsto \widetilde{\nabla}^U(s \otimes \delta).
  \end{equation*}
		Then define
  \[\nabla^U(s)=\Psi^U(\widetilde{\nabla}^U_s),\] where $\Psi : \Hom(\mathscr{T}_X(-\text{log } D), E) \cong \Omega^1_X(\text{log } D) \otimes E$ is the canonical isomorphism of sheaves. Using \eqref{cU-linear} and \eqref{leibniz1}, it follows immediately that $\nabla$ is a $\mathbb{K}$-linear map satisfying the Leibniz rule.
	\end{proof}

     	\section{Logarithmic connection on associated bundles}\label{sec 4.1}
      In this section, we deal with logarithmic connections induced on associated bundles of a principal bundle. This has been studied for holomorphic principal bundles on compact Riemann surfaces and smooth projective varieties over complex numbers in \cite{BDPS} and \cite{GP}, respectively. Let \(X\) be a normal projective variety and $\phi: H \rightarrow G$ be a homomorphism between reductive algebraic groups \(H\) and \(G\). Let $p : \mathcal{P}_H \rightarrow X$ be a  principal \(H\)-bundle. The associated principal \(G\)-bundle \(\mathcal{P}_G := \mathcal{P}_H \times^H  G\) is constructed as the quotient \[\left( \mathcal{P}_H \times  G \right)/ \sim \] where \[(e,g) \sim (eh,\phi(h)^{-1}g) \text{ for all } h \in H.\] The projection map \[p': \mathcal{P}_G  \rightarrow X \text{ is given by }  p'([e,g])=p(e).\] 
	We have an \(H\)-equivariant morphism 
 \begin{equation*}
     \hat{\phi} : \mathcal{P}_H \rightarrow \mathcal{P}_G, \text{ given by } \hat{\phi}(e)=[e, 1_G],
 \end{equation*}
which satisfies the following commutative diagram
\begin{equation}\label{esg1}
\begin{tikzpicture}
\def\a{1.5} \def\b{2}
\path
(-\a,0) node (A) {$\mathcal{P}_H$}      
(\a,0) node (B) {$\mathcal{P}_G$}
(0,-\b) node[align=center] (C) {$X$};
\begin{scope}[nodes={midway,scale=.75}]
\draw[->] (A)--(B) node[above]{$\hat{\phi}$};
\draw[->] (A)--(C) node[left]{$p$};
\draw[->] (B)--(C) node[right]{$p'$};
\end{scope}
\end{tikzpicture} 
 .\end{equation}
 The following proposition is an analogous version of \cite[Proposition 3.3.1]{GP} in the set-up of singular varieties. 

\begin{prop}\label{inducedconn4.1}
With the notation as above, a logarithmic connection on the principal \(H\)-bundle $\mathcal{P}_H$ induces a logarithmic connection on the associated principal $G$-bundle $\mathcal{P}_G$.
\end{prop}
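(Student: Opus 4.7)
The plan is to produce a morphism of short exact sequences between the logarithmic Atiyah sequence of $\mathcal{P}_H$ and that of $\mathcal{P}_G$ whose right-hand arrow is the identity on $\mathscr{T}_X(-\text{log }D)$. Once such a morphism is available, given a logarithmic connection $\zeta_H : \mathscr{T}_X(-\text{log }D) \to \mathcal{A}t(\mathcal{P}_H)(-\text{log }D)$ on $\mathcal{P}_H$, composing with the middle vertical arrow $\Phi$ yields $\zeta_G := \Phi \circ \zeta_H$ satisfying $\bar\eta_G \circ \zeta_G = \bar\eta_H \circ \zeta_H = \text{Id}_{\mathscr{T}_X(-\text{log }D)}$, which is exactly a logarithmic connection on $\mathcal{P}_G$.

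The main step is the construction of a natural $\mathcal{O}_X$-linear morphism $\Phi : \mathcal{A}t(\mathcal{P}_H) \to \mathcal{A}t(\mathcal{P}_G)$. Over an affine open $U \subset X$, I would take $\tilde{\delta} \in \text{Der}_{\mathbb{K}}(\mathcal{O}(p^{-1}(U)))^H = \mathcal{A}t(\mathcal{P}_H)(U)$, and exploit the presentation $\mathcal{P}_G|_U = (\mathcal{P}_H|_U \times G)/H$ with the $H$-action given by $(e,g)\cdot h = (eh, \phi(h)^{-1}g)$. Extend $\tilde{\delta}$ to the derivation $\tilde{\delta}\otimes\text{id}$ on $\mathcal{O}(p^{-1}(U))\otimes_{\mathbb{K}}\mathcal{O}(G)$ which acts as $\tilde\delta$ on the first factor and trivially on the second. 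This extended derivation is invariant under both the twisted diagonal $H$-action and the right $G$-action on the second factor, because $\tilde{\delta}$ is $H$-invariant and both $H$ and $G$ act on $\mathcal{O}(G)$ by algebra automorphisms. By $G$-equivariant descent through the quotient $\mathcal{P}_H|_U \times G \to \mathcal{P}_G|_U$, it defines an element $\Phi(\tilde\delta)\in\text{Der}_{\mathbb{K}}(\mathcal{O}(p'^{-1}(U)))^G = \mathcal{A}t(\mathcal{P}_G)(U)$. The construction is functorial in $U$ and sheafifies to give $\Phi$.

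Next I would verify compatibility with the anchor maps: for $f\in\mathcal{O}_X(U)$, applying $\tilde\delta\otimes\text{id}$ to $p^{\sharp}(f)\otimes 1$ gives $\tilde\delta(p^{\sharp}(f))\otimes 1$, whose image in $\mathcal{O}(p'^{-1}(U))$ agrees, via \eqref{log_con1}, with $\eta_H(\tilde\delta)(f)\circ p'$. This yields $\eta_G\circ\Phi = \eta_H$, so in particular $\Phi$ sends $\mathcal{A}t(\mathcal{P}_H)(-\text{log }D) = \eta_H^{-1}(\mathscr{T}_X(-\text{log }D))$ into $\mathcal{A}t(\mathcal{P}_G)(-\text{log }D)$. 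On the kernels, $\Phi$ restricts under the identifications of Proposition \ref{atseqad} to the bundle map $\text{ad}(\mathcal{P}_H)\to\text{ad}(\mathcal{P}_G)$ associated to the $H$-equivariant linear map $d\phi : \mathfrak{h}\to\mathfrak{g}$. This last identification can be checked after pulling back along an étale trivializing cover, using Remark \ref{isotrivial}, where both adjoint sheaves become $\mathcal{O}_{\widetilde{X}}\otimes\mathfrak{h}$ and $\mathcal{O}_{\widetilde{X}}\otimes\mathfrak{g}$ and the morphism reduces to $\text{id}\otimes d\phi$.

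I expect the main technical obstacle to be the careful verification of the twisted $H$-invariance of $\tilde\delta\otimes\text{id}$ and the $G$-equivariant descent on the singular base $X$: the twist by $\phi$ has to be handled with care, and the descent must be stated at the level of sheaves of invariants rather than of fibers. These checks are essentially formal given the $H$-invariance of $\tilde\delta$ and the algebra-automorphism property of the actions, but they do need to be written out. After $\Phi$ is in place, the remainder of the argument is purely diagram-chasing: $\zeta_G := \Phi\circ\zeta_H$ is $\mathcal{O}_X$-linear, lands in $\mathcal{A}t(\mathcal{P}_G)(-\text{log }D)$, and splits $\bar\eta_G$ by the compatibility established above.
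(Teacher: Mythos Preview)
Your proposal is correct and follows the same overall strategy as the paper: build a morphism of (logarithmic) Atiyah sequences over the identity on $\mathscr{T}_X(-\text{log }D)$, then compose the given splitting with the middle vertical arrow. The difference lies in how the middle arrow $\Phi=\text{At}(\phi)$ is constructed. You work concretely: extend an $H$-invariant derivation on $\mathcal{P}_H|_U$ trivially to $\mathcal{P}_H|_U\times G$, check it is invariant for both the twisted $H$-action and the right $G$-action, and descend through the $H$-quotient $\mathcal{P}_H|_U\times G\to\mathcal{P}_G|_U$. The paper instead uses the $H$-equivariant map $\hat\phi:\mathcal{P}_H\to\mathcal{P}_G$, takes $d\hat\phi:\mathscr{T}_{\mathcal{P}_H}\to\hat\phi^*\mathscr{T}_{\mathcal{P}_G}$, applies the invariant pushforward $p_*^H$, and then identifies $(p_*\hat\phi^*\mathscr{T}_{\mathcal{P}_G})^H$ with $\mathcal{A}t(\mathcal{P}_G)$ by arguing that both arise as the descent of the same sheaf $\hat\phi^*\mathscr{T}_{\mathcal{P}_G}$ along $p$. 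Your hands-on construction avoids that sheaf identification and the abstract descent functor, at the cost of the explicit invariance checks you flag; the paper's route is more functorial and makes the compatibility $\eta'\circ\text{At}(\phi)=\eta$ fall out of the identity $p_*^H\,p^*=\text{Id}$ rather than a direct computation. Both are clean, and your version has the advantage of being self-contained at the level of derivations on affine opens.
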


\begin{proof}
Considering the maps between the tangent sheaves from \eqref{esg1}, we get the following commutative diagram
\begin{center}
    
\begin{tikzpicture}
\def\a{1.5} \def\b{2}
\path
(-\a,0) node (A) {$\mathscr{T}_{\mathcal{P}_H}$}      
(\a,0) node (B) {$ \hat{\phi}^* \mathscr{T}_{\mathcal{P}_G}$}
(0,-\b) node[align=center] (C) {$p^* \mathscr{T}_X= \hat{\phi}^* p'^* \mathscr{T}_X$};
\begin{scope}[nodes={midway,scale=.75}]
\draw[->] (A)--(B) node[above]{\(d \hat{\phi}\)};
\draw[->] (A)--(C) node[left]{\(d p\)};
\draw[->] (B)--(C) node[right]{\(\hat{\phi}^*(d p')\)};
\end{scope}
\end{tikzpicture},
\end{center}
where \(d p': \mathscr{T}_{\mathcal{P}_G} \rightarrow p'^* \mathscr{T}_X \). Since the action on $\mathcal{P}_H$ of the group \(H\) is free, the \(H\)-equivariant sheaves of the above diagram descend to sheaves on \(X\) (see \cite[Theorem 1.2]{Nevins}). Thus taking invariant pushforward \(p_*^H\) and noting that \(\mathcal{A}t(\mathcal{P}_H):=(p_*\mathscr{T}_{\mathcal{P}_H})^H\) and \((p_* p^* \mathscr{T}_{X})^H \cong \mathscr{T}_{X}\), we get the following commutative diagram

\begin{center}
    
\begin{tikzpicture}
\def\a{1.5} \def\b{2}
\path
(-\a,0) node (A) {$\mathcal{A}t(\mathcal{P}_H)$}      
(\a,0) node (B) {$(p_* \hat{\phi}^* \mathscr{T}_{\mathcal{P}_G})^H$}
(0,-\b) node[align=center] (C) {$\mathscr{T}_{X}$};
\begin{scope}[nodes={midway,scale=.75}]
\draw[->] (A)--(B) node[above]{$\text{At}(\phi)$};
\draw[->] (A)--(C) node[left]{$\eta$};
\draw[->] (B)--(C) node[right]{\(\zeta\)};
\end{scope}
\end{tikzpicture}. 
\end{center}
Now we show that \(\mathcal{A}t(\mathcal{P}_G)=(p_* \hat{\phi}^* \mathscr{T}_{\mathcal{P}_G})^H\) and \(\zeta = \eta'\), where \(\eta' : \mathcal{A}t(\mathcal{P}_G) \rightarrow \mathscr{T}_{X}\) is induced from the map \(d p'\) (see \eqref{surj}). Note that
\begin{equation*}
    \begin{split}
        & p^* \mathcal{A}t(\mathcal{P}_G)= \hat{\phi} ^* p'^* \mathcal{A}t(\mathcal{P}_G)= \hat{\phi} ^* \mathscr{T}_{\mathcal{P}_G} \text{ and }\\
        & p^*((p_* \hat{\phi}^* \mathscr{T}_{\mathcal{P}_G})^H)= \hat{\phi} ^* \mathscr{T}_{\mathcal{P}_G} ~ (\text{since } \hat{\phi} ^* \mathscr{T}_{\mathcal{P}_G} \text{ descends to } (p_* \hat{\phi}^* \mathscr{T}_{\mathcal{P}_G})^H).
    \end{split}
\end{equation*}
Thus both the sheaves \(\mathcal{A}t(\mathcal{P}_G)\) and \((p_* \hat{\phi}^* \mathscr{T}_{\mathcal{P}_G})^H\) are descends of the same sheaf. Hence, we have \(\mathcal{A}t(\mathcal{P}_G)=(p_* \hat{\phi}^* \mathscr{T}_{\mathcal{P}_G})^H\). Also, we have
\begin{equation*}
    \begin{split}
        p^*(\zeta) &= \hat{\phi}^*(d p') ~ (\text{since \(\hat{\phi}^*(d p')\) descends to the map \(\zeta\)})\\
        &=\hat{\phi}^* (p'^*(\eta') ~ (\text{since \(d p'\) descends to the map \(\eta\)})\\
        &= p^* (\eta') ~ (\text{using } p' \circ \hat{\phi}=p).
    \end{split}
\end{equation*}
Now using the fact that the functor \(p_*^H \, p^*\) is the identity functor (see \cite[Subsection 2.2]{Nevins}), we get \(\zeta=\eta'\). Thus, we get the following commutative diagram of the corresponding Atiyah sequences.

    	\begin{equation*}
	\xymatrixrowsep{1.8pc} \xymatrixcolsep{2.2pc}
	\xymatrix{
		0\ar@{->}[r]&
		\text{ad}(\mathcal{P}_H)\ar@{->}[r]^{\iota}\ar@{->}[d]^{\text{ad}(\phi)}& \mathcal{A}t(\mathcal{P}_H)\ar@{->}[d]^{\text{At}(\phi)}\ar@{->}[r]^{\eta}&\mathscr{T}_X\ar@{->}[d]_{Id}\ar@{->}[d]\ar@{->}[r]&0\\
		0\ar@{->}[r]&
		\text{ad}(\mathcal{P}_G)\ar@{->}[r]^{\iota'}&\mathcal{A}t(\mathcal{P}_G)\ar@{->}[r]^{\eta'}&\mathscr{T}_X\ar@{->}[r]&{0}.
	}
			\end{equation*}
Here $\text{ad}(\phi)$ is the induced map between \(\text{Ker}(\eta)\) and \(\text{Ker}(\eta')\) from the map \(\text{At}(\phi)\). This induces the following commutative diagram with exact rows
   \begin{equation}\label{esg1.1}
	\xymatrixrowsep{1.8pc} \xymatrixcolsep{2.2pc}
	\xymatrix{
		0\ar@{->}[r]&
		\text{ad}(\mathcal{P}_H)\ar@{->}[r]^{\iota}\ar@{->}[d]^{\text{ad}(\phi)}& \mathcal{A}t(\mathcal{P}_H)\ar@{->}[d]^{\text{At}(\phi)}\ar@{->}[r]^{\bar{\eta}}&\mathscr{T}_X(-\text{log } D)\ar@{->}[d]_{Id}\ar@{->}[d]\ar@{->}[r]&0\\
		0\ar@{->}[r]&
		\text{ad}(\mathcal{P}_G)\ar@{->}[r]^{\iota'}&\mathcal{A}t(\mathcal{P}_G)\ar@{->}[r]^{\bar{\eta'}}&\mathscr{T}_X(-\text{log } D)\ar@{->}[r]&{0}.
	}
			\end{equation}
   If the principal $H$-bundle $\mathcal{P}_H$ admits a logarithmic connection $\zeta$, then from the commutativity of \eqref{esg1.1}, $\text{At}(\phi) \circ \zeta$ will be a logarithmic connection on $\mathcal{P}_G$.
\end{proof}

\begin{cor}
    Let $p : \mathcal{P} \rightarrow X$ be a  principal \(G\)-bundle with a logarithmic connection. Then for any representation \(V\) of \(G\), there is an induced logarithmic connection on the associated vector bundle \(\mathcal{P}(V):=\mathcal{P} \times^G  V\)
\end{cor}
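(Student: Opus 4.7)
The plan is to reduce the statement directly to Proposition \ref{inducedconn4.1}. A representation of $G$ on a $\mathbb{K}$-vector space $V$ of dimension $r$ is the same data as a homomorphism of algebraic groups $\phi : G \to GL(V) \cong GL(r, \mathbb{K})$, and both $G$ and $GL(r, \mathbb{K})$ are reductive linear algebraic groups. Hence Proposition \ref{inducedconn4.1} applies to the given logarithmic connection on $\mathcal{P}$ and produces a logarithmic connection on the associated principal $GL(r, \mathbb{K})$-bundle $\mathcal{P}_{GL(V)} := \mathcal{P} \times^G GL(V)$.

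Next I would identify $\mathcal{P}_{GL(V)}$ with the frame bundle of the associated vector bundle $\mathcal{P}(V) = \mathcal{P} \times^G V$. This identification is the standard one: both objects arise as quotients of $\mathcal{P} \times GL(V)$ by the appropriate $G$-actions (acting on $V$ through $\phi$ in the second case), and the resulting isomorphism of principal $GL(r, \mathbb{K})$-bundles is canonical. Under the one-to-one correspondence between rank $r$ vector bundles on $X$ and principal $GL(r, \mathbb{K})$-bundles on $X$, a logarithmic connection on $\mathcal{P}_{GL(V)}$ is the same data as a logarithmic connection on the associated vector bundle $\mathcal{P}(V)$; when $\Omega^1_X(\text{log } D)$ is locally free, Proposition \ref{equivalence1} produces explicitly a $\mathbb{K}$-linear covariant derivative $\nabla : \mathcal{P}(V) \to \mathcal{P}(V) \otimes_{\mathcal{O}_X} \Omega^1_X(\text{log } D)$ satisfying the Leibniz rule.

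There is no serious technical obstacle here: the corollary is essentially a packaging of Proposition \ref{inducedconn4.1} specialized to the target group $GL(r, \mathbb{K})$, combined with the standard dictionary relating principal $GL(r, \mathbb{K})$-bundles and their associated rank $r$ vector bundles. The only mildly nontrivial bookkeeping is verifying that, along the canonical identification of $\mathcal{P}_{GL(V)}$ with the frame bundle of $\mathcal{P}(V)$, the splitting of the logarithmic Atiyah sequence produced by Proposition \ref{inducedconn4.1} matches the splitting that corresponds to the induced connection on $\mathcal{P}(V)$, and this is immediate from functoriality of the constructions.
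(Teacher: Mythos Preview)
Your proposal is correct and follows exactly the route the paper intends: the corollary is stated immediately after Proposition \ref{inducedconn4.1} without proof, and your reduction via the homomorphism $\phi : G \to GL(V)$ together with the identification of $\mathcal{P}_{GL(V)}$ with the frame bundle of $\mathcal{P}(V)$ is precisely the implicit argument. Your observation that the passage to a covariant derivative $\nabla$ on $\mathcal{P}(V)$ requires the extra hypothesis that $\Omega^1_X(\text{log } D)$ be locally free (via Proposition \ref{equivalence1}) is a useful clarification the paper leaves tacit.
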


\noindent
The following proposition shows that the converse of Proposition \ref{inducedconn4.1} holds under the assumption that the map $\phi$ is injective (cf. \cite[Proposition 3.3.1]{GP} and \cite[Lemma 3.3]{BDPS}).

\begin{prop}
    Let $\phi : H \rightarrow G$ be an injective homomorphism between reductive algebraic groups \(H\) and \(G\). Let $p : \mathcal{P}_H \rightarrow X$ be a  principal \(H\)-bundle. Then a logarithmic connection on the associated principal \(G\)-bundle $\mathcal{P}_G:= \mathcal{P}_H \times^H  G$ induces a logarithmic connection on the principal $H$-bundle $\mathcal{P}_H$.
\end{prop}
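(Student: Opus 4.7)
The plan is to reinterpret the existence of a logarithmic connection on a principal bundle $\mathcal{P}$ as the vanishing of the extension class
\begin{equation*}
a(\mathcal{P}) \;\in\; \text{Ext}^1_{\mathcal{O}_X}\!\bigl(\mathscr{T}_X(-\text{log } D),\, \text{ad}(\mathcal{P})\bigr)
\end{equation*}
of its logarithmic Atiyah sequence \eqref{log_at}: an $\mathcal{O}_X$-linear splitting of this sequence is precisely a logarithmic connection, so $\mathcal{P}$ admits a logarithmic connection singular over $D$ if and only if $a(\mathcal{P})=0$. Writing $a(\mathcal{P}_H)$ and $a(\mathcal{P}_G)$ for the corresponding classes, the hypothesis gives $a(\mathcal{P}_G)=0$ and I wish to deduce $a(\mathcal{P}_H)=0$.

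Applied to the diagram \eqref{esg1.1}, with the identity on the rightmost column, a direct five-lemma argument identifies the bottom row (the log Atiyah sequence of $\mathcal{P}_G$) with the pushout of the top row (that of $\mathcal{P}_H$) along $\text{ad}(\phi)$. Consequently, the induced push-forward
\begin{equation*}
\text{ad}(\phi)_*:\; \text{Ext}^1_{\mathcal{O}_X}\!\bigl(\mathscr{T}_X(-\text{log } D), \text{ad}(\mathcal{P}_H)\bigr) \;\longrightarrow\; \text{Ext}^1_{\mathcal{O}_X}\!\bigl(\mathscr{T}_X(-\text{log } D), \text{ad}(\mathcal{P}_G)\bigr)
\end{equation*}
sends $a(\mathcal{P}_H)$ to $a(\mathcal{P}_G)$, and it suffices to show that $\text{ad}(\phi)_*$ is injective.

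Injectivity follows from reductivity of $H$ together with the injectivity of $\phi$. Since $\mathbb{K}$ has characteristic zero and $H$ is reductive, the $H$-representation on $\mathfrak{g}$ obtained by restricting the adjoint representation of $G$ via $\phi$ is completely reducible; this yields an $H$-equivariant decomposition $\mathfrak{g}=\mathfrak{h}\oplus\mathfrak{m}$. Using the canonical identification $\text{ad}(\mathcal{P}_G) = \mathcal{P}_G \times^G \mathfrak{g} \cong \mathcal{P}_H \times^H \mathfrak{g}$—valid because $\mathcal{P}_G = \mathcal{P}_H\times^H G$—this splitting descends to a direct sum of locally free $\mathcal{O}_X$-modules
\begin{equation*}
\text{ad}(\mathcal{P}_G) \;\cong\; \text{ad}(\mathcal{P}_H) \,\oplus\, (\mathcal{P}_H\times^H \mathfrak{m}),
\end{equation*}
in which $\text{ad}(\phi)$ is the first coordinate inclusion. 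Applying $\text{Ext}^1(\mathscr{T}_X(-\text{log } D),-)$ preserves this direct sum, so $\text{ad}(\phi)_*$ is a split monomorphism, completing the argument.

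The main conceptual ingredient is the $H$-module splitting $\mathfrak{g}=\mathfrak{h}\oplus\mathfrak{m}$; once isolated, everything else is formal from \eqref{esg1.1}. If one prefers to work directly with connections rather than with extension classes, the same splitting produces an $\mathcal{O}_X$-linear retraction $\rho : \text{ad}(\mathcal{P}_G) \to \text{ad}(\mathcal{P}_H)$, which lifts (via the pushout identification above) to a retraction $r : \mathcal{A}t(\mathcal{P}_G)(-\text{log } D) \to \mathcal{A}t(\mathcal{P}_H)(-\text{log } D)$ compatible with the surjections to $\mathscr{T}_X(-\text{log } D)$; then $\zeta:=r\circ\zeta'$ defines the desired logarithmic connection on $\mathcal{P}_H$ from any given logarithmic connection $\zeta'$ on $\mathcal{P}_G$.
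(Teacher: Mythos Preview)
Your proof is correct and rests on the same key idea as the paper's: use reductivity of $H$ to obtain an $H$-equivariant splitting $\mathfrak{g}=\mathfrak{h}\oplus\mathfrak{m}$, then transport this via the identification $\text{ad}(\mathcal{P}_G)\cong\mathcal{P}_H\times^H\mathfrak{g}$ to split $\text{ad}(\phi)$. The packaging differs slightly: the paper works directly with splittings, taking the retraction $\bar\gamma:\text{ad}(\mathcal{P}_G)\to\text{ad}(\mathcal{P}_H)$ and observing that if $\zeta''$ is the left splitting of the bottom row of \eqref{esg1.1} corresponding to a given connection $\zeta'$, then $\bar\gamma\circ\zeta''\circ\text{At}(\phi)$ is a left splitting of the top row; you instead phrase everything via the pushout description of \eqref{esg1.1} and the resulting map on $\text{Ext}^1$. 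Your alternative construction at the end is a mild variant of the paper's direct argument.
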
 

\begin{proof}
    The map $\phi$ induces an injective homomorphism \(d\phi : \mathfrak{h} \rightarrow \mathfrak{g}\) between the corresponding Lie algebras. Note that \(\mathfrak{h}\) is an \(H\)-module via the adjoint action. Also, the adjoint action of \(G\) composed with $\phi$ makes $\mathfrak{g}$ an \(H\)-module. The map \(d\phi : \mathfrak{h} \rightarrow \mathfrak{g}\) preserves the \(H\)-module structures. Since \(H\) is reductive, there is an \(H\)-submodule \(V\), such that \[\mathfrak{g}= d\phi(\mathfrak{h}) \oplus V.\]
    Since \(d\phi\) is injective, identifying \(\mathfrak{h}\) with its image, we have the projection map \(\gamma : \mathfrak{g} \rightarrow \mathfrak{h}\). In particular, we have 
    \begin{equation}\label{4.9eq1}
        \gamma \circ d\phi =Id_{\mathfrak{h}}.
    \end{equation} 
 Since \(\text{ad}(\mathcal{P}_G) \cong \mathcal{P}_H \times^H \mathfrak{g}\), the map $\gamma$ induces the following map 
    $$\bar \gamma : \text{ad}(\mathcal{P}_G) \cong \mathcal{P}_H \times^H \mathfrak{g}   \longrightarrow   \text{ad}(\mathcal{P}_H):= \mathcal{P}_H \times^H \mathfrak{h}.$$
Note that the map \(\text{ad}(\phi)\) in \eqref{esg1.1} is induced from the map 
\begin{equation}\label{4.9eq2}
    Id \times d \phi : \mathcal{P}_H \times \mathfrak{h} \rightarrow \mathcal{P}_H \times \mathfrak{g}.
\end{equation}
To see this first observe that \(\text{ad}(\phi)\) is induced from \(Id \times d \phi\) when the bundle $\mathcal{P}_H$ is trivial. Since the bundle \(\mathcal{P}_H\) is locally isotrivial, we pull back the principal bundles in the commutative diagram \eqref{esg1} via an étale trivialization to reduce to the case of the trivial bundle. Then we argue as in the proof of Proposition \ref{atseq}. Hence from \eqref{4.9eq1} and \eqref{4.9eq2}, we have
    \[\bar \gamma \circ \text{ad}(\phi)= Id_{\text{ad}(\mathcal{P}_H)}.\]
    Now suppose that $\mathcal{P}_G$ admits a logarithmic connection $\zeta'$. Then it induces a splitting 
    \[\zeta'' : \mathcal{A}t(\mathcal{P}_G) \rightarrow \text{ad}(\mathcal{P}_G) \] 
    of $\iota'$ in \eqref{esg1.1}. Then \(\bar \gamma \circ \zeta'' \circ \text{At}(\phi)\) gives a splitting of $\iota$. Hence, we get a splitting of $\bar \eta$, i.e. a logarithmic connection on \(\mathcal{P}_H\).
\end{proof}

\begin{cor}\label{adbdlstab}
    Let \(G\) be a semisimple linear algebraic group and $\mathcal{P}$ be a principal \(G\)-bundle on \(X\). Then $\mathcal{P}$ admits a logarithmic connection if and only if  the associated adjoint bundle \(\text{ad}(\mathcal{P})\) admits a logarithmic connection.
\end{cor}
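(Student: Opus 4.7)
The plan is to derive both implications from Corollary~\ref{equivalence} combined with the two propositions of this section. The forward direction is immediate from the corollary following Proposition~\ref{inducedconn4.1}: the adjoint representation $\mathrm{Ad}: G \to GL(\mathfrak{g})$ realises $\mathrm{ad}(\mathcal{P}) = \mathcal{P} \times^G \mathfrak{g}$ as the associated vector bundle for a representation of $G$, so any logarithmic connection on $\mathcal{P}$ induces one on $\mathrm{ad}(\mathcal{P})$.

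For the converse, suppose $\mathrm{ad}(\mathcal{P})$ admits a logarithmic connection. First apply Corollary~\ref{equivalence} to obtain a logarithmic connection on the frame bundle $\mathcal{F}$ of $\mathrm{ad}(\mathcal{P})$, a principal $GL(\mathfrak{g})$-bundle. Since $G$ is semisimple, its centre $Z(G)$ is finite, and the adjoint representation factors as
\[ G \xrightarrow{\ \pi\ } G^{\mathrm{ad}} := G/Z(G) \xrightarrow{\ \iota\ } GL(\mathfrak{g}), \]
with $\iota$ a closed embedding. Setting $\mathcal{P}^{\mathrm{ad}} := \mathcal{P}/Z(G) = \mathcal{P} \times^G G^{\mathrm{ad}}$, one has $\mathcal{F} = \mathcal{P}^{\mathrm{ad}} \times^{G^{\mathrm{ad}}} GL(\mathfrak{g})$. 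The previous proposition (the converse of Proposition~\ref{inducedconn4.1} for injective $\phi$), applied to $\iota$, then produces a logarithmic connection on $\mathcal{P}^{\mathrm{ad}}$.

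The key remaining step is to transfer this connection from $\mathcal{P}^{\mathrm{ad}}$ back to $\mathcal{P}$, and this is where I expect the main work to lie. Applying Proposition~\ref{inducedconn4.1} to $\pi: G \to G^{\mathrm{ad}}$ yields a morphism of the corresponding logarithmic Atiyah sequences, with $\mathrm{Id}$ on $\mathscr{T}_X(-\text{log } D)$, the map $\mathrm{At}(\pi)$ in the middle, and $\mathrm{ad}(\pi)$ on the adjoint piece. Because $\mathrm{Lie}(Z(G))=0$, the differential $d\pi: \mathfrak{g} \to \mathfrak{g}^{\mathrm{ad}}$ is an isomorphism; equivalently, $Z(G)$ acts trivially on $\mathfrak{g}$ via the adjoint action, giving a canonical identification $\mathcal{P} \times^G \mathfrak{g} \cong \mathcal{P}^{\mathrm{ad}} \times^{G^{\mathrm{ad}}} \mathfrak{g}$. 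Hence $\mathrm{ad}(\pi)$ is an isomorphism, and the five lemma forces $\mathrm{At}(\pi)$ to be an isomorphism of logarithmic Atiyah sheaves compatible with $\bar{\eta}$. Under $\mathrm{At}(\pi)^{-1}$ the splitting of $\bar{\eta}$ for $\mathcal{P}^{\mathrm{ad}}$ pulls back to a splitting for $\mathcal{P}$, which is the desired logarithmic connection. The main obstacle is being careful about naturality, namely that the morphism of Atiyah sequences induced by $\pi$ genuinely intertwines the maps $\bar{\eta}$ so that splittings transport between the two sequences; this should reduce to the functoriality already implicit in the proof of Proposition~\ref{inducedconn4.1}.
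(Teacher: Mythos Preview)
Your argument is correct, and the key insight matches the paper's: semisimplicity forces the centre of $\mathfrak{g}$ to vanish, so the Lie-algebra map underlying the adjoint representation is injective (indeed an isomorphism onto its image). The paper's implicit proof is slightly more direct, however. The proof of the preceding proposition uses the injectivity of $\phi$ only to obtain injectivity of $d\phi$; everything thereafter depends solely on $d\phi$. Since $d\mathrm{Ad}=\mathrm{ad}\colon \mathfrak{g}\to\mathfrak{gl}(\mathfrak{g})$ is injective for semisimple $G$, that proof applies verbatim with $\phi=\mathrm{Ad}$, yielding the converse in one step without passing through $G^{\mathrm{ad}}$. Your detour through $\mathcal{P}^{\mathrm{ad}}$ and the five-lemma argument for $\mathrm{At}(\pi)$ is really a repackaging of the same fact: the isomorphism $\mathrm{ad}(\pi)$ you invoke is precisely the statement that $d\pi$ (equivalently $d\mathrm{Ad}$) is a Lie-algebra isomorphism.

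One minor point: your appeal to Corollary~\ref{equivalence} at the start of the converse is superfluous and drags in the hypothesis that $\Omega^1_X(\log D)$ is locally free. In this section, ``$\mathrm{ad}(\mathcal{P})$ admits a logarithmic connection'' is to be read as a statement about its frame bundle $\mathcal{P}\times^G GL(\mathfrak{g})$, so you may begin the converse there directly.
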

 
	\section{Logarithmic connections on equivariant principal bundles over toric varieties}\label{toric situation}
	
	In this section, we show that any toric principal bundle over a toric variety admits a natural logarithmic connection. We extend the notion of residue for any vector bundle on toric varieties. Finally, we establish a relation between the residue of the natural connection of an equivariant vector bundle with its equivariant structure.
	
	\subsection{Existence of logarithmic connections on toric varieties}\label{log conn toric}
	
	Let \(X\) be a projective toric variety. A toric principal \(G\)-bundle on the toric variety \(X\) is a principal \(G\)-bundle \(p : \mc{P} \rightarrow X\) together with a lift of the \(T\)-action on the total space $\mc{P}$ which commutes with the right \(G\)-action.
	
	The following proposition shows that on a toric variety \(X\), any toric principal bundles admit a logarithmic connection.
	\begin{prop}\label{prop1}
		Suppose \(\mathcal{P}\) is a $T$-equivariant algebraic principal $G$-bundle over a toric variety $X$ with boundary divisor \(D=X \setminus T\). Then \(\mathcal{P}\) admits an integrable logarithmic connection singular over \(D\).
	\end{prop}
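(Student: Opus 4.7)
The plan is to construct the logarithmic connection directly from the $T$-action on $\mathcal{P}$, using the fact that for a toric variety the logarithmic tangent sheaf is the trivial sheaf $\mathcal{V} = \mathcal{O}_X \otimes_\mathbb{K} \mathfrak{t}$ by Proposition \ref{odat}. More precisely, for each $v \in \mathfrak{t}$, the one-parameter subgroup of $T$ generated by $v$ acts on $\mathcal{P}$ by the given $T$-equivariant structure, and differentiating this action yields a fundamental vector field $\tilde{v}$ on $\mathcal{P}$, i.e.\ a global section of $\mathscr{T}_\mathcal{P}$. Because the $T$-action commutes with the right $G$-action on $\mathcal{P}$, the derivation $\tilde{v}$ is $G$-invariant, and so it defines a global section of the Atiyah sheaf $\mathcal{A}t(\mathcal{P}) = (p_* \mathscr{T}_\mathcal{P})^G$. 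This gives a $\mathbb{K}$-linear map $\mathfrak{t} \to H^0(X, \mathcal{A}t(\mathcal{P}))$ which extends $\mathcal{O}_X$-linearly to
\begin{equation*}
    \tilde{\zeta} : \mathcal{V} = \mathcal{O}_X \otimes_\mathbb{K} \mathfrak{t} \longrightarrow \mathcal{A}t(\mathcal{P}), \qquad f \otimes v \longmapsto f \cdot \tilde{v}.
\end{equation*}

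The next step is to check that $\tilde{\zeta}$ lands inside $\mathcal{A}t(\mathcal{P})(-\log D)$ and splits $\bar{\eta}$. Since $p : \mathcal{P} \to X$ is $T$-equivariant, the fundamental vector field $\tilde{v}$ on $\mathcal{P}$ projects under $\eta$ to the fundamental vector field on $X$ associated with $v \in \mathfrak{t}$, which is exactly the derivation $\delta_v$ of \eqref{LID on X}. That is, $\eta \circ \tilde{\zeta} = \bar{\beta}$, where $\bar{\beta} : \mathcal{V} \xrightarrow{\cong} \mathscr{T}_X(-\log D)$ is the isomorphism of Proposition \ref{odat}. In particular the image of $\tilde{\zeta}$ lies in $\eta^{-1}(\mathscr{T}_X(-\log D)) = \mathcal{A}t(\mathcal{P})(-\log D)$, and setting
\begin{equation*}
    \zeta := \tilde{\zeta} \circ \bar{\beta}^{-1} : \mathscr{T}_X(-\log D) \longrightarrow \mathcal{A}t(\mathcal{P})(-\log D)
\end{equation*}
gives an $\mathcal{O}_X$-linear homomorphism satisfying $\bar{\eta} \circ \zeta = \mathrm{Id}$, i.e.\ a logarithmic connection on $\mathcal{P}$ singular along $D$.

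Finally, to establish integrability one must verify that $\zeta$ preserves Lie brackets, equivalently that $\tilde{\zeta}$ is compatible with the bracket \eqref{Lie_str} on $\mathcal{V}$. Since $\mathfrak{t}$ is abelian, the fundamental vector fields satisfy $[\tilde{v}_1, \tilde{v}_2] = 0$ on $\mathcal{P}$. A direct computation in $\mathcal{A}t(\mathcal{P})$ then gives, for local sections $f_1, f_2$ of $\mathcal{O}_X$,
\begin{equation*}
    [f_1 \tilde{v}_1, \, f_2 \tilde{v}_2] = f_1 \tilde{v}_1(f_2)\,\tilde{v}_2 - f_2 \tilde{v}_2(f_1)\,\tilde{v}_1,
\end{equation*}
and since $\tilde{v}_i$ acts on the pullback of $f_j$ as $\delta_{v_i}(f_j)$, this matches $\tilde{\zeta}([f_1 \otimes v_1, f_2 \otimes v_2])$ by \eqref{Lie_str}. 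Hence $\tilde{\zeta}$, and consequently $\zeta$, is a Lie algebra homomorphism.

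The main potential obstacle is conceptual rather than computational: one needs to be careful that the construction of the fundamental vector field $\tilde{v}$ genuinely produces a section of $\mathcal{A}t(\mathcal{P})$ on the possibly singular variety $X$, and that the agreement $\eta(\tilde{v}) = \delta_v$ can be verified intrinsically (working affine-locally on $X$, using the identification $\mathcal{O}_\mathcal{P}(p^{-1}(U))^G \cong \mathcal{O}_X(U)$ from the definition of $\eta$). Once this affine-local description of $\tilde{\zeta}$ is pinned down, everything else reduces to the algebraic identities above and the abelian-ness of $\mathfrak{t}$.
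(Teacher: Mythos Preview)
Your proposal is correct and follows essentially the same approach as the paper: the paper's derivation $\tilde{\delta}_v$ (defined via the $T$-isotypical decomposition of $\mathcal{O}_{\mathcal{P}}$, with Sumihiro's theorem supplying the needed $T$-invariant affine cover of the normal variety $\mathcal{P}$) is precisely your fundamental vector field $\tilde v$, and your map $\tilde\zeta$ coincides with the paper's $(p_*\tilde\beta)^G$. The paper addresses the ``potential obstacle'' you flag exactly by this explicit isotypical description, and its verification of integrability is the same Leibniz computation you outline.
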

	
	\begin{proof}
		Consider the sheaf \(\tilde{\mathcal{V}}:= \mathcal{O}_{\mathcal{P}} \otimes_{\Z} N \cong  \mathcal{O}_{\mathcal{P}} \otimes_{\mathbb{K}} \mathfrak{t}  \) on \(\mathcal{P}\).  Note that, \(\text{H}^0(\mathcal{P}, \tilde{\mathcal{V}})\) has a natural Lie algebra structure via the isomorphism \(\text{H}^0(\mathcal{P}, \tilde{\mathcal{V}}) \cong \mathfrak{t}\). Since there is an action of \(T\) on \(\mathcal{P}\), as before we have \( \mathfrak{t} \subset \text{H}^0(\mathcal{P}, \mathscr{T}_{\mathcal{P}})\). To see this, note that for any \(T\)-invariant open set \(V \subset \mathcal{P} \), we have a \(T\)-isotypical decomposition \(\mathcal{O}_{\mathcal{P}}(V)= \oplus_{m \in M} \, \mathcal{O}_{\mathcal{P}}(V)_m\). Then for \(v \in N\), define the derivation (cf. \eqref{LID on X})
		\begin{equation}\label{del_tilde}
			\tilde{\delta}_v:\mathcal{O}_{\mathcal{P}}(V) \rar \mathcal{O}_{\mathcal{P}}(V) \text{ by sending } \tilde{f} \mapsto \langle m,v \rangle \, \tilde{f},
		\end{equation}
		where \(\tilde{f} \in \mathcal{O}_{\mathcal{P}}(V)_m\). Since \(X\) is normal, the principal bundle $\mathcal{P}$ is also normal (by \cite[Section 3]{Br}) and hence \(\mathcal{P}\) has a covering by \(T\)-invariant affine open subsets using Sumihiro's theorem \cite[Theorem 3.1.7]{Cox}. Thus the derivations \(\tilde{\delta}_v\) defined in \eqref{del_tilde} glue to give an element of \(\text{H}^0(\mathcal{P}, \mathscr{T}_{\mathcal{P}})\). Thus, we have a morphism
		\begin{equation}\label{trivial_top}
			\tilde{\beta}: \tilde{\mathcal{V}} \rar \mathscr{T}_{\mathcal{P}}.
		\end{equation} 
		Note that $\tilde{\mathcal{V}}$ can be given a Lie algebra structure similarly as before (see \eqref{Lie_str}) so that \(\tilde{\beta}\) becomes a Lie algebra homomorphism. We now show that \(\tilde{\beta}\) is \(G\)-equivariant. First note that for \(v \in N\), the derivation $\tilde{\delta_v}$ defined in \eqref{del_tilde} is a \(G\)-invariant derivation. To see this, observe that there is a cover of $\mathcal{P}$ by open sets \(\{V\}\), which are both \(G\) and \(T\)-invariant, namely of the form \(\{p^{-1}(U_i)\}\), where \(\{U_i\}\) is a cover of \(X\).  Let $g \in G$ and \(\tilde{f} \in \mathcal{O}_{\mathcal{P}}(V)_m\), we have
		\begin{equation}\label{tildel_inv}
			\begin{split}
				(\tilde{\delta_v} \cdot g)(\tilde{f}) &=\tilde{\delta_v}(\tilde{f} \cdot g^{-1}) \cdot g\\
				&=(\langle m,v \rangle \, \tilde{f} \cdot g^{-1}) \cdot g ~ (\text{using \eqref{del_tilde} and the fact that }\\
				& \hspace{4 cm} \tilde{f} \cdot g^{-1} \in \mathcal{O}_{\mathcal{P}}(V)_m \, \text{as } T \text{ and } G \text{ actions commute}) \\
				&=\tilde{\delta_v} (\tilde{f}) ~ (\text{again by \eqref{del_tilde}}).
			\end{split}
		\end{equation}
		So for \(v \in N\) and \(g \in G\), we have
		\begin{equation*}
			\begin{split}
				(\tilde{\beta}_V(\tilde{f}_1 \otimes v) \cdot g)(\tilde{f}) &=((\tilde{f}_1 \tilde{\delta_v}) \cdot g)(\tilde{f})\\
				&=(\tilde{f}_1 \tilde{\delta_v}(\tilde{f} \cdot g^{-1}) )\cdot g\\
				&=(\tilde{f}_1 \cdot g) (\tilde{\delta_v}(\tilde{f} \cdot g^{-1}) \cdot g)\\
				& =(\tilde{f}_1 \cdot g) (\tilde{\delta_v} \cdot g)(\tilde{f})\\
				& =(\tilde{f}_1 \cdot g) (\tilde{\delta_v} )(\tilde{f}) ~ (\text{by } \eqref{tildel_inv})\\
				&=\tilde{\beta}_V((\tilde{f}_1 \cdot g \otimes v) )(\tilde{f})\\
				&=\tilde{\beta}_V(((\tilde{f}_1  \otimes v) \cdot g) )(\tilde{f}).
			\end{split}	
		\end{equation*}
		
		\noindent
		Thus, taking the invariant direct image of \eqref{trivial_top}, i.e., first considering the map between push forwards 
		\begin{equation*}
			p_*\tilde{\beta}: p_*\tilde{\mathcal{V}}=(p_*\mathcal{O}_{\mathcal{P}}) \otimes_{\Z} N \rar p_*\mathscr{T}_{\mathcal{P}}
		\end{equation*}
		and then taking the subsheaves of \(G\)-invariants on both sides and noting that the natural map in \eqref{p-sharp} induces  \((p_*\mc{O_{\mathcal{P}}})^G \cong \mc{O}_X\), we obtain the \(\mathcal{O}_X\)-module morphism
		\begin{equation}\label{gamma}
			(p_*\tilde{\beta})^G : \mathcal{V} \rar \mathcal{A}t(\mathcal{P}).
		\end{equation}
		We see that this morphism also preserves Lie algebra structures since \(\tilde{\beta}\) is \(G\)-equivariant and preserves Lie algebra structures. Note that for any \(v \in N\) we have,
		\begin{equation*}
			((p_*\tilde{\beta})^G)(1 \otimes v)=\tilde{\delta_v}.
		\end{equation*}
		Let \(f \in \mathcal{O}_X(U)\) with weight \(m\), i.e. \(t f= \chi^m(t) f\) for all \(t \in T\). Then 
		\begin{equation*}
			\begin{split}
				\eta(\tilde{\delta}_v)(f)= & p_* (\tilde{\delta}_v(p^{\sharp}(f))) ~ (\text{by \eqref{surj}})\\
				= & p_* (\langle m, v \rangle p^{\sharp}(f)) ~ (\text{since } p \text{ is \(T\)-equivariant, } p^{\sharp}(f) \text{ also has weight } m)\\
				= & \langle m, v \rangle f  ~ (\text{since } p_*p^{\sharp}(f)=f)\\
				=&\delta_v(f).
			\end{split}
		\end{equation*}
		Hence we have \(\eta(\tilde{\delta}_v)=\delta_v\).	Thus we have the following commutative diagram of \(\mathcal{O}_X\)-modules
		\begin{center}
			\begin{tikzpicture}
				\matrix (m) [matrix of math nodes,row sep=3em,column sep=4em,minimum width=2em] {
					\mathcal{V} & \mathcal{A}t(\mathcal{P}) \\
					\mathcal{V} & \mathscr{T}_X \\};
				\path[-stealth]
				(m-1-1) edge node [left] {Id} (m-2-1)
				edge  node [above] {\((p_*\tilde{\beta})^G\)} (m-1-2)
				(m-2-1) edge node [below] {$\beta$} (m-2-2)
				(m-1-2) edge node [right] {$\eta$} (m-2-2);
			\end{tikzpicture}	
		\end{center}
		From \eqref{trivial2}, it follows that \eqref{gamma} factors through a map
		\begin{equation*}
			\gamma : \mathcal{V} \rar \mathcal{A}t(\mathcal{P})(- \text{log }D).
		\end{equation*} Since the map in \eqref{gamma} preserves Lie algebra structures, it follows that \(\gamma\) also preserves Lie algebra structures. 
		Consider \(\zeta:=\gamma \circ \bar{\beta}^{-1}\), the composition of \(\gamma\) with the isomorphism \(\bar{\beta}^{-1}\), given by
		\begin{equation*}
			\zeta: \mathscr{T}_X(-\text{log }D) \rar \mathcal{A}t(\mathcal{P})(- \text{log }D),
		\end{equation*}
		(see \eqref{trivial2}).
		Then we have \[\bar{\eta} \circ (\gamma \circ \bar{\beta}^{-1})=\text{Id}_{\mathscr{T}_X(-\text{log }D)}.\]
		Hence \(\zeta\) defines a logarithmic connection on the principal \(G\)-bundle $\mathcal{P}$ with singular locus \(D\). Since the maps $\bar{\beta}$ and $\gamma$ preserve Lie algebra structures, the logarithmic connection \(\zeta\) is integrable.
	\end{proof}
	
	\begin{rmk}\label{image}
		{\rm With notations as above, observe that \(\zeta(\delta_v)=\tilde{\delta}_v\).}
	\end{rmk}
	
	\begin{rmk}\label{vanishing}
		{\rm Let \(E\) be a \(T\)-equivariant vector bundle on a toric variety \(X\). Then the associated principal \(GL(r, \mathbb{K})\)-bundle $\mathcal{P}$
			is also equivariant. So $\mathcal{P}$ admits a logarithmic connection $\zeta$ using Proposition \ref{prop1}. Let $\nabla$ be the connection on the vector bundle \(E\) induced using Proposition \ref{equivalence1}. Let \(s\) be a \(T\)-invariant section of \(E\) over an invariant open subset of \(X\). Since $\Phi$ is also \(T\)-equivariant, the regular functions $\tilde{f}_1, \ldots, \tilde{f}_r$ defined in \eqref{img under Phi} are also \(T\)-invariant. Hence
			\begin{equation*}
				\begin{split}
					\zeta(\delta_{v_i})(\tilde{f}_i) & =\tilde{\delta}_{v_i}(\tilde{f}_i) ~ (\text{see Remark \ref{image}})\\
					& =0 ~ (\text{since } \tilde{f}_i \text{ has weight }0). 
				\end{split}
			\end{equation*}
			Hence from \eqref{nabla1} we see that $\widetilde{\nabla}(s \otimes \delta_{v_i})=0$, which implies that $\nabla(s)=0$ from \eqref{nabla2}. In other words, the connection vanishes on invariant sections of \(E\).
		} 
	\end{rmk}
	
	\begin{rmk}
		{\rm We also have an alternative proof of Proposition \ref{prop1}, which we give below. Let \(U_0=\cup_{\rho \in \Sigma(1)} U_{\rho}\) be the smooth toric variety whose fan has the same 1-dimensional cones as $\Sigma$, \(\text{codim}(X \setminus U_0) \geq 2 \) by the orbit-cone correspondence. Let \[D_0=\sum_{\rho \in \Sigma(1)} (D_{\rho} \cap U_0) \] denote the restriction of the boundary divisor to \(U_0\). Consider a toric principal \(G\)-bundle 
			\begin{equation*}
				\begin{split}
					&p: \mathcal{P} \rightarrow X, \text{ denote by}\\ &\mathcal{P}_0:=\mathcal{P}|_{U_0} \text{ and } \mathscr{T}_{0}:=\mathscr{T}_X|_{U_0}.
				\end{split}
			\end{equation*}
			Since both \(U_0 \subset X\) and \(\mathcal{P}_0 \subset \mathcal{P}\) are open subsets, we have
			\begin{equation*}
				\begin{split}
					&\text{ad}(\mathcal{P}_0)=\text{ad}(\mathcal{P})|_{U_0},\\ &\mathcal{A}t(\mathcal{P}_0) (- \text{log } D_0)=\mathcal{A}t(\mathcal{P}) (- \text{log } D)|_{U_0} \text{ and }\\
					& \mathscr{T}_{0}(- \text{log } D_0)=\mathscr{T}_X(- \text{log } D)|_{U_0}.
				\end{split}
			\end{equation*}
			Then restricting the logarithmic Atiyah sequence \eqref{log_at} to the smooth locus \(U_0\), we have the following exact sequence:
			\begin{equation}\label{log_at_0}
				0 \longrightarrow \text{ad}(\mathcal{P}_0) \longrightarrow \mathcal{A}t(\mathcal{P}_0) (- \text{log } D_0)  \stackrel{\bar{\eta}_0} \longrightarrow  \mathscr{T}_{0}(- \text{log } D_0) \longrightarrow 0 .
			\end{equation}
			Let $$\zeta_0:  \mathscr{T}_0(- \text{log } D_0) \rightarrow \mathcal{A}t(\mathcal{P}_0) (- \text{log } D_0)$$ be the integrable logarithmic connection given by \cite[Proposition 3.2]{BDP}, in other words,  \[\bar{\eta}_0 \circ \zeta_0 = Id_{\mathscr{T}_0(- \text{log } D_0)}.\] Note that $\zeta_0$ corresponds to a section \[s_0 \in \Gamma(U_0, \mathscr{T}_X(- \text{log } D)^{\vee} \otimes \mathcal{A}t(\mathcal{P}) (- \text{log } D)).\] Since \(\mathscr{T}_X(- \text{log } D)^{\vee} \otimes \mathcal{A}t(\mathcal{P}) (- \text{log } D)\) is locally free and \(\text{codim}(X \setminus U_0) \geq 2 \), the section \(s_0\) extends uniquely to a global section of \(\mathscr{T}_X(- \text{log } D)^{\vee} \otimes \mathcal{A}t(\mathcal{P}) (- \text{log } D)\). This global section induces a map  $$\zeta:  \mathscr{T}_X(- \text{log } D) \rightarrow \mathcal{A}t(\mathcal{P}) (- \text{log } D)$$ which also satisfies \[\bar{\eta} \circ \zeta = Id_{\mathscr{T}_X(- \text{log } D)}\] by density of \(U_0\). Note that this map is a unique extension of \(\zeta_0\). But \(\zeta_0\) is integrable, then for any \(\delta_1, \delta_2\), local sections of \(\mathscr{T}_X(- \text{log } D)\), we have \[\zeta_0([\delta_1 \mid_{U_0}, \delta_2 \mid_{U_0}])=[\zeta_0(\delta_1 \mid_{U_0}), \zeta_0(\delta_2 \mid_{U_0})].\] Then by uniqueness of the extension \(\zeta\) and the injectivity of restriction maps of  locally free sheaves, we see that \[\zeta([\delta_1, \delta_2])=[\zeta(\delta_1), \zeta(\delta_2)].\]  Hence the principal bundle \(p: \mathcal{P} \rightarrow X\) admits an integrable logarithmic connection.
		}
	\end{rmk}

	\subsubsection{Equivariant logarithmic connection}
	
	Let $\mathcal{P}$ be a \(T\)-equivariant principal \(G\)-bundle on the projective toric variety \(X\). Note that the Atiyah sheaf \(\mathcal{A}t(\mathcal{P})\) can be given a \(T\)-equivariant structure as follows. Let \(U\) be a \(T\)-invariant open subset of \(X\). Then \(p^{-1}(U)\) is a \(T\)-invariant open subset of $\mathcal{P}$, as the projection map \(p\) is \(T\)-equivariant. Then the action of \(T\) on \(\mathcal{A}t(\mathcal{P})(U)=\text{Der}_{\mathbb{K}}(\mathcal{O}_{\mc P}(p^{-1}(U)), \mathcal{O}_{\mc P}(p^{-1}(U)))^G\) is given by
	\begin{equation*}
		(t \cdot \tilde{\delta})(\tilde{f})=t \cdot \tilde{\delta}(t^{-1} \cdot \tilde{f}),
	\end{equation*}
	for \(t \in T\), $\tilde{\delta} \in \mathcal{A}t(\mathcal{P})(U)$ and \(\tilde{f} \in \mathcal{O}_{\mc P}(p^{-1}(U))\). The tangent sheaf $\mathscr{T}_X$ has a natural \(T\)-equivariant structure so that the map $\eta: \mathcal{A}t(\mathcal{P}) \rightarrow \mathscr{T}_X$ defined in \eqref{the map eta} becomes \(T\)-equivariant. Moreover, the natural inclusion \(\mathscr{T}_X(-\text{log }D) \hookrightarrow \mathscr{T}_X\) is also \(T\)-equivariant, hence \(\mathcal{A}t(\mathcal{P})(-\text{log }D)\) acquires a \(T\)-equivariant structure so that the logarithmic Atiyah sequence \eqref{log_at} becomes a short exact sequence of \(T\)-equivariant vector bundles. In particular, $\bar{\eta}$ becomes \(T\)-equivariant.
	
	Set \(\mathcal{A}:=\mathcal{A}t(\mathcal{P})(-\text{log }D)\) and \(\mathscr{T}:=\mathscr{T}_X(-\text{log }D)\). There is an induced action of \(T\) on \(\text{Hom}_{\mathcal{O}_X}(\mathscr{T}, \mathcal{A})\), the space of $\mathcal{O}_X$-module homomorphism between the vector bundles \(\mathscr{T}\) and \(\mathcal{A}\). Hence we have the following isotypical decomposition
	\begin{equation*}
		\text{Hom}_{\mathcal{O}_X}(\mathscr{T}, \mathcal{A})= \bigoplus\limits_{m \in M} \, \text{Hom}_T(\mathscr{T}, \mathcal{A} \otimes \text{div}(\chi^m)), 
	\end{equation*}
	where \(\text{Hom}_T(\mathscr{T}, \mathcal{A} \otimes \text{div}(\chi^m))\) denotes the space of \(T\)-equivariant homomorphisms between the vector bundles \(\mathscr{T}\) and \(\mathcal{A} \otimes \text{div}(\chi^m)\).
	
	\begin{defn}
		A homogeneous logarithmic connection of degree \(m\in M\) is an element $\zeta_m \in \text{Hom}_T(\mathscr{T}, \mathcal{A} \otimes \text{div}(\chi^m))$ such that \(\bar{\eta} \circ \zeta_m = Id_{\mathscr{T}_X(- \text{log } D)}\) holds. A homogeneous logarithmic connection of degree \(0\) is said to be an equivariant logarithmic connection.
	\end{defn}
	
	\begin{rmk}{\rm
			Let $\zeta$ be a logarithmic connection on the equivariant principal \(G\)-bundle $\mc{P}$. Then $\zeta \in 	\text{Hom}_{\mathcal{O}_X}(\mathscr{T}, \mathcal{A})$. Then write \(\zeta=\sum_{m \in M} \zeta_m\), where \(\zeta_m \in 	\text{Hom}_{\mathcal{O}_X}(\mathscr{T}, \mathcal{A})_m\). We have
			\begin{equation}\label{eqi_conn1}
				Id_{\mathscr{T}}=\bar{\eta} \circ \zeta= \sum_{m \in M} (\bar{\eta} \circ \zeta_m).
			\end{equation}
			Note that \(\bar{\eta} \circ \zeta_m \in \text{Hom}_{\mathcal{O}_X}(\mathscr{T}, \mathscr{T})\) and  \(Id_{\mathscr{T}} \in \text{Hom}_{\mathcal{O}_X}(\mathscr{T}, \mathscr{T})\), hence comparing degree on both sides of \eqref{eqi_conn1}, we have 
			\begin{equation*}
				\bar{\eta} \circ \zeta_0=Id_{\mathscr{T}}  \text{ and } \bar{\eta} \circ \zeta_m= 0 \text{ for } 0 \neq m \in M.
			\end{equation*}
			This shows that $\zeta_0$ is an equivariant logarithmic connection. 
		}
	\end{rmk}
	
	\begin{rmk}{\rm
			Let $\zeta$ be the integrable logarithmic connection on the equivariant principal \(G\)-bundle $\mc{P}$ given by the Proposition \ref{prop1}. We note that $\zeta$ itself is an equivariant logarithmic connection. This can be seen as follows. Since the projection \(p : \mc{P} \rightarrow X\) is \(T\)-equivariant, one can show that the map \(\tilde{\beta}\) defined in \eqref{trivial_top}, is \(T\)-equivariant. The induced map \((p_*\tilde{\beta})^G\) defined in \eqref{gamma}, is again \(T\)-equivariant, since the actions of \(G\) and \(T\) commute. This shows that $\zeta$ being a composition of \(T\)-equivariant maps is also \(T\)-equivariant.
		}
	\end{rmk}

	\subsection{Equivariant structure induced from a logarithmic connection}\label{eqslc}
	
Let \(X\) be a projective toric variety. To any principal \(G\)-bundle $p : \mathcal{P} \rightarrow X $ we associate the following (abstract) groups (see \cite[Section 4]{Br}):
	\begin{align*}
		& \text{Aut}_X(\mathcal{P}):=\{\tilde{\psi} \in  \text{Aut}(\mathcal{P})  ~|~  p \circ \tilde{\psi}=p\},\\
		& \text{Aut}(\mathcal{P}, X):=\{(\tilde{\psi}, \psi)  \in \text{Aut}(\mathcal{P}) \times \text{Aut}(X) ~|~ p \circ \tilde{\psi}=\psi \circ p\},\\
		& \text{Aut}^G(\mathcal{P}):=\{\tilde{\psi} \in \text{Aut}(\mathcal{P})  ~|~ \tilde{\psi} \text{ is $G$-equivariant}\},\\
		& \text{Aut}^G_X(\mathcal{P}):=\{\tilde{\psi} \in  \text{Aut}(\mathcal{P})  ~|~ \tilde{\psi} \text{ is $G$-equivariant, and } p \circ \tilde{\psi}=p\}.
	\end{align*} 
	We have an exact sequence 
	\begin{equation*}
		1 \longrightarrow \text{Aut}_X(\mathcal{P}) \longrightarrow \text{Aut}(\mathcal{P}, X) \stackrel{p_2}\longrightarrow \text{Aut}(X) .
	\end{equation*}
	Since \(p : \mathcal{P} \rightarrow X\) is a categorical quotient, for each \(G\)-equivariant automorphism \(\tilde{\psi}:\mathcal{P} \rar \, \mathcal{P}\), we have the \(G\)-equivariant map \(p \circ \tilde{\psi}: \mathcal{P} \rar X\), which descends to an automorphism of \(X\), say \(h(\tilde{\psi}): X \rar \, X\) satisfying \(h(\tilde{\psi}) \circ p=p \circ \tilde{\psi}\).  This induces a map 
	\begin{equation}\label{map_aut} 
		h : \text{Aut}^G(\mathcal{P}) \rightarrow \text{Aut}(X).
	\end{equation}
This gives rise to an identification of \(\text{Aut}^G(\mathcal{P})\) with a subgroup of \(\text{Aut}(\mathcal{P}, X)\)
	and we have the following exact sequence
	\begin{equation}\label{autfn1}
		1 \longrightarrow \text{Aut}^G_X(\mathcal{P}) \longrightarrow \text{Aut}^G(\mathcal{P}) \stackrel{h}\longrightarrow \text{Aut}(X).
	\end{equation}
	
	Now recall from \cite[Section 3]{MO} that given any scheme \(V\) over any field $\mathbb{K}$ of characteristic zero, we can associate the automorphism group functor 
	\begin{equation*}
Aut (V) : (Sch/ \mathbb{K})^{op} \rightarrow Gr \text{ given by } Aut  (V)(S)=\text{Aut}_S(V \times S),
	\end{equation*}
	where \(S \in Sch/ \mathbb{K}\). If this functor is representable, we say that the automorphism group scheme of \(V\) exists. With this notion, the exact sequence \eqref{autfn1} yields the following exact sequence of group functors
	
	\begin{equation}\label{autfn2}
		1 \longrightarrow Aut^G_X(\mathcal{P}) \longrightarrow Aut^G(\mathcal{P}) \stackrel{h}\longrightarrow Aut(X).
	\end{equation}
	The functor $Aut^G(\mathcal{P})$ is represented by a group scheme, locally of finite type by \cite[Theorem 4.2]{Br}, which we will denote by $\text{Aut}^G(\mathcal{P})$. In particular, the neutral component \((Aut^G(\mathcal{P}))^{\circ}\) is a group scheme of finite type, i.e. smooth group variety. Since $Aut^G_X(\mathcal{P})$ is a closed subfunctor of $Aut^G(\mathcal{P})$, it is also represented by a group scheme (locally of finite type) that we denote likewise by $\text{Aut}^G_X(\mathcal{P})$. Moreover, $(\text{Aut}^G_X(\mathcal{P}))^{\circ}$ is a linear algebraic group (see \cite[Proposition 4.3]{Br}). Also, note that \(\text{Aut}(X)\) is an affine algebraic group with the torus \(T\) being a maximal torus (\cite{BG, Bu}). Thus, we obtain the following exact sequence of algebraic groups
	
		\begin{equation*}
		1 \longrightarrow (\text{Aut}^G_X(\mathcal{P}))^{\circ} \longrightarrow (\text{Aut}^G(\mathcal{P}))^{\circ} \stackrel{h}\longrightarrow (\text{Aut}(X))^{\circ}.
	\end{equation*}
	Consider the following exact sequence
	
	\begin{equation}\label{autfn3}
		1 \longrightarrow (\text{Aut}^G_X(\mathcal{P}))^{\circ} \longrightarrow (\text{Aut}^G(\mathcal{P}))^{\circ} \stackrel{h}\longrightarrow \text{Im}(h) \longrightarrow 1.
	\end{equation}
Note that \(\text{Im}(h)\) is linear algebraic group,	since \(\text{Im}(h)\) is a closed subgroup of \((\text{Aut}(X))^{\circ}\). Then there is a linear algebraic group structure on \((\text{Aut}^G(\mathcal{P}))^{\circ}\) uniquely determined by the exact sequence \eqref{autfn3}. For simplicity of notation, we omit to denote $\circ$.
	
	We now recall the Lie algebra of these algebraic groups from \cite{MO}. Consider the ring of dual numbers \(\text{I}_{\mathbb{K}}=\frac{\mathbb{K}[\varepsilon]}{\langle \varepsilon^2 \rangle}\) and let the augmentation map \(r:\text{I}_{\mathbb{K}} \rar \, \mathbb{K}\) be given by \(\varepsilon \mapsto 0\).
	We have \begin{equation*}
		\begin{split}
			\text{Lie}(\text{Aut}(X))=&\{\varphi:\text{Spec }\text{I}_{\mathbb{K}} \rar \text{Aut}(X) ~|~ \\
			&\varphi(*)=id, \text{ where Spec }\text{I}_{\mathbb{K}}=\{*\}  \}~ (\text{see \cite[Section 2]{MO}})\\
			 =\{\phi:& \mc{O}_X \rar \text{I}_{\mathbb{K}} \otimes \mc{O}_X		 \text{ is a homomorphism of }\\ & \mathbb{K} \text{-algebras on } X ~|~ (r \otimes 1) \circ \phi= \text{id}\} ~ (\text{see \cite[Lemma 3.3]{MO}}).
		\end{split}
	\end{equation*}

	and similarly, we have
		\begin{equation*}
		\begin{split}
			\text{Lie}(\text{Aut}^G(\mathcal{P}))=&\{\varphi:\text{Spec }\text{I}_{\mathbb{K}} \rar \text{Aut}^G(\mathcal{P}) ~|~ \\
			&\varphi(*)=id, \text{ where Spec }\text{I}_{\mathbb{K}}=\{*\}  \}~ (\text{see \cite[Section 2]{MO}})\\
			=&\{\phi: \mc{O_P} \rar \, \text{I}_{\mathbb{K}} \otimes \mc{O_P}		 \text{ is a } G\text{-equivariant homomorphism of }\\ 
			& \mathbb{K} \text{-algebras on } \mc{P} ~|~ (r \otimes 1) \circ \phi= \text{id}\} ~ (\text{see \cite[Lemma 3.3]{MO}})
		\end{split}
	\end{equation*}
	The map \(h\) defined in \eqref{map_aut} gives rise to the following map between their Lie algebras
	\begin{equation*}
		dh:\text{Lie}(\text{Aut}^G(\mathcal{P})) \rar 	\text{Lie}(\text{Aut}(X)) \text{ given by } \phi \mapsto dh(\phi).
	\end{equation*}
	To see this, consider \(\phi: \mc{O_P} \rar \, \text{I}_{\mathbb{K}} \otimes \mc{O_P} \in \text{Lie}(\text{Aut}^G(\mathcal{P})) \).  This will induce a map \(p_* \phi : p_*\mc{O_P} \rightarrow	I_{\mathbb{K}} \otimes  p_*\mc{O_P} \). Recall that \((p_*\mc{O_P})^G=\mc{O}_X\), so taking \(G\)-invariant part, we get a map 
	\begin{equation*}
		dh(\phi) : \mc{O}_X  \rightarrow  I_{\mathbb{K}} \otimes \mc{O}_X,
	\end{equation*}
	which fits into the following commutative diagram:
	
	\begin{center}
		\begin{figure}[h] 
			\begin{tikzpicture}
				\matrix (m) [matrix of math nodes,row sep=3em,column sep=4em,minimum width=2em] {\mc{O}_X  & I_{\mathbb{K}} \otimes \mc{O}_X \\
					p_*\mc{O_P} & 	I_{\mathbb{K}} \otimes  p_*\mc{O_P} \\
				};
				\path[-stealth]
				(m-1-1) edge node [left] {$p^{\sharp}$} (m-2-1)
				edge  node [above] {$dh(\phi)$} (m-1-2)
				(m-2-1) edge node [below] {$p_* \phi$} (m-2-2)
				(m-1-2) edge node [right] {$p^{\sharp}$} (m-2-2);
			\end{tikzpicture}	
			\caption{}
			\label{lie}
		\end{figure}
	\end{center} 
	This implies that 
	\begin{equation}\label{key}
		\phi(p^{\sharp}(f))=(p^{\sharp}(dh(\phi))(f)
	\end{equation}
	for \(f \in \Gamma (U, \mc{O}_X)\).
	Next, consider the isomorphism of groups (see \cite[Lemma 3.4]{MO}) 
	\begin{equation*}
		a_X	:\text{Lie}(\text{Aut}(X)) \rar 	H^0(X, \mathscr{T}_X) \text{ given by } \phi \mapsto \delta_{\phi}, 
	\end{equation*}
where \(\delta_{\phi}\) is determined by the rule 
\begin{equation*}\label{1.1}
	\phi(f)= f + \varepsilon \, \delta_{\phi}(f) \in \Gamma (U, \mc{O}_X + \varepsilon \, \mc{O}_X)= \Gamma(U, \text{I}_{\mathbb{K}} \otimes \mc{O}_X)
\end{equation*}
	for \(f \in \Gamma(U, \mc{O}_X)\), \(U\) open subset of \(X\). Similarly, we have an isomorphism 
	\begin{equation*}
		a_{\mathcal{P}}	:\text{Lie}(\text{Aut}^G(\mathcal{P})) \rar 	H^0(X, \mathcal{A}t(\mathcal{P})) \text{ given by } \phi \mapsto \delta_{\phi} 
	\end{equation*}
	where \(\delta_{\phi}\) is determined by the rule 
	\begin{equation}\label{con1.1}
		\phi(f)= f + \varepsilon \, \delta_{\phi}(f) \in \Gamma (U, \mc{O_P} + \varepsilon \, \mc{O_P})= \Gamma(U, \text{I}_{\mathbb{K}} \otimes \mc{O_P})
	\end{equation}
	for \(f \in \Gamma(U, \mc{O_P})\), \(U\) open subset of \(\mathcal{P}\).

	Recall the following proposition on the existence of an equivariant structure on a principal bundle whose proof follows verbatim from the proof of \cite[Proposition 4.1]{BDP}.
	\begin{prop}\cite[Proposition 4.1]{BDP}\label{condn_equ}
		Let \(G\) be reductive. A principal \(G\)-bundle $\mathcal{P}$ on a projective toric variety \(X\) can be endowed with an equivariant structure if 
		\begin{equation*}
			\mathcal{P} \, \cong \, \varphi_t^* \mathcal{P}, \text{ for all } t \in T,
		\end{equation*}
		where \(\varphi_t: X \rightarrow X\) is given by sending \(x \in X \) to \(tx\). In other words, \(\mathcal{P}\) is \(T\)-equivariant if 
		\begin{equation*}
			T \subseteq \text{Im}(h).
		\end{equation*}
		
	\end{prop}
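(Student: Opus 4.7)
The plan is to use the hypothesis $T \subseteq \text{Im}(h)$ to construct an algebraic group homomorphism $\rho : T \to \text{Aut}^G(\mathcal{P})$ satisfying $h \circ \rho = \text{Id}_T$; such a $\rho$ is precisely the data of a $T$-equivariant structure on $\mathcal{P}$ compatible with the $T$-action on $X$ and commuting with the $G$-action.

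First, I would form the closed algebraic subgroup $H := h^{-1}(T) \subseteq \text{Aut}^G(\mathcal{P})$. The hypothesis $T \subseteq \text{Im}(h)$ says precisely that $h|_H : H \to T$ is surjective. Since $T$ is connected and $H^{\circ}$ has finite index in $H$, this surjectivity descends to the connected component, yielding a short exact sequence of connected linear algebraic groups in characteristic zero
\[
1 \longrightarrow (\text{Aut}^G_X(\mathcal{P}))^{\circ} \longrightarrow H^{\circ} \stackrel{h}{\longrightarrow} T \longrightarrow 1,
\]
where the kernel $K := (\text{Aut}^G_X(\mathcal{P}))^{\circ}$ is connected by definition.

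The heart of the proof is to split this sequence. By Mostow's theorem (valid in characteristic zero), $H^{\circ}$ admits a Levi decomposition $H^{\circ} = R_u(H^{\circ}) \rtimes L$ with $L$ reductive. Since $T$ contains no non-trivial unipotent elements, $h$ kills $R_u(H^{\circ})$, so $h|_L : L \to T$ remains surjective. Next I would choose a maximal torus $S_K$ of $K \cap L$ and extend it to a maximal torus $S$ of $L$. By Borel's theorem that maximal tori map onto maximal tori under surjective morphisms of connected affine algebraic groups, $h(S)$ is a maximal torus of $T$, hence equals $T$. The connected component $(S \cap K)^{\circ}$ is a subtorus of $S$ lying inside $K$ and containing $S_K$, so by the maximality of $S_K$ in $K$ one obtains $(S \cap K)^{\circ} = S_K$. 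Thus $\ker(h|_S) = S_K$ is connected, which implies that the induced map on cocharacter lattices $X_*(S) \to X_*(T)$ is surjective. Since $X_*(T)$ is free abelian, this surjection splits, producing a direct summand $M \subseteq X_*(S)$ with $M \xrightarrow{\sim} X_*(T)$. The subtorus $\widetilde{T} \subseteq S$ corresponding to $M$ then maps isomorphically onto $T$ via $h$.

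Finally, composing the inverse isomorphism $T \xrightarrow{\sim} \widetilde{T}$ with the inclusion $\widetilde{T} \hookrightarrow H^{\circ} \subseteq \text{Aut}^G(\mathcal{P})$ yields the required section $\rho$, and hence the $T$-equivariant structure on $\mathcal{P}$. The main obstacle in the plan is the structural analysis of $H^{\circ}$ needed for the splitting: one must combine Mostow's Levi decomposition (to discard the unipotent radical), Borel's theorem (to control the image of a maximal torus under $h$), and the careful choice of $S_K \subseteq K$ (to force $\ker(h|_S)$ to be connected), in order to reduce the problem to the purely lattice-theoretic splitting of a surjection of free abelian groups.
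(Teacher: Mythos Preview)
Your strategy---restrict to $H=h^{-1}(T)$, pass to a Levi factor $L$ of $H^\circ$, and locate a subtorus of $L$ projecting isomorphically onto $T$---is the right one and matches the argument of \cite[Proposition~4.1]{BDP}, to which the paper defers. But there is a genuine gap at the key step. From $(S\cap K)^\circ=S_K$ you conclude that $\ker(h|_S)=S_K$ is connected; however $\ker(h|_S)=S\cap K$, and you have only computed its identity component. Nothing you wrote rules out extra finite components, and for a bare surjection of tori such as $\mathbb{G}_m\to\mathbb{G}_m$, $z\mapsto z^2$, the kernel \emph{is} disconnected, so this step cannot be purely formal and your lattice-splitting argument does not yet apply.

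The repair hinges on the connectedness of the kernel as a normal subgroup. Granting that $K\cap L$ is connected, any element of $S\cap K=S\cap(K\cap L)$ is semisimple (it lies in the torus $S$) and centralizes the maximal torus $S_K$ of $K\cap L$, hence lies in the Cartan subgroup $C_{K\cap L}(S_K)$. Since Cartan subgroups of connected groups are connected and nilpotent, in characteristic zero $C_{K\cap L}(S_K)=S_K\times U$ with $U$ unipotent, and its semisimple elements are exactly those of $S_K$; this forces $S\cap K=S_K$ as needed. Relatedly, your identification of $\ker(h|_{H^\circ})$ with $(\text{Aut}^G_X(\mathcal{P}))^\circ$, rather than with the a priori larger group $H^\circ\cap\text{Aut}^G_X(\mathcal{P})$, also needs justification---the paper writes the same thing in \eqref{autfn3}, and it is precisely the connectedness of this kernel that drives the Cartan-subgroup argument above.
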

	
	\begin{thm}\label{logconn2equi}
		Let \(X\) be a projective toric variety. Let \(\mathcal{P}\) be a $T$-equivariant algebraic principal $G$-bundle over $X$, where \(G\) is reductive. Then the following statements are equivalent:
		\begin{enumerate}[$(i)$]
			\item The principal \(G\)-bundle $\mathcal{P}$ admits an equivariant structure.
			
			\item The principal \(G\)-bundle $\mathcal{P}$ admits an integrable logarithmic connection singular over \(D\).
			
			\item The principal \(G\)-bundle $\mathcal{P}$ admits a logarithmic connection singular over \(D\).
		\end{enumerate}
	\end{thm}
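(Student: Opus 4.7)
The plan is to establish the cycle of implications $(i) \Rightarrow (ii) \Rightarrow (iii) \Rightarrow (i)$. The implication $(i) \Rightarrow (ii)$ is exactly the content of Proposition \ref{prop1}, which constructs an integrable logarithmic connection out of the equivariant structure. The implication $(ii) \Rightarrow (iii)$ is tautological. Thus the substantive content is the implication $(iii) \Rightarrow (i)$.

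For $(iii) \Rightarrow (i)$, I would invoke Proposition \ref{condn_equ}, which reduces the problem to showing $T \subseteq \text{Im}(h)$, where $h: \text{Aut}^G(\mathcal{P}) \rightarrow \text{Aut}(X)$ is the map from \eqref{map_aut}. Since $T$ is connected and $\text{Im}(h)$ is a closed algebraic subgroup of $\text{Aut}(X)$, it suffices to verify the inclusion at the level of Lie algebras, namely $\mathfrak{t} \subseteq dh(\text{Lie}(\text{Aut}^G(\mathcal{P})))$. Under the isomorphisms $a_X$ and $a_{\mathcal{P}}$, the Lie algebras $\text{Lie}(\text{Aut}(X))$ and $\text{Lie}(\text{Aut}^G(\mathcal{P}))$ are identified with $H^0(X, \mathscr{T}_X)$ and $H^0(X, \mathcal{A}t(\mathcal{P}))$ respectively. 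Using the commutative diagram in Figure \ref{lie} together with \eqref{key} and the defining formula $\eta_U(\tilde{\delta})(f) = p_*(\tilde{\delta}(p^\sharp(f)))$, I would verify that under these identifications, $dh$ corresponds precisely to the map on global sections induced by $\eta: \mathcal{A}t(\mathcal{P}) \rightarrow \mathscr{T}_X$.

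Once this identification is in place, the argument is short. Given a logarithmic connection $\zeta: \mathscr{T}_X(-\text{log } D) \rightarrow \mathcal{A}t(\mathcal{P})(-\text{log } D)$, for each $v \in N$ the derivation $\delta_v$ defines a global section of $\mathscr{T}_X(-\text{log } D)$ via the inclusion $\mathfrak{t} \hookrightarrow H^0(X, \mathscr{T}_X(-\text{log }D))$ coming from Proposition \ref{odat}. Then $\zeta(\delta_v) \in H^0(X, \mathcal{A}t(\mathcal{P})(-\text{log } D)) \subseteq H^0(X, \mathcal{A}t(\mathcal{P}))$ is a global section whose image under $\eta$ is $\delta_v$, by the defining splitting property $\bar{\eta} \circ \zeta = \text{id}$. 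Letting $v$ range over a basis of $N \otimes_{\mathbb{Z}} \mathbb{K} \cong \mathfrak{t}$ yields $\mathfrak{t} \subseteq \text{Im}(dh)$, and hence $T \subseteq \text{Im}(h)$, completing the proof.

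The main obstacle is the precise identification of $dh$ with the map induced by $\eta$ on global sections. This requires a careful chase through Figure \ref{lie}, matching the functorial description of $dh$ coming from the categorical quotient structure of $p$ with the derivation-theoretic definition of $\eta$, and tracking the parameter $\varepsilon$ in \eqref{con1.1} through the commutative square. After this verification, the splitting property of any logarithmic connection produces the required Lie-algebra-level lift of $\mathfrak{t}$ essentially for free.
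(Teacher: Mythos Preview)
Your proposal is correct and follows essentially the same route as the paper: both establish the cycle $(i)\Rightarrow(ii)\Rightarrow(iii)\Rightarrow(i)$, with the substantive step $(iii)\Rightarrow(i)$ handled by verifying that $dh$ corresponds to $\eta$ on global sections (the commutativity of Figure~\ref{scfig}, which you propose to extract from Figure~\ref{lie} and \eqref{key}), then using the splitting $\zeta$ to lift $\mathfrak{t}$ into $\text{Lie}(\text{Aut}^G(\mathcal{P}))$ and invoking Proposition~\ref{condn_equ}. The paper in fact has $\mathfrak{t}=H^0(X,\mathscr{T}_X(-\text{log }D))$ rather than merely an inclusion, but this does not affect your argument.
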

	
	\begin{proof}
		Using Proposition \ref{prop1}, \((i)\) implies \((ii)\). Clearly, \((ii)\) implies \((iii)\). We now show that \((iii)\) implies \((i)\).
		
		Let 
		\begin{equation*}
			\zeta : \mathscr{T}_X(- \text{log } D) \rightarrow \mathcal{A}t(\mathcal{P}) (- \text{log } D) 
		\end{equation*}
		be a logarithmic connection singular over \(D\) i.e., \(\bar{\eta} \circ \zeta = Id_{\mathscr{T}_X(- \text{log } D)}\).

		Note that we have the following commutative diagram:
		
		\begin{center}
			\begin{figure}[h] 
				\begin{tikzpicture}
					\matrix (m) [matrix of math nodes,row sep=3em,column sep=4em,minimum width=2em] {
						\text{Lie}(\text{Aut}^G(\mathcal{P})) & 	\text{Lie}(\text{Aut}(X)) \\
						H^0(X, \mathcal{A}t(\mathcal{P})) & H^0(X,\mathscr{T}_X) \\};
					\path[-stealth]
					(m-1-1) edge node [left] {$a_{\mathcal{P}}$} (m-2-1)
					edge  node [above] {$dh$} (m-1-2)
					(m-2-1) edge node [below] {$\eta(X)$} (m-2-2)
					(m-1-2) edge node [right] {$a_X$} (m-2-2);
				\end{tikzpicture}	
				\caption{}
				\label{scfig}
			\end{figure}
		\end{center}
		To see this consider $\phi \in \text{Lie}(\text{Aut}^G(\mathcal{P}))$. Now \(a_X(dh(\phi))=\delta_{dh(\phi)}\) is defined by the rule 
		\begin{equation*}
			dh(\phi)(f)=f + \varepsilon \, \delta_{dh(\phi)}(f)
		\end{equation*}
		for \(f \in \Gamma(U, \mc{O}_X)\), \(U\) open subset of \(X\). Hence, we have 
		\begin{equation*}
			p^{\sharp}(dh(\phi)(f))=p^{\sharp}(f) + \varepsilon \, p^{\sharp}(\delta_{dh(\phi)}(f)).
		\end{equation*}
		Thus from \eqref{key}, we have
		\begin{equation*}
			\phi(p^{\sharp}(f))=p^{\sharp}(f) + \varepsilon \,  p^{\sharp}(\delta_{dh(\phi)}(f)).
		\end{equation*}
		By \eqref{con1.1}, this shows that,
		\begin{equation}\label{der1}
			\delta_{\phi}(p^{\sharp}(f))=p^{\sharp}(\delta_{dh(\phi)}(f))=\delta_{dh(\phi)}(f) \circ p.
		\end{equation}
		Again, since \(p: \mathcal{P} \rar X\) is a good quotient, it follows that there exists \(p_*(\delta_{\phi}(p^{\sharp}(f))) \in \Gamma(U, \mc{O}_X)\) such that
		\begin{equation}\label{der2}
			\delta_{\phi}(p^{\sharp}(f))= p_*(\delta_{\phi}(p^{\sharp}(f))) \circ p.
		\end{equation}
		Since \(p\) is surjective, from \eqref{der1} and \eqref{der2}, we have
		\begin{equation}\label{dereq}
			\delta_{dh(\phi)}(f)=p_*(\delta_{\phi}(p^{\sharp}(f))).
		\end{equation}
		
		Then for $f \in 	\Gamma(U, \mathcal{O}_X),  \, U \subset X$, from \eqref{dereq}, we have
		\begin{equation*}
			\begin{split}
				\eta(X)(a_{\mathcal{P}}(\phi))(f)=\eta(X)(\delta_{\phi}) (f)&= p_*(\delta_{\phi}(p^{\sharp}(f))) \\
				&=\delta_{dh(\phi)}(f)\\
				&=a_X(dh(\phi))(f).
			\end{split}
		\end{equation*}

		\noindent
		This shows that the diagram in Figure \ref{scfig} commutes.
		
		Thus we have an injection 
		\begin{equation*}
			\theta : \mathfrak{t}=H^0(X,\mathscr{T}_X(- \text{log }D)) \rar \text{Lie}(\text{Aut}^G(\mathcal{P})), 
		\end{equation*} given by the composition
		\begin{equation*}
			H^0(X,\mathscr{T}_X(- \text{log }D)) \stackrel{\zeta(X)} \longrightarrow H^0(X, \mathcal{A}t(\mathcal{P}) (- \text{log } D) ) \hookrightarrow H^0(X, \mathcal{A}t(\mathcal{P}))  \stackrel{{a_{\mathcal{P}}^{-1}}} \longrightarrow \text{Lie}(\text{Aut}^G(\mathcal{P})).
		\end{equation*}
		From the commutative diagram Figure \ref{scfig}, it follows that 
		\begin{equation*}
			a_X \circ dh \circ \theta =\text{id}_{\mathfrak{t}}.
		\end{equation*}
		Since $\theta$ is an injection and \(a_X\) is an isomorphism, we have \(\mathfrak{t} \subset \text{Im}(dh).\) This implies that 
		\begin{equation*}
			T \subseteq \text{Im}(h),
		\end{equation*}
		as \(T\) is connected (see \cite[Proposition 24.5.3]{TY}). Hence, by Proposition \ref{condn_equ}, $\mathcal{P}$ admits an equivariant structure.
		
	\end{proof}

	\subsection{Residue of logarithmic connections on vector bundles over a toric variety}\label{ssc: residue}
	In this section, we give an explicit description of residues of logarithmic connection on a vector bundle over a projective toric variety.
	Let \(\pi : E \rightarrow X\) be a vector bundle of rank \(r\) on a toric variety \(X\). Let 
	\begin{equation}\label{conn}
		\nabla : E \longrightarrow E \otimes_{\mathcal{O}_X} \Omega^1_X(\text{log } D)
	\end{equation}
	be a \(\mathbb{K}\)-linear sheaf homomorphism satisfying the Leibniz rule 
	\begin{equation*}
		\nabla(f \, s )=f \, \nabla(s) + s \otimes df,
	\end{equation*}
	for any open subset \(U\) of \(X\) and all sections \(f \in \mathcal{O}_X(U) \) and \(s \in \Gamma(U, E)\). For each $\rho \in \Delta(1)$, we have the natural inclusion,
	\begin{equation*}
		i_{\rho}:D_{\rho} \rar X
	\end{equation*}
	which gives the natural surjective map 
	\begin{equation}\label{res}
		\mathcal{O}_X \rar \mathcal{O}_{D_{\rho}}.
	\end{equation}
	We have the natural map 
	\begin{equation*}
		e_{\rho}:	M \ \rar \ \Z \ \ \text{given by } m \mapsto \langle m, v_{\rho} \rangle.
	\end{equation*} Thus we have a map of \(\mathcal{O}_X\)-modules given by 
	\begin{equation*}
		\Omega_X^1(\text{log } D)\cong	M \otimes_{\Z} \mathcal{O}_X \ \stackrel{e_{\rho} \otimes id}{\longrightarrow} \ \Z \otimes_{\Z} \mathcal{O}_X \cong \mathcal{O}_X.
	\end{equation*}
	Composing this with \eqref{res}, we get a map of \(\mathcal{O}_X\)-modules
	\begin{equation*}
		\Omega_X^1(\text{log } D) \rar \mathcal{O}_{D_{\rho}}.
	\end{equation*}
	Tensoring this with the locally free sheaf \(E\), we have a map
	\begin{equation*}
		E \otimes_{\mathcal{O}_X}\Omega_X^1(\text{log } D) \rar E \otimes_{\mathcal{O}_X} \mathcal{O}_{D_{\rho}}.
	\end{equation*}

Composing  with \eqref{conn}, we have the following map of \(\mathcal{O}_X\)-modules
\begin{equation*}
E \stackrel{\nabla} \rightarrow	E \otimes_{\mathcal{O}_X}\Omega_X^1(\text{log } D) \rar E \otimes_{\mathcal{O}_X} \mathcal{O}_{D_{\rho}}.
\end{equation*}

Finally restricting to \(D_{\rho}\), we have the following map of \(\mathcal{O}_{D_{\rho}}\)-modules

	\begin{equation}\label{defn_res} 
		E|_{D{\rho}}=	E \otimes_{\mathcal{O}_X} \mathcal{O}_{D_{\rho}} \, {\longrightarrow} \, E \otimes_{\mathcal{O}_X} \mathcal{O}_{D_{\rho}}= E|_{D{\rho}}.
	\end{equation} 
	Thus we define the residue of \(\nabla\) along \(D_{\rho}\) as the section \(\Gamma_{D{\rho}} \in H^0(D_{\rho}, End(E|_{D{\rho}}))\) coming from \eqref{defn_res}.

	
	By Theorem \ref{logconn2equi} and Corollary \ref{equivalence}/ Proposition \eqref{equivalence1}, we have an equivariant structure on the vector bundle \(E\). Recall that, for each \(\sigma \in \Sigma\), we have the distinguished point \(x_{\sigma}\) in the affine toric variety \(U_{\sigma}\). Let \(E(x_{\sigma})\) be the fiber of the vector bundle \(E\) over the point \(x_{\sigma}\). Since the vector bundle \(E\) is \(T\)-equivariant, we have an action of \(T_{\sigma}\) on \(E(x_{\sigma})\). Now consider the evaluation map,
	\begin{equation*}
		ev_{x_{\sigma}}: \Gamma(U_{\sigma}, E) \, \rar \, 	E(x_{\sigma}),
	\end{equation*}
	given by evaluating the sections at the distinguished point \(x_{\sigma}\). There is a natural action of \(T\) on the space of sections \(\Gamma(U_{\sigma}, E)\), 
	\begin{equation*}
		\begin{split}
			T \times \Gamma(U_{\sigma}, E) \ &\rar \ \Gamma(U_{\sigma}, E)\\
			(t, s) & \mapsto \ t \cdot s
		\end{split}
	\end{equation*} defined by 
	\begin{equation*}
		(t \cdot s)(x)=t \cdot s(t^{-1} \cdot x)
	\end{equation*}
	for all \(t \in T, \, s \in \Gamma(U_{\sigma}, E), \, x \in U_{\sigma}\). Following the proof of \cite[Proposition 2.1.1]{kly}, let $V$ be a maximal $T$-stable subspace of \(\Gamma(U_{\sigma}, E)\) on which \(ev_{x_{\sigma}}\) is an isomorphism (existence of non-empty $V$ follows from the complete reducibility of the torus).  Again, by complete reducibility, we have an isotypical decomposition \begin{equation*}
		V=\oplus_{m \in M} V_m.
	\end{equation*}
	Thus we can choose a basis \(s_1, \, \ldots \, s_r\) of \(V\) consisting of eigenvectors so that \(ev_{x_{\sigma}}(s_1), \, \ldots \, ev_{x_{\sigma}}(s_r)\) forms a basis of \(E(x_{\sigma})\). Suppose that the weight of the section \(s_i\) is \(m_i\) for all \(i=1, \, \ldots, \, r\), i.e.,
	\begin{equation*}
		(t \cdot s_i)(x)= \chi^{m_i}(t) s_i( x)
	\end{equation*}
	for all \(t \in T\), \(x \in U_{\sigma}\) and some \(m_i \in M\). Following \cite{BMS}, define new sections \(\tilde{s}_1, \, \ldots, \, \tilde{s}_r\) over the open orbit \(O\) as
	\begin{equation}\label{invariant}
		\tilde{s}_i(t \cdot x_0)=\chi^{m_i}(t)s_i(t \cdot x_0)
	\end{equation}
	for \(t \in T\). Then we see that 
	\begin{equation*}
		\tilde{s}_i( x_0)=s_i(x_0)
	\end{equation*}
	and moreover $\tilde{s}_i$ is a \(T\)-invariant section over the open orbit. To see this, let \(t \in T\) and \(x \in  O\), then writing \(x=t' \cdot x_0\) we have
	\begin{equation*}
		\begin{split}
			(t \, \tilde{s}_i)(t' \cdot x_0)= & t \,  \tilde{s}_i(t^{-1}t'x_0)=t \,  \chi^{m_i}(t^{-1}t') \, s_i(t^{-1}t'x_0)\\
			= & \chi^{m_i}(t^{-1}t') \, t \, s_i(t^{-1} x)=\chi^{m_i}(t^{-1}t') \, (t \cdot s_i)(x)\\
			= & \chi^{m_i}(t^{-1}t') \, \chi^{m_i}(t) \, s_{i}(x)= \chi^{m_i}(t') \, s_{i}(t' x_0)\\
			= & \tilde{s}_i(t' x_0)=\tilde{s}_i(x).
		\end{split}
	\end{equation*}
	
	Hence, 
	\(\nabla(\tilde{s}_i)=0\) (see Remark \ref{vanishing}). By \eqref{invariant} and the Leibniz rule, we have
	\begin{equation*}
		\nabla(s_i)=-s_i \otimes \frac{\text{d} \chi^{m_i}}{\chi^{m_i}}
	\end{equation*}
	over the open orbit. By continuity, 
	\begin{equation*}
		\nabla(s_i)=-s_i \otimes \frac{\text{d} \chi^{m_i}}{\chi^{m_i}}.
	\end{equation*}
	on \(U_{\sigma}\). Note that, we have the identification
	\begin{equation*}
		\Omega_X^1(\text{log } D)(U_{\sigma}) \, \rar \, M \otimes \mathbb{K}[S_{\sigma}] \text{ given by } \frac{\text{d}\chi^{m_i}}{\chi^{m_i}} \mapsto m_i \otimes 1.
	\end{equation*} Fix \(\rho \in \sigma(1)\). Restricting to \(D_{\rho}\), we have 
	\begin{equation*}
		\nabla|_{D_{\rho}}(s_i)=-s_i \otimes ( m_i \otimes 1).
	\end{equation*} Then by definition of residue \eqref{defn_res}, over \(U_{\sigma} \cap D_{\rho}\) we obtain
	\begin{equation*}
		\Gamma_{D_{\rho}}(s_i)=- \langle m_i, \, v_{\rho} \rangle \, s_i.
	\end{equation*}
	Thus, when \(\sigma\) is a full dimensional cone in \(N_{\R}\), we can recover the collection \(\{m_i\}\). Hence, using \cite[Corollary 2.3]{payne}, the action of \(T\) on \(E|_{U_{\sigma}}\) can be recovered from the collection of residues \(\{	\Gamma_{D_{\rho}} ~:~ \rho \in \sigma(1) \}\) . Also, note that the action of \(T\) determines the equivariant Chern class of the bundle \(E\) using \cite[Proposition 3.1]{payne}.
	
	Now, we describe how to relate the residues with the equivariant Chern class. First note that the matrix representation of \(\Gamma_{D_{\rho}}|_{U_{\sigma}\cap D_{\rho}}\) is the diagonal matrix \(\text{diag}(-\langle m_1, v_{\rho} \rangle, \ldots, -\langle m_r, v_{\rho} \rangle)\) with respect to the ordered basis \(s_1, \ldots, s_r\) of \(V\). Let \(f_{\rho}\) denote the characteristic polynomial of \(-\Gamma_{D_{\rho}}|_{U_{\sigma}\cap D_{\rho}}\). Then the coefficients of the polynomial \((-1)^r f_{\rho}\) are elementary symmetric polynomials in \(\langle m_i, v_{\rho} \rangle \) for \(i=1, \ldots, r \). 
	
	On the other hand, associated with the equivariant structure on the vector bundle, there is a collection of a multiset of linear functions, indexed by the cones in the fan, defined as follows (see \cite[Section 2.2]{payne}). From the isotypical decomposition
	\[V= \oplus \, V^{\sigma}_{m_i},\]
	we have the multiset \({\bf u}(\sigma)=\{m_1, \ldots, m_r\}\). Write \(\Psi_E=\{{\bf u}(\sigma)\}_{\sigma \in \Sigma}\) for the collection of multisets for all cones \(\sigma\) in the fan \(\Sigma\). Let \[c_i(\Psi_{E}): | \Sigma| \rar \R\] be the piecewise polynomial function whose restriction to \(\sigma\) is the \(i\)-th
	elementary symmetric function in the multiset of linear functions \({\bf u}(\sigma)\). Define
	\[c(\Psi_E)= 1 + c_1(\Psi_E) + \cdots + c_r(\Psi_E).\]
	Then by \cite[Proposition 3.1]{payne}, the equivariant total Chern class of \(E\) can be identified with \(c(\Psi_E)\). Observe that \[c_i(\Psi_{E})(v_{\rho})= i \text{-th coefficient of }(-1)^r f_{\rho}.\]
	
	This exhibits a relation between the residues and the equivariant total Chern class of the vector bundle.

	

\begin{thebibliography}{ZZZZ}
		\bibitem[At]{At} Atiyah, M. F.:  {\it Complex analytic connections in fibre bundles}, Trans. Amer. Math. Soc., 85 (1957), 181–207.
		
		
		
		\bibitem[BDP]{BDP} Biswas, Indranil; Dey, Arijit; Poddar, Mainak: {\it Equivariant principal bundles and logarithmic connections on toric varieties}, Pacific J. Math. 280 (2016), no. 2, 315–325.
		
		

\bibitem[BDPS]{BDPS} Biswas, Indranil; Dan, Ananyo; Paul, Arjun; Saha, Arideep: {\it Logarithmic connections on principal bundles over a Riemann surface}, Internat. J. Math. 28 (2017), no. 12, 1750088, 18 pp.

\bibitem[BKN]{BKN} Biswas, Indranil; Senthamarai Kannan, S; Nagaraj, D. S.: 
{ \it Equivariant principal bundles for G-actions and G-connections}, Complex Manifolds 2 (2015), no. 1, 178–185.
  
		\bibitem[BMS]{BMS} Biswas, Indranil; Muñoz, Vicente; Sánchez: {\it Equivariant vector bundles and logarithmic connections on toric varieties}, J. Algebra 384 (2013), 227--241.
		
		\bibitem[BP]{BP}  Biswas, Indranil; Paul, Arjun: {\it Equivariant bundles and connections}, Ann. Global Anal. Geom. 51 (2017), no. 4, 347--358. 
		
		\bibitem[Br]{Br} Brion, Michel: {\it On automorphism groups of fiber bundles}, Publ. Mat. Urug., 12 (2011), 39–66.
		
		
		
		
		\bibitem[Bu]{Bu} D. Bühler: {\it Homogener Koordinatenring und Automorphismengruppe vollständiger torischer Varietäten}, Diplomarbeit, Basel, 1996.
		
		\bibitem[BG]{BG} Bruns, Winfried; Gubeladze, Joseph: {\it Polytopal linear groups}, J. Algebra 218 (1999), no. 2, 715–737.

\bibitem[Mau]{Mau} Cailotto, Maurizio: {\it Algebraic connections on logarithmic schemes}, C. R. Acad. Sci. Paris Sér. I Math. 333 (2001), no. 12, 1089–1094.
  
		\bibitem[CLS]{Cox}
		Cox, David A.; Little, John B.; Schenck, Henry K.: {\it Toric varieties}, Graduate Studies in Mathematics, 124, American Mathematical Society, Providence, RI, 2011. 
		
		\bibitem[De]{deligne} Deligne, Pierre: {\it Équations différentielles à points singuliers réguliers}, 	Lecture Notes in Mathematics, Vol. 163. Springer-Verlag, Berlin-New York, 1970. iii+133 pp.
		
		\bibitem[De2]{del2} Deligne, Pierre: {\it Théorie de Hodge. II.}, Inst. Hautes Études Sci. Publ. Math. No. 40 (1971), 5–58.
		
		\bibitem[DN]{DNarasimhan} Drezet, J.-M.; Narasimhan,: {\it Groupe de Picard des variétés de modules de fibrés semi-stables sur les courbes algébriques}, Invent. Math. 97 (1989), no. 1, 53–94.
		
		\bibitem[EV1]{EV1}  Esnault, Hélène; Viehweg, Eckart: {\it Logarithmic de Rham  es and vanishing theorems}, Invent. Math. 86 (1986), no. 1, 161--194.
		
		\bibitem[EV2]{EV2}  Esnault, Hélène; Viehweg, Eckart: {\it Lectures on vanishing theorems}, DMV Seminar, 20. Birkhäuser Verlag, Basel, 1992. vi+164 pp.

  \bibitem[EGA-I]{ega1} Grothendieck, A.; Dieudonné, J. A.:
{\it Éléments de géométrie algébrique. I.} Grundlehren der mathematischen Wissenschaften [Fundamental Principles of Mathematical Sciences], 166. Springer-Verlag, Berlin, 1971. ix+466 pp.
		
		\bibitem[Gue]{gue} Guenancia, Henri: {\it Semistability of the tangent sheaf of singular varieties}, Algebr. Geom. 3 (2016), no. 5, 508–542.
		
		\bibitem[GP]{GP} Gurjar, Sudarshan; Paul, Arjun:
		{\it Criterion for existence of a logarithmic connection on a principal bundle over a smooth complex projective variety},
		Ann. Global Anal. Geom., 58 (2020), no. 3, 241–251.
		
		
		\bibitem[Ha1]{stabRHa} Hartshorne, Robin: {\it Stable reflexive sheaves}, Math. Ann. 254 (1980), no. 2, 121–176.
		
		\bibitem[Ha2]{AGHa}	Hartshorne, Robin: {\it Algebraic geometry}, Graduate Texts in Mathematics, No. 52. Springer-Verlag, New York-Heidelberg, 1977. xvi+496 pp.

\bibitem[Illusie]{Illusie} Illusie, Luc: {\it Logarithmic spaces (according to K. Kato)}, Barsotti Symposium in Algebraic Geometry (Abano Terme, 1991), 183–203, Perspect. Math., 15, Academic Press, San Diego, CA, 1994.

\bibitem[Kato]{K.Kato} Kato, Kazuya: {\it Logarithmic structures of Fontaine-Illusie}, Algebraic analysis, geometry, and number theory (Baltimore, MD, 1988), 191–224, Johns Hopkins Univ. Press, Baltimore, MD, 1989.

  
		\bibitem[Kly]{kly} Klyachko, A. A.: {\it Equivariant bundles over toric varieties. (Russian)}, Izv. Akad. Nauk SSSR Ser. Mat. 53 (1989), no. 5, 1001–1039, 1135; translation in Math. USSR-Izv. 35 (1990), no. 2, 337–375.
		
		\bibitem[MO]{MO} Matsumura, Hideyuki; Oort, Frans: 
		{\it Representability of group functors, and automorphisms of algebraic schemes},
		Invent. Math., 4 (1967), 1–25.
		
		\bibitem[Milne]{Milneetco}  Milne, James S.: {\it Étale cohomology}, Princeton Mathematical Series, No. 33. Princeton University Press, Princeton, N.J., 1980. xiii+323 pp. 
		
		\bibitem[Nev]{Nevins} Nevins, Thomas: 	{\it Descent of coherent sheaves and complexes to geometric invariant theory quotients}, 	J. Algebra 320 (2008), no. 6, 2481–2495.
		
		\bibitem[Oda]{Oda} Oda, Tadao: {\it Convex bodies and algebraic geometry. An introduction to the
			theory of toric varieties}, Ergebnisse der Mathematik
		und ihrer Grenzgebiete (3), 15. Springer-Verlag, Berlin, 1988.
		
		\bibitem[Pay]{payne}  Payne, Sam: {\it Moduli of toric vector bundles}, Compos. Math. 144 (2008), no. 5, 1199–1213.
		
		\bibitem[Per]{PerT} Markus Perling: {\it Resolutions and moduli for equivariant sheaves on toric varieties}, { PhD Dessertion University of Kaiserslautern}, 2003.
		
		\bibitem[Sa]{saito} Saito, Kyoji: {\it Theory of logarithmic differential forms and logarithmic vector fields}, J. Fac. Sci. Univ. Tokyo Sci. IA 27 (1980), 265–291.

\bibitem[Spr]{agSpringer} T. A. Springer: {\it Linear algebraic groups}, Reprint of the 1998 second edition, Modern Birkhäuser Classics, Birkhäuser Boston, Inc., Boston, MA, 2009.
  
		\bibitem[Tag06B6]{SP1} \href{https://stacks.math.columbia.edu/tag/06B6}{The Stacks project Lemma 06B6}.

  \bibitem[Tag01UM]{SP2}
\href{https://stacks.math.columbia.edu/tag/01UM}{The Stacks project Lemma 29.32.7}
  
		
		
		
		\bibitem[TY]{TY} Tauvel, Patrice; Yu, Rupert W. T.: {\it Lie Algebras and Algebraic Groups}, Springer-Verlag Berlin Heidelberg 2005.
		
		
		
	\end{thebibliography}
\end{document}